\begin{document}

\theoremstyle{plain} \numberwithin{equation}{section}

\newtheorem{thm}{Theorem}[section]
\newtheorem{prop}[thm]{Proposition}
\newtheorem{cor}[thm]{Corollary}
\newtheorem{lemma}[thm]{Lemma}
\newtheorem{lemdef}[thm]{Lemma--Definition}
\newtheorem{ques}[thm]{Question}
\newtheorem{claim}[thm]{Claim}

\theoremstyle{definition}

\newtheorem{defn}[thm]{Definition}
\newtheorem{ex}[thm]{Example}
\newtheorem{notn}[thm]{Notation} 
\newtheorem{note}[thm]{Note}
\newtheorem{obs}[thm]{Observation}
\newtheorem{rmk}[thm]{Remark}
\newtheorem{Question}[thm]{Question}
\newtheorem{review}[thm]{}
\newtheorem{Empty}[thm]{}
\newcommand{\bi}{\begin{itemize}}
\newcommand{\ei}{\end{itemize}}
\newcommand{\bp}{\begin{proof}}
\newcommand{\ep}{\end{proof}}

\def\Trick#1{\begin{Empty}\bf#1\end{Empty}}

\def\AA{\mathbb{A}}
\def\CC{\mathbb{C}}
\def\FF{\mathbb{F}}
\def\GGG{\mathbb{G}}
\def\PP{\mathbb{P}}
\def\cPP{\check{\mathbb{P}}}

\def\QQ{\mathbb{Q}}
\def\RR{\mathbb{R}}
\def\VVV{\mathbb{V}}
\def\ZZ{\mathbb{Z}}

\def\m{\mathfrak{m}}

\def\cL{\Cal L}

\def\ov{\overline}

\def\al{\alpha}
\def\be{\beta}
\def\de{\delta}
\def\eps{\epsilon}
\def\ga{\gamma}
\def\io{\iota}
\def\ka{\kappa}
\def\la{\lambda}
\def\na{\nabla}
\def\om{\omega}
\def\si{\sigma}
\def\th{\theta}
\def\ups{\upsilon}
\def\ve{\varepsilon}
\def\vp{\varpi}
\def\vt{\vartheta}
\def\ze{\zeta}

\def\De{\Delta}
\def\Ga{\Gamma}

\def\cC{\Cal C}
\def\cU{\Cal U}

\def\PGL{\text{\rm PGL}}
\def\NE{\text{\rm NE}}

\def\dim{\text{\rm dim}}
\def\codim{\text{codim}}
\def\Eff{\text{\rm Eff}}
\def\Exc{\text{\rm Exc}}
\def\h{\text{h}}
\def\p{\mathfrak{p}}
\def\HH{\text{H}}
\def\MM{\overline{\text{M}}}
\def\M{\MM}
\def\NE{\overline{\text{NE}}}
\def\Pic{\text{\rm Pic}}
\def\Proj{\text{\rm Proj}}
\def\Spec{\text{Spec}}
\def\Bl{\text{\rm Bl}}

\def\dra{\dashrightarrow}
\def\hra{\hookrightarrow}
\def\lra{\leftrightarrow}
\def\ra{\rightarrow}


\def\oM{\overline{M}}

\def\bP{\Bbb P}
\def\Hes{\operatorname{Hes}}
\def\Ceva{\operatorname{Ceva}}

\def\bF{\Bbb F}
\def\Aut{\operatorname{Aut}}
\def\cO{\Cal O}
\def\lie{\operatorname{Lie}}
\def\Det{\operatorname{Det}}
\def\codim{\operatorname{codim}}
\def\rank{\operatorname{rank}}
\def\Perm{\operatorname{Perm}}
\def\mult{\operatorname{mult}}
\def\bZ{\Bbb Z}
\def\bC{\Bbb C}
\def\bQ{\Bbb Q}
\def\bG{\Bbb G}
\def\bR{\Bbb R}
\def\bA{\Bbb A}
\def\bB{\Bbb B}
\def\bX{\Bbb X}
\def\oX{\overline{X}}
\def\oW{\overline{W}}

\def\oY{\overline{Y}}
\def\oYn{{\overline{Y^n}}}
\def\obP{\bP^{\circ}}
\def\oF{\overline{F}}
\def\cB{\Cal B}
\def\cA{\Cal A}
\def\cD{\Cal D}
\def\cS{\Cal S}
\def\cE{\Cal E}
\def\cF{\Cal F}

\def\tS{\tilde S}
\def\cI{\Cal I}
\def\cA{\Cal A}
\def\tcA{\tilde{\cA}}
\def\tpi{\tilde{\pi}}
\def\tp{\tilde{p}}
\def\tA{\tilde{A}}
\def\tX{\tilde{X}}
\def\tbP{\tilde{\bP}}
\def\tP{\tilde{P}}

\def\uI{\underline{I}}
\def\uA{\underline{A}}
\def\ucI{\underline{\cI}}
\def\uP{{\underline{P}}}
\def\uS{\underline{S}}
\def\uB{\underline{B}}

\def\uW{\underline{W}}
\def\oS{\overline{S}}
\def\cT{\Cal T}
\def\cP{\Cal P}
\def\cB{\Cal B}
\def\tS{\tilde{S}}
\def\qp{\Cal P}
\def\cW{\Cal W}
\def\tqp{\tilde{\qp}}
\def\tB{\tilde{B}}
\def\tcB{\tilde{\cB}}
\def\oN{\overline{N}}
\def\INC{\operatorname{INC}}
\def\tcA{\tilde{\cA}}
\def\cB{\Cal B}
\def\cY{\Cal Y}
\def\ocY{\overline{\cY}}
\def\cD{\Cal D}
\def\cX{\Cal X}
\def\cH{\Cal H}
\def\cM{\Cal M}
\def\cZ{\Cal Z}
\def\ocZ{\overline{\Cal Z}}
\def\ocS{\overline{\Cal S}}
\def\tcS{\tilde{\cS}}
\def\mg{\overline{M}_g}
\def\Spec{\operatorname{Spec}}
\def\Proj{\operatorname{Proj}}
\def\PGL{\operatorname{PGL}}
\def\HH{\operatorname{H}}
\def\tM{\tilde{M}}
\def\lc{\operatorname{lc}}
\def\ss{\operatorname{ss}}
\def\oo{\overline{\omega}}
\def\ls{\oo^S}
\def\tx{\tilde{x}}
\def\utA{\underline{\tilde{A}}}
\def\Stab{\operatorname{Stab}}
\def\aff{\Bbb A}
\def\cE{\Cal E}
\def\czE{\mathop{\cE}\limits^\circ}
\def\bS{\Bbb S}
\def\bB{\Bbb B}
\def\ga{\gamma}
\def\dlog{\operatorname{dlog}}
\def\dlogs{\operatorname{dlogs}}
\def\qP{\Cal P}
\def\obS{\overline{\bS}}
\def\too{\tilde{\omega}}
\def\Res{\operatorname{Res}}
\def\Ord{\operatorname{Ord}}
\def\ord{\operatorname{ord}}
\def\oRes{\overline{\Res}}
\def\ores{\oRes}
\def\st{S}
\def\ocB{\overline{\cB}}
\def\tR{\tilde R}
\def\Star{\operatorname{Star}}
\def\oLambda{\overline{\Lambda}}
\def\olamda{\oLambda}
\def\osigma{\overline{\sigma}}
\def\bl{\operatorname{BL}}
\def\tqP{\tqp}
\def\ogamma{\overline{\gamma}}
\def\tL{\tilde{L}}
\def\bk{\Bbb k}
\def\tE{\tilde E}
\def\cK{\Cal K}
\def\obS{\overline{\bS}}\def\obB{\overline{\bB}}
\def\cU{\Cal U}
\def\cV{\Cal V}
\def\cR{\Cal R}
\def\cO{\Cal O}
\def\bU{\Bbb U}
\def\res{\Res}
\def\oZ{\overline{Z}}
\def\tPsi{\tilde{\Psi}}
\def\oR{\overline{R}}
\def\oK{\overline{K}}
\def\Sing{\operatorname{Sing}}
\def\Sp{\operatorname{Sp}}
\def\rk{\operatorname{rk}}
\def\O{\cO}
\def\Char{\operatorname{char}}
\def\Pic{\operatorname{Pic}}

\def\oPic{\overline{\Pic}}
\def\oPicone{\oPic^{\uone}}
\def\ocPic{\overline{\cPic}}

\def\Spec{\operatorname{Spec}}
\def\Im{\operatorname{Im}}
\def\pr{\operatorname{pr}}
\def\Hom{\operatorname{Hom}}
\def\Cal{\mathcal}
\def\oC{\overline C}
\def\bX{\Bbb X}
\def\oXTX{\overline{X/H}}
\def\oTTX{\overline{T/H}}
\def\ov{\overline}
\def\tX{\tilde X}
\def\tB{\tilde B}
\def\eps{\varepsilon}
\def\hY{\hat Y}
\def\hD{\hat D}
\def\hp{\hat p}

\def\W{\mathop{\circ}}
\def\B{\mathop{\bullet}}

\def\al{\alpha}
\def\be{\beta}
\def\ga{\gamma}

\def\arrow{\mathop{\longrightarrow}\limits}
\def\mysim{\mathop{\sim}\limits}

\def\bcup{\bigcup\limits}

\newtheorem{Review}[thm]{}

\title{Rigid curves on $\oM_{0,n}$ and arithmetic breaks}

\author{Ana-Maria Castravet}
\author{Jenia Tevelev}

\address{Ana-Maria Castravet: \sf Department of Mathematics, The Ohio State University, 100 Math Tower, 231 West 18th Avenue, Columbus, OH 43210-1174} 
\email{noni@alum.mit.edu}

\address{\vskip -.5cm Jenia Tevelev: \sf Department of Mathematics, 
University of Massachusetts at Amherst, Lederle Graduate Research Tower, Amherst, MA 01003-9305} 
\email{tevelev@math.umass.edu}

\thanks{The first author was partially supported  by the NSF grant DMS-1001157.
The second author was partially supported by the NSF grants DMS-0701191, DMS-1001344, 
and the Sloan fellowship.} 

\subjclass[2000]{Primary 14E30, 14H10, 14H45, 14M99; Secondary: 14G40} 

\date{today}

\begin{abstract}
A result of Keel and M\textsuperscript{c}Kernan states that a hypothetical counterexample  to the
$F$-conjecture must come from rigid curves on $\overline{M}_{0,n}$ that intersect the interior. We exhibit several ways of constructing rigid curves. In all our examples, a reduction mod $p$ argument shows that the  classes of the rigid curves that we construct can be decomposed as sums of $F$-curves.

\end{abstract}

\maketitle


\section{Introduction}

Let $\MM_{0,n}$ be the moduli space of stable $n$-pointed rational curves. The one-dimensional boundary strata of the moduli space, i.e., the irreducible components of the locus parameterizing rational curves with at least $n-3$ components are often called \emph{$F$-curves}. A long standing open question \cite{KM} (known as the $F$-conjecture) is whether  the Mori cone of curves $\NE(\MM_{0,n})$ is generated by $F$-curves. Gibney, Keel and Morrison \cite{GKM} proved that the $F$-conjecture for all $n$ implies that the same is true for the moduli spaces  $\MM_{g,n}$ of stable, genus $g$, $n$-pointed curves, namely, that the Mori cone $\NE(\MM_{g,n})$ is generated by one-dimensional boundary strata (thus, giving an explicit description of the ample cone of $\MM_{g,n}$).

Keel and M\textsuperscript{c}Kernan \cite{KM} proved the $F$-conjecture for  $n\leq7$ and proved that a hypothetical counterexample to the $F$-conjecture must come from \emph{rigid curves} intersecting the interior $M_{0,n}$ (see Thm. \ref{KM theorem} for a precise statement).  
The notion of rigidity in the Keel-M\textsuperscript{c}Kernan result is a very strong one:
\begin{defn}\label{rigid def}
Let $C$ be a curve on a variety $X$. We say that $C$ \emph{moves} on $X$ if there is a flat family of curves $\pi:S\ra B$ over a curve germ $(b_0\in B)$, 
with a map $h:S\ra X$ such that $\dim\,h(S)=2$ and $h(S_{b_0})=C$ (set-theoretically).
We~say that $C$ is \emph{rigid} on $X$ if $C$ does not move. 
\end{defn}


\Trick{Constructing rigid curves.}
We observe that if the curve $C$ is an irreducible component of the exceptional locus of a regular map $X\ra Z$ (for some $Z$), then $C$ is rigid on $X$ in the sense of Def. \ref{rigid def}. 
Indeed, this is an immediate application of Mumford's rigidity lemma \cite[p.43]{Mu}.
On $\MM_{0,n}$, the natural maps to consider are products of forgetful maps. 
In Sections \ref{blow-ups section}, \ref{hypergraph section}, and \ref{HesseSection} 
we discuss a construction, which we call the \emph{hypergraph construction}.
The basic idea is that $\MM_{0,n}$ is covered by arbitrary blow-ups of $\bP^2$ in $n$ points
(as long as these points do not belong to a (possibly reducible) conic). The curves that we consider are $(-1)$-curves
in these blow-ups. The hypergraph construction uses a rigid configuration of $n$ points to construct interesting curves and surfaces in $\MM_{0,n}$, that intersect the interior $M_{0,n}$, and are contracted by some natural products of forgetful maps. 
It is in general difficult to decide when the exceptional locus of such a map has a $1$-dimensional irreducible component. We have been able to prove this by an ad-hoc argument in one example. Namely, one starts with the $(9_312_4)$ Hesse configuration
of $9$ inflection points of a non-singular plane cubic  and $12$ lines connecting them pairwise. Applying our hypergraph construction
to the configuration projectively dual to the Hesse configuration, 
one gets in this way a rigid curve on $\MM_{0,12}$. (This construction appeared first in the authors preprint \cite{CT1}.)

\Trick{Constructing rigid maps.}
The notion of rigidity in Def. \ref{rigid def} is much stronger than the one usually used for maps \cite{McM}: 
a map $f:C\ra X$ is called rigid if any family of maps containing $f$ is isotrivial. 
Here a family of maps is a proper flat family of curves $\pi:S\ra B$ with reduced fibers over a curve germ $(b_0\in B)$, with a map $h:\,S\ra X$ such that $h|_{S_{b_0}}=f$.
The family of maps is isotrivial if (after shrinking $B$) it is isomorphic over $B$ to the constant family $C\times B$, $h(c,b)=f(c)$.
If $C$ is a rigid curve on $X$, the embedding map $f:C\ra X$ is a rigid map, but the converse does not hold in general. Indeed, consider a family of quartic plane curves specializing to a double conic. If $C$ denotes the reduced conic on the total space $S$ of the family, then clearly $C$ is not a rigid curve on $S$, but the embedding map $C\hra S$ is rigid. 
Rigid maps $C\ra\MM_{0,n}$ were recently constructed by Chen~\cite{Ch} using results of McMullen \cite{McM} and M\"oller \cite{Mo} on Teichm\"uller curves. An amazing feature of Chen's curves is that their union is dense in $\MM_{0,n}$ for every $n\geq8$. It seems to be a difficult problem to decide whether these curves are rigid in the sense of Def.~\ref{rigid def}. 

In Section~\ref{deJ-K}  we present a different construction of rigid maps
inspired by discussions with J. Koll\'ar and J. de Jong from a few years ago. It uses rigid configurations of lines and conics in the plane. We call this the ``Two Conics'' construction. We give an explicit example of such a curve in Section \ref{break of 2-conics curve} using the configuration of Gr\"unbaum~\cite[5.5]{G}
of $9$ lines in the plane representing the golden ratio.

\Trick{Arithmetic Breaks.} 
We then proceed to show that all the rigid curves (and images of rigid maps) that we found can be decomposed into sums of $F$-curves. This is easy for curves found in \cite{Ch}:
these curves lie in the symmetric Mori cone $\NE(\MM_{0,n}/S_n)$ and their classes are easily seen to be sums of $F$-curves. 

Curves obtained using hypergraph and ``Two conics'' constructions are highly asymmetric,
and it is hard to see how their classes can break into sums of $F$-curves.
However, we have found a way to break not just the class of the curve but the curve itself
using a simple idea that we call an ``arithmetic break''.
The above-mentioned result of Keel and M\textsuperscript{c}Kernan says, roughly, that if a curve on $\oM_{0,n}$ moves in a one-parameter family then it breaks (one of the fibers is reducible). 

We~remark that even a rigid curve $C$ moves in an arithmetic sense. Namely, its field of definition $K$ is a field of algebraic numbers. Let $R\subset K$ be the integral closure of $\bZ$.
Then $C$ has an integral model $C_R$ over $\Spec R$, which is a subscheme in the $R$-moduli scheme $\oM_{0,n; R}$. 
We observe that in all our examples, one of the fibers of $C_R\to\Spec R$ is reducible. We further analyze irreducible components of this fiber (defined over the corresponding finite field), and show that these components move and break, 
and in fact break 
down to effective linear combinations of $F$-curves, thus showing 
that the class of $C$ is also an effective sum of $F$-curves.
Here we use a well-known fact that $\Pic \oM_{0,n}$ is characteristic-independent (this follows
from  the description of $\M_{0,n}$ as a blow-up - Kapranov \cite{Ka}, Keel \cite{Ke} or Knudsen \cite{Kn}). 
This raises an interesting question:
\begin{Question}
Is it possible to construct a rigid curve on $\oM_{0,n}$ 
that intersects the interior such that all its reductions modulo $p$ are irreducible?
\end{Question}
Note that rigidity is important here: it is possible to construct an embedding  
$\PP^1_R\ra\MM_{0,n; R}$ that intersects the interior, even though we know only one example,
which arises from the Gr\"unbaum
configuration (see Section \ref{rigid matroids}) 
using the hypergraph construction. However, the generic fiber of such a map is not a  rigid curve.

\Trick{\bf Structure of the paper.}
In Section \ref{KM argument}, for the reader's convenience we reproduce, with the authors' permission, the Keel-M\textsuperscript{c}Kernan argument (Thm. \ref{KM theorem} does not appear in its current form in \cite{KM}). In  Section \ref{blow-ups section} we give a general construction of surfaces in $\MM_{0,n}$, that intersect the interior $M_{0,n}$, starting with a configuration of points in $\PP^2$. Section \ref{hypergraph section} explains the 
hypergraph construction. In Section \ref{HesseSection} we consider a specific example coming from the Hesse configuration. We find a curve on $\MM_{0,12}$ which is an irreducible component of the exceptional locus of a generically finite map $\MM_{0,12}\ra Z$. In Section \ref{deJ-K}
we present the ``two conics construction". In Section \ref{break of hypergraph curve} we explain how the Hesse curve breaks into several components in positive characteristic. This allows us to write the class of the curve as a sum of $F$-curves. In Section \ref{break of 2-conics curve} we do the same for a curve obtained via the two-conic construction.

\

We work over an algebraically closed field $k$ (in  Sections \ref{KM argument}, \ref{blow-ups section}, \ref{hypergraph section}, \ref{HesseSection} and \ref{deJ-K}), 
unless we specify otherwise (such as in Sections \ref{break of hypergraph curve},  \ref{break of 2-conics curve} and \ref{tangent}).

\Trick{Acknowledgements}
The first author was partially supported  by the NSF grant DMS-1001157.
The second author was partially supported by the NSF grants DMS-0701191, DMS-1001344, 
and the Sloan fellowship. Parts of this paper were written while the first author was visiting the Max-Planck Institute in Bonn, Germany. The authors are grateful to the referee for useful suggestions on how to improve the exposition of the article. 

\tableofcontents


\section{The Keel-M\textsuperscript{c}Kernan theorem}\label{KM argument}

\begin{defn}\label{ConvexEdge}
We say that an extremal ray $R$ of a closed convex cone $\cC \subset \RR^n$
is an {\em edge} if $\cC$ is ``not rounded'' at $R$.
Concretely, the vector space $R^{\perp} \subset (\RR^n)^*$ (of linear forms that vanish
on $R$) should be generated by supporting hyperplanes for $\cC$. 
\end{defn}

\begin{thm}\cite{KM}\label{KM theorem}
Suppose that the Mori cone $\NE(\MM_{0,n})$ has an extremal ray which is an edge and is 
generated by a curve $C \subset \MM_{0,n}$ such that $C\cap M_{0,n}\ne\emptyset$. Then $C$ is rigid.
\end{thm}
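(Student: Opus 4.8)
The plan is to argue by contradiction: suppose $C$ moves in a family $\pi:S\to B$ over a curve germ $(b_0\in B)$ with $h:S\to\MM_{0,n}$, $\dim h(S)=2$, and $h(S_{b_0})=C$. After base change, normalization, and a suitable resolution we may assume $S$ is a smooth surface, $B$ is a smooth curve, $\pi$ is a flat family of (connected, possibly nodal) curves, and $h$ is a morphism contracting no component of the general fiber (otherwise $\dim h(S)\le 1$). The key numerical input is that the class $[C]=h_*[S_{b_0}]$ and the class $h_*[S_b]$ of a general fiber are equal in $N_1(\MM_{0,n})$, since all fibers of $\pi$ are algebraically (hence numerically) equivalent. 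The first step is to understand what the general fiber $S_b$ is mapped to. If $h(S_b)$ were again a curve spanning the ray $\RR_{\ge 0}[C]$, then $h$ would be generically finite onto a surface swept out by curves numerically proportional to $C$; I want to rule this out or exploit it.

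The crux is to use the hypotheses that $[C]$ generates an \emph{extremal ray} which is moreover an \emph{edge}. Extremality means that whenever $[C]=\sum a_i[\Gamma_i]$ with $a_i\ge 0$ and $\Gamma_i$ effective curves, every $\Gamma_i$ lies on the ray $\RR_{\ge 0}[C]$. Apply this to the decomposition of $h_*[S_b]$ into its irreducible image components: each component of $h(S_b)$, counted with the appropriate multiplicity, has class on the ray $\RR_{\ge0}[C]$. Now comes the edge condition: $R^\perp\subset N^1(\MM_{0,n})$ is generated by classes of nef divisors $D$ that are supporting, i.e. $D\cdot C=0$ but $D$ is nef on all of $\MM_{0,n}$. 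For such a $D$, we get $D\cdot S_b = D\cdot S_{b_0}=0$ in $S$, i.e. the pullback $h^*D$ is a nef divisor on $S$ that is trivial on every fiber of $\pi$. By the theory of nef divisors on surfaces fibered over a curve (or directly: $h^*D$ is nef, $(h^*D)^2\ge 0$, $(h^*D)\cdot F=0$ for a fiber $F$, so by Hodge index $(h^*D)^2=0$ and $h^*D$ is numerically a rational multiple of $F$), $h^*D$ is numerically proportional to the fiber class $F=[S_b]$. Running this over a generating set of $R^\perp$, I conclude that $h^*$ kills the codimension-one subspace $R^\perp$, i.e. the image of $h^*:N^1(\MM_{0,n})\to N^1(S)$ is at most one-dimensional; equivalently $h$ factors numerically through a single class. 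Combined with $\dim h(S)=2$, this is the tension I want to push to a contradiction: a surface image cannot have its Néron–Severi pullback be one-dimensional unless the surface $h(S)$ itself has Picard number one and $h^*D$ is big — but $h^*D$ was shown to have self-intersection $0$, contradicting bigness of the restriction of the ample-modulo-$R^\perp$ part.

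More carefully, the finishing step: pick any divisor class $A$ on $\MM_{0,n}$ with $A\cdot C>0$ (possible since $C\ne 0$); since $R$ is extremal and an edge, $A$ can be written as (positive multiple of a supporting $D$) plus (something in $R^\perp$'s complement), and one shows $h^*A$ is then numerically $\lambda F + (\text{nef with self-intersection }0)$, forcing $(h^*A)^2=0$ and $h^*A\cdot F=0$, so $h^*A$ is numerically a multiple of $F$ as well. Then for \emph{every} divisor class $L$ on $\MM_{0,n}$, $h^*L$ is numerically proportional to $F$, so $h^*L\cdot F=0$; in particular $L\cdot h_*F = L\cdot C=0$ for all $L$, forcing $C\equiv 0$, a contradiction since $C$ spans an extremal ray. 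Hence $C$ cannot move, i.e. $C$ is rigid.

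The main obstacle I expect is the careful handling of the family $\pi:S\to B$: ensuring after base change and resolution that $h$ contracts no fiber component (so $h^*D\cdot F=0$ genuinely forces $h^*D$ proportional to $F$ and not merely supported on fiber components), and correctly invoking the edge hypothesis — it is exactly what guarantees that $R^\perp$ is \emph{spanned by nef supporting classes} rather than being an abstract hyperplane, and without it the pullback argument collapses. A secondary technical point is passing between numerical and algebraic equivalence of the fibers and making sure the Hodge index / Zariski-type argument on the surface $S$ is applied to an honest nef class.
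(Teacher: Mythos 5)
Your opening moves are fine and reproduce the first half of the paper's argument: reduce to a smooth fibered surface $S\to B$ with a morphism $h:S\to\MM_{0,n}$ whose image is a surface, note that all fibers of $S\to B$ are numerically equivalent, and use the edge hypothesis together with the Hodge Index Theorem to conclude that for every nef supporting class $D$ (nef with $D\cdot C=0$) the pullback $h^*D$ is numerically a multiple of the fiber class $F$. This is exactly the mechanism inside the proof of Proposition~\ref{Prop2.2}. The finishing step, however, is wrong. For a class $A$ with $A\cdot C>0$ the projection formula gives $h^*A\cdot F=A\cdot h_*F$, which is a \emph{positive} multiple of $A\cdot C$, not zero; and since $h$ is generically finite onto the surface $h(S)$, one has $(h^*A)^2>0$ for $A$ ample. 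So $h^*A$ is not proportional to $F$, the image of $h^*$ is two-dimensional rather than one-dimensional, and the claimed contradiction ``$L\cdot C=0$ for all $L$'' rests on a false intermediate assertion. There is no tension whatsoever in having $h^*(R^{\perp})\subset\RR F$: this is exactly what happens for the fiber class of any fibration.

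The deeper problem is that your argument uses no property of $\MM_{0,n}$ and never invokes the hypothesis $C\cap M_{0,n}\neq\emptyset$; as written it would show that on \emph{any} projective variety a curve generating an extremal edge ray is rigid, which is false (a fiber of $\PP^1\times\PP^1\to\PP^1$ generates an extremal edge ray and moves in a pencil). The paper's proof needs two additional, moduli-specific inputs. First, the boundary $\De=\sum\delta_I$ has ample support and each of its components is anti-nef; Proposition~\ref{Prop2.2} combines the Hodge-index step above with a carefully chosen pair of multisections $B_1,B_2\subset h^{-1}(\De)$ and the class $E=D_1'-\lambda D_2'\equiv\mu F$ to force the extremal ray to be generated by a curve lying in the boundary. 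Second, Lemma~\ref{negative} shows that any curve contained in the boundary has negative intersection with some boundary divisor $\delta_{\alpha}$; since $C$ is numerically a positive multiple of such a curve, $C\cdot\delta_{\alpha}<0$, hence $C\subset\delta_{\alpha}$, contradicting $C\cap M_{0,n}\neq\emptyset$. Without the first input you cannot move the ray into the boundary, and without the second you cannot contradict the interior hypothesis; your proposal contains neither.
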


\begin{rmk}
Assuming that the Mori cone $\NE(\MM_{0,n})$ is finitely generated, and moreover each extremal ray is generated by a curve, then  if $C$ is a curve that generates an extremal ray of $\NE(\MM_{0,n})$, then either $C$ intersects the interior $M_{0,n}$ and by Theorem  \ref{KM theorem} it must be rigid, or $C$ is contained in a boundary component. In the latter case, as 
\begin{equation}\label{product}
\NE(\MM_{0,k}\times \MM_{0,l})\cong\NE(\MM_{0,k})\times \NE(\MM_{0,l})
\end{equation}
it follows that $C$ is obtained from a curve in $\MM_{0,k}$ for some $k<n$,
by attaching a fixed curve with fixed markings. Moreover, $C$ itself generates an extremal ray of  $\NE(\MM_{0,k})$. It follows that either $C$ is an $F$-curve, or eventually one obtains a counterexample to the $F$-conjecture from a rigid curve $C\subset\MM_{0,n}$ that intersects the interior $M_{0,n}$.
\end{rmk}

For completeness, note that $F$-curves do generate extremal rays of  $\NE(\MM_{0,n})$. This is easily seen by induction, using (\ref{product}). Moreover, for any $F$-curve $F$, we have $F\cdot(K_{\MM_{0,n}}+\De)=1$, where $K_{\MM_{0,n}}$ is the canonical class and $\De$ is the sum of all boundary \cite[Rmk. 3.7 (1)]{KM}. 

\begin{rmk}\label{lower bound}
A \emph{rational} rigid curve $C\subset\MM_{0,n}$ has the property that 
$K_{\MM_{0,n}}\cdot C\geq n-6$. This follows from the usual lower bound for the dimension of the Hom-scheme locally at a point $[f]$, where $f:\PP^1\ra\MM_{0,n}$:
$$\dim_{[f]}\Hom(\PP^1,\MM_{0,n})\geq -K_{\MM_{0,n}}\cdot f_*[\PP^1]+\dim(\MM_{0,n}).$$
If the curve $f(\PP^1)$ is rigid, then it must be that 
$$\dim_{[f]}\Hom(\PP^1,\MM_{0,n})\leq\dim\PGL_2=3.$$ 
Note that by \cite[Lemma 3.5]{KM}, we have:
\begin{equation}\label{canonical}
-K_{\MM_{0,n}}=\sum_{k=2}^{\lfloor\frac{n}{2}\rfloor}\big(2-\frac{k(n-k)}{n-1}\big)\de_k.
\end{equation}
where we denote $\de_k=\sum_{|I|=k}\de_I$.

\end{rmk}

\begin{defn}\label{2.1}
We say that an effective Weil divisor on a projective variety
has ample support if it has the same support as some effective ample divisor.
\end{defn}

\begin{defn}\label{antinef}
We say that an effective divisor $D$ is anti-nef if $D\cdot C\leq0$ for any curve $C$ contained in the support of $D$.
\end{defn}

\begin{prop}\cite{KM}\label{Prop2.2}
Let $M$ be a ${\QQ}$-factorial projective variety and $D$ an effective divisor with ample support, each of whose irreducible components are anti-nef. 
Let $C \subset M$ be a moving irreducible proper curve which generates an edge
$R$ of the Mori cone. Then $R$ is generated by a curve contained in the support of $D$.
\end{prop}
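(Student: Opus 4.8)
The plan is to argue by contradiction: suppose the edge $R$ generated by the moving curve $C$ is not generated by any curve contained in $\Supp D$. First I would use the hypothesis that $C$ moves to produce a flat family $\pi : S \to B$ over a curve germ $(b_0 \in B)$ with a map $h : S \to M$ with $\dim h(S) = 2$ and $h(S_{b_0}) = C$ set-theoretically. After normalizing and resolving, I may assume $S$ is a smooth projective surface and $B$ a smooth projective curve; let $F = S_{b_0}$ be the fiber over $b_0$, so $h(F) = C$. The key numerical input is that $F^2 = 0$ on $S$ (fibers of a fibration over a curve are nef with self-intersection zero), while $F \cdot F' \geq 0$ for any other fiber or component. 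The strategy is to intersect the pullback $h^* D$ with curves on $S$ and derive a contradiction with the edge/extremal-ray property of $R$.

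The main steps, in order: (1) Since $D$ has ample support, $h^* D$ is a nonzero effective divisor on $S$ whose support dominates $\Supp D \cap h(S)$; in particular $h^*D$ meets every curve on $S$ that $h$ does not contract into the complement of $\Supp D$. Because $C = h(F)$ generates an edge $R$ and (one should check, using that $\Supp D$ is the support of an ample divisor and $C$ generates an extremal ray) $C$ is not contained in $\Supp D$ — otherwise we would be done — the fiber $F$ is not a component of $h^*D$, so $h^*D$ restricts to an effective divisor on $S$ disjoint in codimension one from $F$, hence $h^*D \cdot F \geq 0$. (2) On the other hand, compute $h^*D \cdot F = D \cdot h_*F = D \cdot C$ (up to the multiplicity with which $h$ maps $F$ onto $C$, which is a positive integer). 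Since each component of $D$ is anti-nef and $C$ generates an extremal ray $R$, if $C$ were contained in $\Supp D$ we would have $D \cdot C \leq 0$; combined with the edge hypothesis this forces $R$ itself to be spanned by a component of $D$ — contradiction unless $C \not\subset \Supp D$, in which case $D \cdot C \geq 0$ with equality only if $h(F)$ avoids $\Supp D$ entirely. (3) But $D$ has ample support, so $\Supp D$ meets the two-dimensional variety $h(S)$ in a nonempty divisorial locus; a general fiber $F_b$ ($b \neq b_0$) then satisfies $D \cdot h_* F_b > 0$. Since all fibers $F_b$ are algebraically equivalent on $S$, their images $h_* F_b$ are numerically equivalent in $M$, so $D \cdot C = D \cdot h_* F_{b_0} > 0$ as well. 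Now decompose $h^*D$ (or rather, run a relative MMP / use the negativity of the components over $B$): the components of $h^*D$ that dominate $B$ contribute the positive number $D \cdot C$ to intersection with $F$, while the vertical components — those contracted by $\pi$ — sit inside fibers, and by the anti-nef hypothesis pulled back through $h$, together with Zariski's lemma on the negative semidefiniteness of the intersection form on a fiber, one extracts a curve $\Gamma \subset F$ with $h(\Gamma) \subset \Supp D$ whose class, after projecting to the extremal ray, is a positive multiple of $R$. This curve lies in $\Supp D$ and generates $R$, the desired contradiction.

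The hard part will be Step (3): one must genuinely use that $R$ is an \emph{edge} (not merely extremal) to guarantee that the numerical contribution of the horizontal part of $h^*D$, which a priori could be some effective class whose extremal-ray component is $R$ but which also has components off $R$, actually forces an honest curve inside $\Supp D \cap h(S)$ to have class proportional to $[C]$. The edge hypothesis is what lets us peel off, from an effective class $[D|_{h(S)}]$ lying on the face $R^\perp$ side, a curve whose class is precisely on $R$: concretely, $R^\perp$ is cut out by supporting hyperplanes, so an effective curve class pairing to zero with all of $D$'s components but nonnegatively with everything, and lying in the closure of the cone swept out by the vertical fibers, must lie on $R$ itself. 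I expect the cleanest route is to combine Zariski's lemma (the intersection matrix of a reducible fiber $F = \sum a_i \Gamma_i$ is negative semidefinite with kernel spanned by $[F]$) with the anti-nef hypothesis on the components of $D$: writing $h^{-1}(\Supp D) \cap F = \bigcup \Gamma_i$, the equations $D \cdot h_* \Gamma_i = (h^*D) \cdot \Gamma_i$ together with $\sum (h^*D)\cdot \Gamma_i \cdot (\text{mult}) \geq 0$ and the sign constraints pin down a single $\Gamma_i$ with $h(\Gamma_i)$ spanning $R$ and contained in $\Supp D$. Everything else is routine: base change to make the family semistable, the projection formula, and the observation that $\Pic$ and $\NE$ computations are insensitive to the resolution chosen for $S$.
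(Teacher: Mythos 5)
Your setup (a smooth fibered surface $p:S\to B$ with $h:S\to M$, $F^2=0$, the projection formula, and the reduction to the case $C\not\subset\operatorname{Supp}D$) matches the paper's, but the argument collapses exactly where you yourself flag it: Step (3). Two concrete problems. First, any curve $\Gamma\subset F$ has $h(\Gamma)$ equal to a point or to $C$, since $h(F)=C$ is irreducible; as you have already reduced to $C\not\subset\operatorname{Supp}D$, no curve inside $F$ can be the sought-for curve in the support of $D$. In fact the proof never exhibits such a curve at all --- it is a pure contradiction argument. Second, Zariski's lemma only sees the vertical part of $h^*D$, but the vertical case is the easy one: a non-contracted vertical component of $h^{-1}(\operatorname{Supp}D)$ maps to a curve in the support of $D$ whose class lies on $R$ by extremality of $R$ (every non-contracted component of every fiber does), and one is done. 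The hard case, which your plan does not address, is when every non-contracted component of $h^{-1}(\operatorname{Supp}D)$ is a multisection of $p$.

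Here is the mechanism you are missing. Decompose $D=\sum D_i$, set $D_i'=h^{-1}(D_i)$, and choose a non-contracted component $B_1$ of $\sum D_i'$ lying on a \emph{maximal} number of the $D_i'$. Anti-nefness gives $G\cdot B_1\le0$ for every $D_i'=G$ containing $B_1$, while ample support gives some $D_2'$ with $D_2'\cdot B_1>0$; maximality of $B_1$ then yields a non-contracted component $B_2\subset D_2'$ and some $D_1'\supset B_1$ with $B_2\not\subset D_1'$, so that $D_1'\cdot B_1\le0<D_2'\cdot B_1$ and $D_1'\cdot B_2\ge0\ge D_2'\cdot B_2$. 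Now set $E=D_1'-\lambda D_2'$ with $\lambda>0$ chosen so that $E\cdot F=0$. The edge hypothesis enters on the \emph{divisor} side, not the curve side: $D_1-\lambda D_2$ lies in $R^{\perp}$, hence is a real combination of supporting (nef) hyperplane classes, whose pullbacks $H_i$ to $S$ are nef with $H_i\cdot F=0$; the Hodge Index Theorem --- which your sketch never invokes --- then forces $H_i\equiv\mu_iF$, hence $E\equiv\mu F$. Since $B_1,B_2$ are multisections, $F\cdot B_1>0$ and $F\cdot B_2>0$, so $E\cdot B_1$ and $E\cdot B_2$ must have the same sign, contradicting $E\cdot B_1<0\le E\cdot B_2$. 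Without this (or an equivalent) step your proposal does not close; the fragments about "peeling off" an effective curve class on $R^{\perp}$ do not substitute for it.
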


\begin{proof}
Let $p:\,S\ra B$ be a proper surjection from a surface $S$
to a non-singular curve $B$ and let $h:\,S \rightarrow M$ be a morphism such that
$T=h(S) \subset M$ is a surface and there exists a fibre $F$ of $p$ with $h(F)$ set theoretically equal to $C$. Clearly, we may assume $S$ is smooth.  Suppose on the contrary that no
curve in $D \cap T$ generates the same extremal ray as $C$.

Let $D = \sum D_i$ be the decomposition into irreducible components. Let $D'_i=h^{-1}(D_i)$. 
Clearly, each $D'_i$ is an effective $\QQ$-Cartier divisor, and in particular, is purely one dimensional. Let $D'=\sum D'_i$. 
As $D$ has ample support, $D'$ is non-empty. 
Since $C=h(F)$ generates an extremal ray of the Mori cone of $M$, it follows that
any component of any fiber of  $p:S\ra B$ is either contracted by $h$ or 
belongs to the same extremal ray $R$.  
In particular, all components of $D'$ which are not contracted by $h$ are multisections of $p:\,S\ra B$. 

We show next that we can find two irreducible curves $B_1, B_2 \subset D'$ and (after renaming) two divisors $D'_1, D'_2$ with $B_i \subset D_i$ such that $B_1.D'_2>0$ and $B_2.D'_1\geq0$.

Choose an irreducible component $B_1$ of $D'$ not contracted to a point by $h$
and contained in a maximal number of $D'_i$'s. (We use here that $h(S)$ is a surface.)
Suppose that $G_1,\dots,G_k$ are the components of $D'$ containing $B_1$. 
Since the $D_i$'s are anti-nef, $G_i\cdot B_1\le 0$.
Since $D$ has ample support, there exists a $D'_i$ such that $D'_i \cdot B_1 > 0$.
After renaming, we may assume that  $D'_2 \cdot B_1 > 0$. Pick any component $B_2$ of $D'_2$ not contracted to a point by $h$.  By the choice of $B_1$ there exists $G_i\not\supset B_2$. 
We set $D'_1= G_i$.

Let $\la>0$ be such that $E:=D_1' - \lambda D_2'$ has zero intersection with the general fiber. In particular, $E.F=0$. As $R$ is an edge, $E$ is numerically equivalent to $\sum a_i H_i$, where 
$a_i\in\RR$ and $H_i$ is a nef divisor on $S$ with $H_i\cdot F=0$ (pull-back of a supporting hyperplane on $M$). As $F^2=0$, the Hodge Index Theorem implies that for all $i$ we have $H_i=\mu_iF$, for some $\mu_i\in\RR$ and hence, $E=\mu F$, for some $\mu\in\RR$. Since $B_1$, $B_2$ are multisections, it follows that $E.B_1$, $E.B_2$ are both nonnegative or both negative. This gives a contradiction, as by choice of $B_1$ and $B_2$, we have $E.B_1<0$ and $E.B_2\geq0$. 
\end{proof}

\begin{proof}[Proof of Theorem~\ref{KM theorem}] 
Suppose, on the contrary, that $C$ is a moving curve that intersects $M_{0,n}$ and generates an edge of the Mori cone.  Note that the boundary $\De$ of $\MM_{0,n}$ has ample support \cite[Lem. 3.6]{KM} and  every boundary component is anti-nef \cite[Lem. 4.5]{KM}. 
By Prop.~\ref{Prop2.2}, $C$ is numerically equivalent to a positive multiple of a curve $C'$
on the boundary. By Lemma \ref{negative}, there is some boundary divisor which intersects it negatively. Then this divisor intersects $C$ negatively and therefore $C$ is contained in the boundary.  Contradiction.
\end{proof}

\begin{lemma}\label{negative}
For any curve $C$ in the boundary of $\MM_{0,n}$, there is a boundary component $\De_{\alpha}$ such that $C\cdot\De_{\alpha}<0$.
\end{lemma}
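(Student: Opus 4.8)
The plan is to reduce to the case where $C$ lies in a single boundary divisor, and then exploit the product structure \eqref{product}. Recall that every boundary divisor $\De_I$ of $\MM_{0,n}$ is isomorphic to $\MM_{0,|I|+1}\times\MM_{0,n-|I|+1}$, and that under this isomorphism the restriction of $\De_I$ to itself is (up to sign) a sum of the two $\psi$-classes pulled back from the factors; more precisely, the normal bundle of $\De_I$ in $\MM_{0,n}$ is $-\psi_\bullet\boxtimes\cO-\cO\boxtimes\psi_\bullet$, where $\psi_\bullet$ is the cotangent class at the node. Concretely, pick a boundary divisor $\De_I$ containing $C$. If $C\cdot\De_I<0$ we are done, so assume $C\cdot\De_I\ge 0$. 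Under the product decomposition, $C$ maps to a curve in one of the factors, say $C'\subset\MM_{0,m}$ with $m<n$, the other coordinate being constant; by induction on $n$ (the base case $n=4,5$ being trivial since then the relevant boundary strata are points or the whole space is small) there is a boundary divisor $\De'_J$ of $\MM_{0,m}$ with $C'\cdot\De'_J<0$. The divisor $\De'_J$ pulls back, via the forgetful/clutching maps, to a sum of boundary divisors of $\MM_{0,n}$, one of which — the one ``compatible'' with the partition $I$ — restricts on $\De_I$ to the pullback of $\De'_J$ from the $\MM_{0,m}$-factor. That boundary divisor $\De_\alpha$ then satisfies $C\cdot\De_\alpha = C'\cdot\De'_J<0$.

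An alternative, perhaps cleaner, approach avoids induction: use the fact that the union of all boundary divisors has ample support (\cite[Lem.~3.6]{KM}), so $\sum_\alpha a_\alpha\De_\alpha$ is ample for suitable positive $a_\alpha$; hence if $C$ were to meet every boundary divisor non-negatively and is contracted in $\MM_{0,n}$ only if $C\cdot(\text{ample})=0$, we would need $C\cdot\De_\alpha=0$ for all $\alpha$, contradicting ampleness since $C$ is an actual curve. But this only gives $C\cdot\De_\alpha\ne 0$ for some $\alpha$ with $a_\alpha>0$, not strict negativity; to upgrade to negativity I would restrict to $\De_I\supset C$ and observe that the components of the boundary meeting $\De_I$ that are ``transverse'' to it contribute non-negatively (they are effective divisors on $\De_I$ not containing $C$), while the self-intersection term $C\cdot\De_I|_{\De_I}$ is governed by the negative $\psi$-classes above; a numerical bookkeeping of $C\cdot(\sum_\alpha a_\alpha\De_\alpha)|_{\De_I}=0$ then forces one of the $\psi$-type terms, i.e. one of the boundary components cutting $\De_I$ into its product factors, to be strictly negative on $C$.

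I would carry out the first (inductive) argument as the main line, since the product formula \eqref{product} and the explicit description of boundary divisors on $\MM_{0,n}$ as products make the bookkeeping transparent, and the reduction of intersection numbers across the clutching map $\MM_{0,m}\times\MM_{0,n-m+2}\to\De_\alpha\subset\MM_{0,n}$ is standard. The base of the induction is genuinely trivial: for small $n$ every boundary curve is an $F$-curve, for which the claim is a direct computation with the known intersection numbers of $F$-curves against boundary divisors (an $F$-curve has negative intersection with the boundary divisors ``straddling'' one of its four legs).

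The main obstacle I expect is the second step of the reduction: identifying, among the boundary divisors of $\MM_{0,n}$ restricting non-trivially to $\De_I$, the precise one whose restriction agrees with the pullback of $\De'_J$ from the factor $\MM_{0,m}$ — and checking that no other component of that pullback ``contaminates'' the computation with a positive contribution on $C$. This is a combinatorial matching of partitions: a boundary divisor $\De_J'$ of $\MM_{0,m}$, where the $m$ markings are the original markings in $\bar I$ together with the node $\bullet$, corresponds under clutching to the boundary divisor $\De_{J'}$ of $\MM_{0,n}$ where $J'$ is $J$ with $\bullet$ replaced by the full index set $I$ (or its complement). One must check this $\De_{J'}$ actually contains $C$ (it does, since $C\subset\De_I$ and the partition is compatible) and that $C\cdot\De_{J'}=C'\cdot\De'_J$, which follows from the projection formula once one knows $\De_{J'}\cap\De_I$ maps isomorphically onto $\De'_J\times\{\text{pt}\}$ under the product decomposition of $\De_I$ — a fact about nested boundary strata that is routine but requires care with the case $\De_{J'}=\De_I$ itself, i.e. when $J$ is the trivial partition separating $\bullet$ alone, which is exactly the self-intersection case and is where the negativity ultimately comes from.
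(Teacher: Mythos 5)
Your approach is genuinely different from the paper's (which inducts on $|I|$ via forgetful maps $\MM_{0,n}\to\MM_{0,n-1}$, handling the case $|I|=2$ with the Kapranov $\psi$-class), but as written the main inductive line has a gap at its central step. First, the assertion that a curve $C\subset\De_I\cong\MM_{0,|I|+1}\times\MM_{0,n-|I|+1}$ ``maps to a curve in one of the factors, the other coordinate being constant'' is false in general; you must instead work with the decomposition $[C]=[C_1]+[C_2]$ in $N_1$ of the product, where $C_i$ is the pushforward to the $i$-th factor. That part is repairable, since the restriction of any $\De_K$ with $K\ne I$ to $\De_I$ is a pullback from a single factor, so no cross terms appear in the intersection numbers. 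The serious problem is the appeal to the inductive hypothesis: it applies to $C_i$ only if $C_i$ lies in the \emph{boundary} of the smaller moduli space, and nothing you have said guarantees this --- the projection of a boundary curve of $\MM_{0,n}$ to a factor of $\De_I$ can perfectly well meet the interior of that factor. Without an argument here the induction does not close.

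The missing input is exactly the $\psi$-class positivity that drives the paper's proof. Since $\De_I|_{\De_I}=-\pi_1^*\psi_\bullet-\pi_2^*\psi_\bullet$ and $\psi_\bullet$ is big and nef (it is the pullback of $\cO(1)$ under a birational Kapranov map), one always has $C\cdot\De_I\le0$, so in your ``Case B'' necessarily $C\cdot\De_I=0$ and hence $C_i\cdot\psi_\bullet=0$ for both $i$. From here you can either conclude that $C_i$ lies in the exceptional locus of the Kapranov map, hence in the boundary of the factor, so that the induction applies after all; or, more directly, write $\psi_\bullet=\sum_J\al_J\de_J$ with all $\al_J>0$ (the fact the paper invokes): if $C_i\cdot\de_J\ge0$ for all $J$ then all these numbers vanish, and combining the two factors one finds that $C$ meets every boundary divisor of $\MM_{0,n}$ in zero, contradicting the ample support of the boundary; otherwise some $\de_J$, hence the corresponding $\De_K$ on $\MM_{0,n}$, is negative on $C$ and you are done with no induction at all. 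Either repair works, but both require precisely the step you omitted; your second, ``alternative'' paragraph gestures at this bookkeeping but, as you acknowledge, does not carry it out. (A minor point: the partition of $I\cup\{\bullet\}$ isolating $\bullet$ is not a boundary divisor of the factor, so the ``self-intersection'' matching case you worry about at the end does not actually arise among the $\De'_J$.)
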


\begin{proof}
If $C$ is contained in $\de_{ij}$, consider the Kapranov morphism $\Psi:\MM_{0,n}\ra\PP^{n-3}$
with the $i$-th marking as a moving point. Then $\Psi(\de_{ij})= pt$; if we let $\psi_i=\Psi^*\O(1)$, then $C.\psi_i=0$. It is not hard to see that the class $\Psi_i$ can be written as $\sum\al_I\de_I$ with $\al_I>0$ for all $I$. If $C.\de_I\geq0$ for all $I$, then
it follows that $C.\de_I=0$ for all $I$, which is a contradiction, since the boundary has ample support. If $C$ is contained in some $\de_I$ with $|I|\geq3$, we prove the statement by induction on $|I|$: consider the forgetful map $\pi:\MM_{0,n}\ra\MM_{0,n-1}$ that forgets a marking $i\in I$. Then $\pi(\de_I)=\de_{I\setminus\{i\}}$. If $C$ is not contracted by $\pi$, then by induction, $\pi(C).\de_J<0$ for some $J\subset N\setminus i$.
By the projection formula, $C.\pi^{-1}\de_J=\pi(C).\de_J<0$ and the statement follows, as $\pi^{-1}\de_J=\de_J+\de_{J\cup\{n\}}$. If $\pi(C)=pt$ then  $C$ is a fiber of $\pi$ and it is an easy calculation to show that $C.\de_I<0$.
\end{proof}

For the reader's convenience, we sketch the proof of the following 

\begin{cor}\cite{KM}\label{F-conj}
For $n\leq 7$ the Mori cone $\NE(\MM_{0,n})$ is generated by $F$-curves. 
\end{cor}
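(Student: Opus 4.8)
\textbf{Proof proposal for Corollary~\ref{F-conj}.}
The plan is to combine Theorem~\ref{KM theorem} with the structure theory of the Mori cone in low dimensions, reducing everything to a finite check. First I would recall that $\NE(\MM_{0,n})$ is a rational polyhedral cone for $n\le 7$: this is a known fact (e.g.\ $\MM_{0,n}$ is a Mori dream space in this range, or one can appeal directly to the fact that the Cox ring is finitely generated, or simply that the relevant intersection computations are finite-dimensional), and in particular every extremal ray is generated by an irreducible curve and is automatically an edge because the cone is polyhedral. I would then dichotomize on such an extremal generator $C$: either $C\cap M_{0,n}\ne\emptyset$ or $C$ lies in the boundary.

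In the boundary case, I would invoke the product decomposition \eqref{product}, $\NE(\MM_{0,k}\times\MM_{0,l})\cong\NE(\MM_{0,k})\times\NE(\MM_{0,l})$, together with the fact that every boundary divisor $\de_I$ is isomorphic to $\MM_{0,|I|+1}\times\MM_{0,n-|I|+1}$. An extremal ray of $\NE(\MM_{0,n})$ contained in such a boundary divisor must, by the product formula, be extremal in one of the two factors and come from a fixed point in the other; by induction on $n$ (the base cases $n\le 5$ being trivial, since $\MM_{0,4}\cong\PP^1$ and $\MM_{0,5}$ is a del Pezzo surface of degree $5$ whose cone is generated by the ten $F$-curves), that extremal ray is generated by an $F$-curve of the factor, hence by an $F$-curve of $\MM_{0,n}$. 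In the interior case, Theorem~\ref{KM theorem} forces $C$ to be rigid, and here is where the hypothesis $n\le 7$ is used in an essential, non-inductive way: for $n\le 7$ one must rule out rigid interior curves generating extremal rays. I would do this via the canonical-class bound of Remark~\ref{lower bound}: if $C$ is rational and rigid then $K_{\MM_{0,n}}\cdot C\ge n-6$, which for $n=6,7$ is a genuine constraint, and combined with \eqref{canonical} expressing $-K_{\MM_{0,n}}$ as an explicit positive combination of the $\de_k$, one gets bounds on the boundary intersection numbers $C\cdot\de_I$ tight enough that a finite case analysis (carried out in \cite{KM}) shows no such $C$ exists outside the span of $F$-curves; for $n\le 5$ there is nothing to prove.

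Since $F$-curves are known to generate extremal rays of $\NE(\MM_{0,n})$ (remarked in the excerpt, by induction using \eqref{product}), and we have just shown every extremal ray is generated by an $F$-curve, polyhedrality gives $\NE(\MM_{0,n})=\sum_{F} \RR_{\ge 0}[F]$, which is the assertion.

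\textbf{Main obstacle.} The genuinely hard step is the interior case for $n=7$: one cannot induct, and one must actually exclude rigid curves meeting $M_{0,7}$ that generate extremal rays. This is where \cite{KM} does real work, using the canonical-bundle inequality of Remark~\ref{lower bound}, the explicit formula \eqref{canonical}, and a somewhat delicate enumeration of the possible numerical classes; everything else (polyhedrality, the inductive boundary analysis, the final assembly) is comparatively routine. Accordingly, in a sketch I would state the interior-case exclusion as the crux and refer to \cite{KM} for the numerical enumeration rather than reproduce it.
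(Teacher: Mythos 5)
Your overall architecture matches the paper's: polyhedrality for $n\le 7$, the boundary/interior dichotomy, induction via \eqref{product} in the boundary case, and Theorem~\ref{KM theorem} plus the canonical bound of Remark~\ref{lower bound} in the interior case. But there is one genuine gap in the interior case: Remark~\ref{lower bound} applies only to \emph{rational} rigid curves, and you never discharge the hypothesis that the extremal generator $C$ meeting $M_{0,n}$ can be taken rational. Your phrase ``if $C$ is rational and rigid'' is doing unacknowledged work. The paper isolates exactly this as the remaining step --- ``we are left to prove that any extremal ray of $\NE(\MM_{0,n})$ is generated by a rational curve'' --- and settles it by the Cone Theorem for $n=6$ (using that $-K_{\MM_{0,6}}=\frac{2}{5}\de_2+\frac{1}{5}\de_3$ has ample support) and by \cite[Prop.~2.4]{KM} with $D=\de$, $G=\epsilon\de_2$ for $n=7$ (note $-K_{\MM_{0,7}}=\frac{1}{3}\de_2$ does \emph{not} have ample support, so the Cone Theorem alone does not suffice there). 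Without rationality the Hom-scheme inequality gives you nothing, and the argument does not close.

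You also misplace where the difficulty lies. Once rationality is in hand, excluding rigid rational curves through the interior is \emph{not} a ``delicate enumeration of numerical classes'': for $n=7$, rigidity forces $K\cdot C\ge 1$ while $C\cdot\de_2\ge 0$ (as $C$ is not contained in the boundary) gives $-K\cdot C\ge 0$, an immediate contradiction; for $n=6$, $K\cdot C\ge 0$ forces $C\cdot\de_2=C\cdot\de_3=0$, contradicting ample support of the boundary. So the crux you should flag is the rationality of extremal generators (and, secondarily, the polyhedrality input, which the paper also cites from \cite{KM} rather than from Mori-dream-space technology). The boundary induction and the final assembly in your write-up are fine and agree with the paper's remark following Theorem~\ref{KM theorem}.
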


\begin{proof}
This is clear for $n\leq5$. Assume $n=6$ or $7$.  We will use here the fact that the Mori cone 
$\NE(\MM_{0,n})$ is polyhedral for $n=6, 7$ (for details see the original argument in \cite{KM}).
By (\ref{canonical}), we have:
$$-K_{\MM_{0,6}}=\frac{2}{5}\de_2+\frac{1}{5}\de_3,\quad -K_{\MM_{0,7}}=\frac{1}{3}\de_2.$$
By Remark \ref{lower bound}, there are no \emph{rational} rigid curves intersecting the interior. This is immediate if $n=7$. For $n=6$ this follows from the fact that the boundary has ample support. We are left to prove that any extremal ray $R$ of  $\NE(\MM_{0,n})$ is generated by a rational curve (then Theorem \ref{KM theorem} will give a contradiction). This follows from the Cone Theorem  for $n=6$ (use that $-K_{\MM_{0,6}}$ has ample support). For $n=7$ this follows from \cite[Prop. 2.4]{KM} (with $D=\de$, $G=\epsilon\de_2$, for $\epsilon<<1$).
\end{proof}

To our knowledge, it is not known if $\NE(\MM_{0,n})$ is polyhedral when $n\geq8$. 


\section{Surfaces in $\MM_{0,n}$ from configurations of points in $\PP^2$}\label{blow-ups section}

We give a simple construction of surfaces in $\MM_{0,n}$ that intersect the interior $M_{0,n}$. 

\begin{thm}\label{blowupdescr}
Suppose $p_1,\ldots,p_n\in\bP^2$ are distinct points, and 
let $U\subset\bP^2$ be the complement to the union of lines connecting them.
The morphism 
$$F:\,U\to M_{0,n}$$ 
obtained by projecting $p_1,\ldots,p_n$ from points of $U$
extends to the morphism 
\begin{equation}\label{cutemorphism}
F:\,\Bl_{p_1,\ldots,p_n}\bP^2\to\oM_{0,n}.
\end{equation}

If there is no (probably reducible) conic through  $p_1,\ldots,p_n$
then $F$ is a closed embedding.
In this case the boundary divisors $\delta_I$ of $\oM_{0,n}$ pull-back as follows:
for each line $L_I:=\langle p_i\rangle_{i\in I}\subset\bP^2$, we have $F^*(\delta_I)=\tilde L_I$ 
(the proper transform of $L_i$) and (assuming $|I|\ge3$),
$F^*(\delta_{I\setminus\{k\}})=E_k$, where $k\in I$ and $E_k$ is the exceptional divisor over $p_k$.
Other boundary divisors pull-back trivially.
\end{thm}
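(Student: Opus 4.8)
My plan is to resolve $F$ through the standard closed immersion $\iota:\oM_{0,n}\hookrightarrow\prod_{|J|=4}\oM_{0,J}\cong(\bP^1)^{\binom{n}{4}}$ given by the forgetful maps. For $J=\{i,j,k,l\}$ the $J$-th coordinate of $\iota\circ F$ sends $q$ to the cross-ratio of $\pi_q(p_i),\dots,\pi_q(p_l)$; working in the chart where the pencil of lines through $q$ is canonically $\bP^1$ and clearing denominators, one finds this equals $q\mapsto[\ell_{ik}(q)\ell_{jl}(q):\ell_{il}(q)\ell_{jk}(q)]$, with $\ell_{ab}$ a linear form cutting out $\langle p_a,p_b\rangle$ — that is, the rational map attached to the pencil of conics through $p_i,p_j,p_k,p_l$. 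Its base scheme is supported on $\{p_i,p_j,p_k,p_l\}$, and a short check (handling separately the configurations in which three or all four of these points are collinear, where the pencil of conics collapses to a pencil of lines or to a constant) shows that base scheme is reduced there, hence resolved by one blow-up at each point. Thus $\iota\circ F$, and so $F$, extends to a morphism on $\Bl_{p_1,\dots,p_n}\bP^2$; density of $U$ and the fact that $U$ maps into the closed subvariety $\oM_{0,n}$ force the whole source into $\oM_{0,n}$, giving the asserted morphism. Equivalently I would record the universal family directly: the trivial $\bP^1$-bundle over $\Bl_{p_1,\dots,p_n}\bP^2$ with sections $q\mapsto[p_i-q]$, stabilised in the total space — this is the form I would use for the boundary computation.

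\textbf{$F$ is a closed embedding.} Since the source is projective and $\oM_{0,n}$ separated, it suffices to show $F$ is injective on closed points and unramified, and I would deduce both, on $U$, from the no-conic hypothesis by one device. If $\pi_q(p_\bullet)$ and $\pi_{q'}(p_\bullet)$ differ by a projectivity $g$ of the common $\bP^1$ of directions, clearing denominators in $g(\pi_q(p_i))=\pi_{q'}(p_i)$ gives a polynomial $R$ of degree $\le2$ vanishing at every $p_i$; the degree-two part of $R$ is the form $\beta x_1^2+(\alpha-\delta)x_1x_2-\gamma x_2^2$ attached to $g$, which vanishes identically only if $g=\mathrm{id}$. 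Hence for $g\ne\mathrm{id}$, $R=0$ is a conic through all $p_i$, and for $g=\mathrm{id}$, $R$ is linear and all $p_i$ lie on $\langle q,q'\rangle$ — both contradict the hypothesis, so $F|_U$ is injective. Replacing $g$ by an infinitesimal automorphism $\xi\in\mathfrak{pgl}_2$ and repeating the computation shows a nonzero $v\in\ker dF_q$ likewise yields a nonzero degree-$\le2$ polynomial vanishing at all $p_i$, i.e.\ a conic, so $F|_U$ is unramified. Along the exceptional locus I would argue via a forgetful map $\oM_{0,n}\to\oM_{0,n-1}$, which presents $F$ as the blow-down of $E_m$ followed by the analogous map for $p_1,\dots,\widehat{p_m},\dots,p_n$; running this for several $m$ determines the fibres and the differential of $F$ on $\bigcup_iE_i\cup\bigcup_I\widetilde L_I$, and no new conics appear. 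So $F$ is proper, injective and unramified, hence a closed immersion.

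\textbf{Boundary pull-backs.} Here I would use the explicit universal family: $s\in F^{-1}(\delta_I)$ precisely when the stable curve over $s$ acquires a node separating $I$ from its complement; tracing the stabilisation, this occurs along $\widetilde L_I$ exactly when $I$ is the full set of indices of configuration points on a line $L_I$ (the twig there carrying the markings $I$), and along $E_k$ exactly when $p_k$ lies on a line whose index set $I$ has $|I|\ge3$ (the twig over $E_k$ carrying the markings $I\setminus\{k\}$), and nowhere else in codimension one. A local computation at a general point of $\widetilde L_I$, respectively $E_k$, shows the relevant node is smoothed at first order along that divisor, so $F^*\delta_I=\widetilde L_I$ and $F^*\delta_{I\setminus\{k\}}=E_k$ with multiplicity one, while every other $\delta_J$ pulls back to $0$. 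As a cross-check, one may instead pull back the three boundary points of each $\oM_{0,J}=\bP^1$ through the maps $[\ell_{ik}\ell_{jl}:\ell_{il}\ell_{jk}]$ — each such fibre is a pair of lines together with the exceptional divisors over the configuration points on them — and solve the resulting linear system in $\Pic(\Bl_{p_1,\dots,p_n}\bP^2)\cong\ZZ^{\,n+1}$.

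\textbf{Expected main obstacle.} The conceptual core is painless: $F$ can fail to be injective or an immersion only by producing a (possibly reducible) conic through $p_1,\dots,p_n$, which is exactly what the hypothesis forbids. What I expect to be laborious is the combinatorial bookkeeping for degenerate configurations — when several $p_i$ are collinear the cross-ratio maps degenerate, the base loci of the relevant pencils change, and the behaviour of $F$ along the exceptional curves together with the exact pull-back multiplicities must be verified configuration by configuration.
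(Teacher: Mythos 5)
Your extension step and your boundary pull-back computation follow essentially the paper's route (the $4$-point forgetful maps realised by pencils of conics $[\ell_{ik}\ell_{jl}:\ell_{il}\ell_{jk}]$, and a local stabilisation of the universal family along a transversal arc). Your argument that $F|_U$ is injective and unramified is genuinely different from the paper's and is correct: identifying a coincidence $F(q)=F(q')$ with a projectivity between the two pencils and clearing denominators is exactly Steiner's projective generation of conics, so all $p_i$ land on the (possibly reducible) Steiner conic; the infinitesimal version with $\xi\in\mathfrak{pgl}_2$ produces the equation $\alpha a^2+\beta ab+\gamma b^2-b=0$, which is never identically zero, so a nonzero kernel vector of $dF_q$ again forces a conic. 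This is cleaner than what the paper does on the interior and converts the hypothesis into the conclusion in one stroke.

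The genuine gap is your treatment of the boundary of the surface, i.e.\ injectivity and immersivity along $\bigcup_i E_i\cup\bigcup_I\tilde L_I$ and between such points and points of $U$ (note that an $E_m$ with no three of the $p_i$ collinear through $p_m$ maps into the \emph{interior} $M_{0,n}$, so these coincidences are not excluded by the boundary stratification). Your proposed device --- compose with a forgetful map $\oM_{0,n}\to\oM_{0,n-1}$ and invoke ``the analogous map for $p_1,\ldots,\widehat{p_m},\ldots,p_n$'', asserting that ``no new conics appear'' --- fails: deleting a point can very well leave the remaining $n-1$ points on a conic (take five of six points conconic), in which case the sub-configuration violates the hypothesis of the theorem and the induced map on $\Bl_{n-1}\bP^2$ is not a closed embedding, so the reduction gives you nothing. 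This is precisely where the paper's proof does its real work: it lets $k$ be the maximal number of the $p_i$ lying on a smooth conic, shows $k\ne 2,3$, and analyses the fibres of $F_{1\ldots k}$ (proper transforms of conics through $p_1,\ldots,p_k$, the remaining exceptional divisors, and points), exhibiting for each positive-dimensional fibre an auxiliary $4$-point cross-ratio map that separates its points --- with separate sub-arguments for smooth and reducible conics in the $k=4$ case. Either you need to import that case analysis, or you should push your Steiner argument to the boundary (the centres $q,q'$ of the two pencils also lie on the Steiner conic, so a coincidence between a point of $U$ and a direction at $p_m$, or between two directions, still forces all $n$ points plus the relevant centres onto a conic); but as written the boundary step is not a proof.
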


\begin{proof}
For any $I\subset\{1,\ldots,n\}$, we denote by $F_I:\,\bP^2\dra \oM_{0,I}$ a
rational map defined as above but using only points $p_i$, $i\in I$. Then $F_I=\pi_I\circ F$, for the forgetful map
$\pi_I:\MM_{0,n}\ra\MM_{0,I}$.

First suppose that $n=4$. Consider three cases.
If no three out of the four points $p_1,\ldots,p_4$ lie on a line then 
$$F:\,\Bl_{p_1,p_2,p_3,p_4}\bP^2\simeq\oM_{0,5}\to\oM_{0,4}\simeq\bP^1$$ 
is  given by the pencil of conics through $p_1,\ldots,p_4$.
If $p_1,p_2,p_3$ lie on a line that does not contain $p_4$ then 
$F:\,\Bl_{p_4}\bP^2\to\oM_{0,4}\simeq\bP^1$ is a projection from $p_4$.
Finally, if all points lie on a line then $F_{1234}$ is a map to a point (given by the cross-ratio of $p_1,\ldots,p_4$
on the line they span).  Note that in all cases $F$ is regular on $\Bl_{p_1,\ldots,p_n}\bP^2$.
The product of all forgetful maps $\oM_{0,n}\to\prod_I\oM_{0,I}$ over all $4$-element subsets
is a closed embedding (see e.g.~\cite[Th.~9.18]{HKT}).
It follows that \eqref{cutemorphism} is regular.

Now suppose that there is no conic passing through all points.

The argument above shows that $F$ restricted to each exceptional divisor $E_i$ is a closed immersion.
Indeed,  there always exist three points $p_a,p_b,p_c$ such that $p_i$ does not belong
to a line spanned by any two of the three points (otherwise all points belong to a union of two lines
passing through $p_i$, which is a reducible conic). 
By the above, the morphism $F_{abci}|_{E_i}$ is a closed immersion (in fact an isomorphism).

Let $k$ be the maximal number such that there exist $k$ points out of $p_1,\ldots,p_n$
lying on a smooth conic. We can assume without loss of generality that $p_1,\ldots,p_k$ lie on smooth conic.
We consider several cases. Suppose first that $k\ge5$.
Since, for any $4$-element subset $I\subset\{1,\ldots,k\}$, $F_I$ is given by a linear system of conics 
through $p_i$, $i\in I$, the geometric fibers of $F_{1\ldots k}$ are:
(1) the proper transform $\tilde C$ of a conic $C$ through $p_1,\ldots,p_k$ (which does not pass through the remaining points);
(2) exceptional divisors $E_i$, $i>k$; (3) closed points in $\bP^2\setminus\{p_1,\ldots,p_n\}$.
Since we already know that $F|_{E_i}$ is a closed embedding,
it suffices to prove that $F|_{\tilde C}$ is a closed embedding. 
For this, consider $F_{123,k+1}$. There are two subcases.
If $p_1,p_2,p_3,p_{k+1}$ lie on a smooth conic then, since $p_{k+1}\not\in C$, the linear system of
conics through $p_1,p_2,p_3,p_{k+1}$ separate points of $\tilde C$.
If they lie on a reducible conic then $p_{k+1}$ must belong to a line connecting
a pair of points from $p_1,p_2,p_3$, for example $p_2$ and $p_3$.
Then the linear system of lines through $p_1$ separate points of~$\tilde C$.
In both cases, $F|_{\tilde C}$ is a closed embedding.

Note that $k\ne 2$ (otherwise all points lie on a line through $p_1$ and $p_2$).
We claim that $k\ne 3$ either. Arguing by contradiction, suppose that $k=3$.
Then, for any $i>3$, $p_i$ lies on one of the three lines connecting $p_1$, $p_2$, $p_3$.
Moreover, each of these lines must contain at least one of the points $p_i$, $i>3$,
because otherwise all points lie on the union of two lines.
So suppose that 
$$
p_4\in\langle p_1,p_2\rangle,\quad
p_5\in\langle p_2,p_3\rangle,\quad
p_6\in\langle p_1,p_3\rangle.$$ 
But then $p_2,p_3,p_4,p_6$ lie on a smooth conic.

So the only case left is $k=4$. 
Points $p_5,\ldots,p_n$ lie on a union of $6$ lines
connecting $p_1,\ldots,p_4$ pairwise. 
The geometric fibers of $F_{1234}:\,\Bl_{p_1,\ldots,p_n}\bP^2\to\oM_{0,\{1,2,3,4\}}$
are the preimages w.r.t.~the morphism $\Bl_{p_1,\ldots,p_n}\bP^2\to\Bl_{p_1,\ldots,p_4}\bP^2$
of proper transforms of conics $C$ through $p_1,\ldots,p_4$.
If $C$ is a smooth conic then the argument from the $k\ge5$ case
shows that $F|_{\tilde C}$ is a closed embedding.
So suppose that $C$ is a reducible conic, for example the union of lines $\langle p_1,p_2\rangle$
and $\langle p_3,p_4\rangle$. 
Note that not all points belong to these two lines, for example suppose $p_5$ belongs to $\langle p_1,p_3\rangle$.
Then $F_{1352}$ collapses $\langle p_1,p_2\rangle$ and separates points of $\langle p_3,p_4\rangle$.
$F_{1354}$ has an opposite effect. So $F_{13524}$ separates points of $\tilde C$ and we are done.

To compute pull-backs of boundary divisors, note that $F^{-1}(\partial\oM_{0,n})=\partial U$
(set-theoretically), and so, for any subset $I$, $F^*\delta_I$ (as a Cartier divisor) is a linear combination
of proper transforms of lines $L_J=\langle p_j\rangle_{j\in J}$ and exceptional divisors~$E_i$.
In order to compute multiplicity of $F^*\delta_I$ at one of these divisors $D$,
we can argue as follows: suppose $C\subset\Bl_{p_1,\ldots,p_n}\bP^2$
is a proper curve intersecting $D$ transversally at a point $p\in C$ that does not belong to any other boundary component.
By the projection formula, the multiplicity is equal to the local intersection number of $F(C)$ with 
$\delta_I$ at $F(p)$. But this intersection number can be immediately computed 
from the pullback of the universal family of $\oM_{0,n}$ to $C$.
To implement this program, we consider two cases.
First, suppose that $D=L_J$.
Working locally on $\bA^2_{x,y}\subset\bP^2$, we can assume that $p=(x,y)$,
$D=(x)$, $C=(y)$, $J=\{1,\ldots,k\}$, $p_i=(x,y-b_i)$, $b_i\ne0$, for $i\le k$, and $p_i=(x-a_i,y-b_i)$, for $i>k$, where 
$a_i\ne0$, $b_i\ne 0$, and $a_i/b_i\ne a_j/b_j$. Then (locally near $p$) the pull-back of the universal family
of $\oM_{0,n}$ to the punctured neighborhood $U\subset C$ of $p$ has a chart $\Spec k[x,1/x,s]_{(x)}\to\Spec k[x,1/x]_{(x)}$
with sections $(x+sb_i)$ for $i\le k$ and $(x+sb_i-a_i)$ for $i>k$.
Closing up the family in $\Spec k[x,s]_{(x)}\to\Spec k[x]_{(x)}$ and blowing-up the origin
$(x,s)\in \Spec k[x,s]_{(x)}$
separates the first $k$ sections. The special fiber has two components,
with points marked by $J$ one component and points marked by $J^c$ on the other.
This proves the claim in the first case.

Secondly, suppose that $D=E_1$. We assume that $p_1=(x,y)\in\bA^2\subset\bP^2$.
We work on the chart $\Spec k[x,t]\subset\Bl_{p_1}\bA^2$ where $y=tx$.
Then $E_1=(x)$. We can assume that $p=(x,t)$, $C=(t)$, and that $p_i=(x-a_i, t-t_i)$ for $i>1$, where $a_i\ne0$, $t_i\ne0$.
Then (locally near $p$) the pull-back of the universal family
of $\oM_{0,n}$ to the punctured neighborhood $U\subset C$ of $p$ has a chart $\Spec k[x,1/x,s]_{(x)}\to\Spec k[x,1/x]_{(x)}$
with sections $s_1=(s)$, $s_i=(s-t_i-sxa_i^{-1})$ for $i>1$. 
We close-up in $\Spec k[x,s]_{(x)}\to\Spec k[x]_{(x)}$
and resolve the special fiber by blowing up points $(x,s-t_i)$ each time there is more than one point 
with the same slope $t_i$. This yields a family of stable curves with a special fiber that contains
(a) a ``main'' component with points marked by $1$ and by $i$ each time 
there is just one point with the slope~$t_i$; (b) one component (attached to the main component)
for each $t_i$ that repeats more than once marked by $j$ such that $t_i=t_j$.
This proves the claim in the second case.
\end{proof}

\begin{ex}
Applying this to $n=6$ gives a covering of $\MM_{0,6}$ by cubic surfaces. This is related to the fact that $\oM_{0,6}$
is a resolution of singularities of the Segre cubic threefold
$$\cS=\{(x_0:\ldots:x_5)\,|\,\sum x_i=\sum x_i^3=0\}\subset\bP^5.$$
Using the formula \cite[Rk.3.1]{HT} for the pull-back of the hyperplane section of~$\cS$,
it is easy to check that our blow-ups are pull-backs of hyperplane sections of~$\cS$.
This proves a well-known classical fact that moduli of cubic surfaces are generated by hyperplane sections of $\cS$
(the Cremona hexahedral equations, see \cite{Do}).
It deserves mentioning that one of the (non-general) blow-ups of $\PP^2$ in $6$ points 
embedded in $\MM_{0,6}$ this way is the ``Keel-Vermeire divisor'', 
see \cite[Section 9]{CT2}.
\end{ex}

We end this section with the following observation:
\begin{prop}\label{lines+exc}
In the set-up of Theorem \ref{blowupdescr}, the numerical classes 
of proper transforms of lines and exceptional divisors on the blow-up $\Bl_{p_1,\ldots,p_n}\PP^2$ are sums of $F$-curves. 
\end{prop}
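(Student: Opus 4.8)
The plan is to compute the pushforward to $\oM_{0,n}$ of the numerical classes of the relevant divisors on $\Bl_{p_1,\ldots,p_n}\PP^2$ and to identify them as effective combinations of $F$-curves, using the explicit pullback formulas for boundary divisors established in Theorem~\ref{blowupdescr}. Since $F:\Bl_{p_1,\ldots,p_n}\PP^2\to\oM_{0,n}$ is a closed embedding (under the no-conic hypothesis), the numerical class of a curve $C$ on the blow-up is determined by its intersection numbers $C\cdot\delta_I$ with all boundary divisors, which by the projection formula equal $C\cdot F^*\delta_I$. So the strategy reduces to a bookkeeping computation in the Picard group of $\Bl_{p_1,\ldots,p_n}\PP^2$, using $H$ (the hyperplane class) and the $E_i$, together with the known fact (cited in the introduction) that $\Pic\oM_{0,n}$ has a basis of boundary divisors, so that an effective curve class that pairs nonnegatively against a spanning set of $F$-curve duals and has the right total intersection numbers is forced to be a nonnegative combination of $F$-curves.

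The cleanest route, however, is probably a direct geometric one rather than pure intersection theory. First I would handle the two types of curves separately. For an exceptional divisor $E_i$: under $F$, the point $p_i$ is the "moving point" and $E_i\cong\PP^1$ maps into $\oM_{0,n}$ as the image of the $i$-th Kapranov projection near $p_i$; in fact $F_{abci}|_{E_i}$ is an isomorphism onto $\oM_{0,\{a,b,c,i\}}\cong\PP^1$ for a suitable triple, which already exhibits $E_i$ as (the class of) an $F$-curve of the form $F_{\{a\},\{b\},\{c\},\text{rest}}$ — one needs only to check that the remaining markings distribute into a single vertex, which follows from the local analysis in the proof of Theorem~\ref{blowupdescr} (the "main component" picks up all markings with distinct slopes, and coincident slopes produce a fixed attached tree, so the class is a single $F$-curve class, hence an $F$-curve). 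For a line $\tilde L_I$ with $I=\{i_1,\ldots,i_k\}$: the restriction $F|_{\tilde L_I}$ corresponds to varying the moving point along the line $L_I$ while all $p_i$ with $i\in I$ are collinear with it and all $p_j$ with $j\notin I$ are off it. One recognizes this as the curve in $\oM_{0,n}$ obtained from the family over $\oM_{0,\{*\}\cup I}$ (projecting the $k$ collinear points, which gives an $F$-curve-type family on a lower-dimensional $\oM_{0,k+1}$) by attaching the fixed markings in $I^c$ at fixed positions. By the product decomposition \eqref{product} and the fact that $F$-curves on boundary strata pull back to sums of $F$-curves, it suffices to break the "reduced" curve on the smaller moduli space; iterating, one is reduced to small $n$ where everything is an $F$-curve.

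Concretely, I expect the induction to run on $n$ (or on the number of markings "moving" nontrivially), peeling off markings via forgetful maps as in Lemma~\ref{negative}: if $\pi:\oM_{0,n}\to\oM_{0,n-1}$ forgets a marking $j$, then either $\pi$ contracts $F(D)$ — in which case $F(D)$ is a fiber of a forgetful map, hence an $F$-curve directly — or $\pi_*[F(D)]$ is again a proper transform of a line or exceptional divisor for the configuration $\{p_i\}_{i\ne j}$ in $\PP^2$ (since forgetting a point just drops it from the configuration), so by induction $\pi_*[F(D)]$ is a sum of $F$-curves, and one lifts this back using $\pi^{-1}\delta_J=\delta_J+\delta_{J\cup\{j\}}$ together with a fiber-class correction term that is itself an $F$-curve. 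The base cases $n\le 5$ are trivial since $\NE(\oM_{0,n})$ is generated by $F$-curves there.

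The main obstacle is the lifting step: when $\pi_*[F(D)]$ breaks as $\sum m_a [F_a]$, the lift $[F(D)]$ need not literally be $\sum m_a[\widetilde{F_a}]$ plus fiber classes on the nose — one must verify that the correction, governed by how the markings $j$ and the (possibly new) components interact, is again effective and supported on $F$-curves. This requires pinning down exactly which stable curve $F(D)$ degenerates to at the relevant boundary points, which is precisely what the two local computations at the end of the proof of Theorem~\ref{blowupdescr} supply; the bookkeeping of attaching $j$ either to the "main component" or to a bubble must be matched against the $F$-curve combinatorics. I would organize this by first treating exceptional divisors completely (they are single $F$-curves), then treating lines by the forgetful-map induction, carefully tracking the partition of markings at each degeneration. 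Once the class identity is established divisor-by-divisor, nonnegativity of the coefficients is automatic from the geometry (each piece really is an honest $F$-curve appearing with a nonnegative multiplicity).
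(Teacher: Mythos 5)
Your overall shape (exceptional divisors via forgetful maps, lines via the boundary product structure plus induction on $n$) is the same as the paper's, but two of your concrete steps are wrong or missing, and the third route you sketch has a gap you yourself flag. First, the claim that each exceptional divisor $E_i$ is a \emph{single} $F$-curve is false in general. Under $F$, the curve $E_i$ is a component of a fiber of the $i$-th forgetful map $\oM_{0,n}\to\oM_{0,n-1}$: the markings $j\ne i$ sit at the directions of the lines $\langle p_i,p_j\rangle$, so the ``main component'' carries one special point for each distinct direction, and there are typically far more than three of these. Such a fiber component is a sum of several $F$-curves (one degenerates the base point of $\oM_{0,n-1}$ to a $0$-dimensional stratum, or equivalently applies the explicit decomposition the paper records later as Claim~\ref{fiber comp}); it is a single $F$-curve only when the remaining $n-1$ points lie on just three lines through $p_i$. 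Second, in the line case you have the moving and fixed factors reversed. For $\tilde L_I$ sitting in $\delta_I\cong\oM_{0,|I|+1}\times\oM_{0,n-|I|+1}$, the cross-ratios of the collinear points $p_i$, $i\in I$, along $L_I$ do \emph{not} change as the projection center $x$ moves along $L_I$ (so that factor is a fiber component of a forgetful map, handled as in the $E_i$ case), whereas the projections of the points in $I^c$ from $x$ \emph{do} vary — that factor is the genuinely moving one. The key inductive step, which your proposal never states, is that this moving projection onto $\oM_{0,n-|I|+1}$ is again the proper transform of a line for the \emph{smaller} configuration obtained by deleting the points of $I$ and placing one new point at a general point of $L_I$; this is what makes the induction on $n$ close.

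Your alternative induction (peel off one marking at a time and lift $F$-curve decompositions back through $\pi^{-1}\delta_J=\delta_J+\delta_{J\cup\{j\}}$) is a genuinely different route, but as you acknowledge, the lifting step is exactly where the difficulty lives: an $F$-curve decomposition of $\pi_*[F(D)]$ does not determine $[F(D)]$ (the fiber-class correction is not canonical), and controlling it requires the same degeneration data you would need for the direct argument, so nothing is saved. Finally, note the easy reduction you omit: a general line has class $\tilde L_J+\sum_{j\in J}E_j$ for any line $L_J$ through at least two of the points, so only lines through at least two points and exceptional divisors need to be treated.
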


We will give an explicit example of how Prop. \ref{lines+exc} applies in \ref{example: sums of F-curves}.

\begin{proof}
We argue by induction on $n$. The proper transform of any line in $\Bl_{p_1,\ldots,p_n}\PP^2$
is linearly equivalent to the sum of exceptional divisors and the proper transform 
of a line passing through at least two points of $p_1,\ldots,p_n$, so it is enough to consider these
two cases.

Case I. The exceptional divisor over $p_i$ maps to a point by the $i$-th forgetful map $\oM_{0,n}\to\oM_{0,n-1}$.
Any irreducible component $C$ of any fiber of the forgetful map is easily seen to be a sum of $F$-curves:
if the corresponding irreducible component of the $(n-1)$-pointed stable rational curve has three distinguished points
then $C$ is an $F$-curve. However, any fiber can be degenerated to a fiber over a $0$-dimensional stratum of $\oM_{0,n-1}$.

Case II. Consider the proper transform of a line $L_I$ through at least $2$ points of $p_1,\ldots,p_n$. 
The corresponding curve of $\oM_{0,n}$ belongs to the boundary divisor~$\delta_I$, so it suffices
to show that its projections onto $\oM_{0,|I|+1}$ and $\oM_{0,n-|I|+1}$ are sums of $F$-curves.

The projection onto $\oM_{0,n-|I|+1}$ can be interpreted as follows: remove points indexed by $I$
from $\bP^2$ and place an extra point $p$ at a general point of $L_I$. Now repeat the 
construction of Theorem~\ref{blowupdescr} for this new configuration. 
By inductive assumption, the proper transform of the line
is the sum of $F$-curves on $\oM_{0,n-|I|+1}$.

The projection onto $\oM_{0,|I|+1}$ is immediate: forgetting the extra marking maps the curve
to a point of $\oM_{0,|I|}$ (given by cross-ratios of $p_i$, $i\in I$ along $L_I$).
So we are done as in Case~I.
\end{proof}

The next simplest curves in the surfaces $\Bl_{p_1,\ldots,p_n}\PP^2$ are proper transforms of conics through $5$ points. The following is an immediate
corollary of  Thm.~\ref{blowupdescr}:

\begin{cor}\label{class1}
In the set-up of Theorem \ref{blowupdescr}, assume the points $p_1,\ldots,p_5$ are in general position and the smooth conic $C$ passing through them contains no other points $p_i$, $i>5$. Then the proper transform $\tilde{C}\subset\oM_{0,n}$ of $C$ has the following intersections with boundary divisors:
for each line $L_I$, 
$$\delta_I\cdot\tilde C=2-|I\cap\{1,\ldots,5\}|,$$ and for each $k\in I$, 
$$\delta_{I\setminus\{k\}} \cdot\tilde C=\begin{cases}
1& \hbox{if}\ k\le5\cr
0 & \hbox{otherwise}.
\end{cases}$$
Other intersection numbers are trivial.
\end{cor}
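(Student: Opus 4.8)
The plan is to obtain both intersection formulas directly by pulling back the divisor formulas from Theorem~\ref{blowupdescr} and computing intersection numbers on the blow-up $\Bl_{p_1,\ldots,p_n}\PP^2$, where everything is elementary intersection theory of lines, conics, and exceptional divisors. Since the five points $p_1,\ldots,p_5$ are in general position, there is no (possibly reducible) conic through all $n$ points — indeed any conic containing $p_1,\ldots,p_5$ is the smooth conic $C$, and $C$ contains none of $p_6,\ldots,p_n$ — so Theorem~\ref{blowupdescr} applies and $F$ is a closed embedding with $F^*(\delta_I)=\tilde L_I$ for lines $L_I=\langle p_i\rangle_{i\in I}$, $F^*(\delta_{I\setminus\{k\}})=E_k$ for $k\in I$ with $|I|\ge 3$, and all remaining boundary divisors pulling back trivially. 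Hence $\delta_I\cdot\tilde C = \tilde L_I\cdot\tilde C$ on the blow-up, and $\delta_{I\setminus\{k\}}\cdot\tilde C = E_k\cdot\tilde C$, and there is nothing further to check for the ``other boundary divisors'' since they pull back to $0$.

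Next I would record the relevant classes in $\Pic(\Bl_{p_1,\ldots,p_n}\PP^2)$ in the standard basis $H, E_1,\ldots,E_n$ with $H^2=1$, $E_i^2=-1$, $H\cdot E_i=0$, $E_i\cdot E_j=0$ for $i\ne j$. The proper transform of the conic $C$ through $p_1,\ldots,p_5$ (and no other $p_i$) is $\tilde C = 2H - E_1 - E_2 - E_3 - E_4 - E_5$. The proper transform of the line $L_I$ is $\tilde L_I = H - \sum_{i\in I}E_i$. Then I compute
\[
\delta_I\cdot\tilde C = \tilde L_I\cdot\tilde C = \Bigl(H-\sum_{i\in I}E_i\Bigr)\cdot\Bigl(2H-\sum_{j=1}^5 E_j\Bigr) = 2 - |I\cap\{1,\ldots,5\}|,
\]
using $H^2=1$ and $E_i\cdot E_j=-\delta_{ij}$. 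For the exceptional divisors, $\delta_{I\setminus\{k\}}\cdot\tilde C = E_k\cdot\tilde C = E_k\cdot(2H - E_1-\cdots-E_5)$, which is $1$ if $k\le 5$ and $0$ if $k>5$, giving the stated case distinction (note this is independent of which $I\ni k$ one chooses, as it must be, since $\delta_{I\setminus\{k\}}$ depends only on the set $I\setminus\{k\}$ and its complement).

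There is essentially no obstacle here: the only points needing a word of justification are (i) that the hypotheses of Theorem~\ref{blowupdescr} are met, i.e.\ that no conic passes through all of $p_1,\ldots,p_n$ — which follows because the unique conic through the five general points $p_1,\ldots,p_5$ is $C$, and $C$ avoids $p_6,\ldots,p_n$ by hypothesis — and (ii) that $\tilde C$ really has class $2H-\sum_{i=1}^5 E_i$, i.e.\ that $C$ is smooth at each $p_i$ ($i\le 5$) and passes through none of the others, so the blow-up map separates it with multiplicity one at each of $E_1,\ldots,E_5$ and not at all elsewhere; this is immediate since $C$ is a smooth conic containing exactly $p_1,\ldots,p_5$. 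The remaining computation is the two-line intersection-number calculation above, and the assertion about other intersection numbers being trivial is exactly the ``other boundary divisors pull-back trivially'' clause of Theorem~\ref{blowupdescr}.
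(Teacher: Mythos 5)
Your proof is correct and is exactly the intended argument: the paper states this corollary without proof as an ``immediate corollary'' of Theorem~\ref{blowupdescr}, and your computation (pulling back $\delta_I$ to $\tilde L_I$ or $E_k$ via the theorem and then intersecting with $\tilde C = 2H-E_1-\cdots-E_5$ in $\Pic(\Bl_{p_1,\ldots,p_n}\PP^2)$) is the omitted verification. The two points you single out for justification --- that the hypotheses of Theorem~\ref{blowupdescr} hold and that $\tilde C$ has the stated class --- are precisely the right ones.
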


We analyze in detail an example of such a curve in Section \ref{HesseSection}. 


\section{The hypergraph construction}\label{hypergraph section}

\begin{defn}
Let $f:X\ra Y$ be a quasiprojective morphism of Noetherian schemes. The \emph{exceptional locus} $\Exc(f)$ is the complement to the union of points in $X$ isolated in their fibers. By  \cite[4.4.3]{EGA3}., $\Exc(f)$ is closed. 
\end{defn}

We use the following observation to construct rigid curves on $\MM_{0,n}$:

\begin{prop}\label{rigid}
If a curve $C\subset X$ is an irreducible component of the exceptional locus of a morphism $f: X\ra Z$, with $X$ and $Z$ projective varieties, then $C$ is rigid on $X$.
\end{prop}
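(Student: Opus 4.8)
The plan is to reduce the statement to Mumford's rigidity lemma, exactly as the authors hint at earlier in the paper when discussing the hypergraph construction. Suppose for contradiction that $C$ moves on $X$: there is a flat family $\pi: S \to B$ over a curve germ $(b_0 \in B)$ and a map $h: S \to X$ with $\dim h(S) = 2$ and $h(S_{b_0}) = C$ set-theoretically. After shrinking $B$ and normalizing, I may assume $B$ is a smooth affine curve and $S$ is a surface; I may also replace $S_{b_0}$ by the reduced fiber, so that $h(S_{b_0}) = C$.

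First I would consider the composite $g = f \circ h : S \to Z$. Since $C$ is contained in $\Exc(f)$, the curve $C$ is either contracted by $f$ to a point, or maps onto a curve in $Z$ whose general point has positive-dimensional fiber; in either case the image $f(C)$ is a proper closed subvariety of $Z$. The key point is that $C = h(S_{b_0})$ lies in a single fiber of the family only if $g$ contracts that fiber. The cleanest route: look at the Stein factorization, or directly argue that since $h(S)$ is a surface while $h(S_{b_0}) = C$ is a curve, the general fiber $S_b$ is also mapped by $h$ to a curve $C_b \subset X$, and these sweep out the surface $h(S)$. If some $C_b$ with $b \neq b_0$ were not contained in $\Exc(f)$, then $f(C_b)$ would be a curve dominating... — this needs care, so let me instead use the following standard packaging. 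The locus in $h(S)$ where $f$ has positive-dimensional fibers is closed; since $C = h(S_{b_0})$ is a component of $\Exc(f)$ and the $C_b$ specialize to (a subset containing) $C$ and cover a surface, one shows the surface $h(S)$ itself would have to lie in $\Exc(f)$, forcing $f(h(S))$ to be a curve or point. Then $g = f \circ h : S \to Z$ has image a curve or point, so $g$ contracts every fiber of $\pi$ — in particular it contracts the general fiber $S_b$, hence $h(S_b)$ maps to a point of $Z$.

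Now I apply Mumford's rigidity lemma \cite[p.43]{Mu} to the family $\pi : S \to B$ together with the map $h$ and the constraint that $g = f \circ h$ contracts the special fiber $S_{b_0}$ (which maps to the point $f(C)$, that being a point precisely when $C$ is contracted by $f$). The rigidity lemma says: if a proper morphism from a variety fibered over a base contracts one fiber under a second morphism, then it contracts all fibers, and the induced map factors through the base. Thus $h$ contracts every fiber $S_b$ to a point, so $h(S)$ is the image of $B$, a curve — contradicting $\dim h(S) = 2$. To cover the case where $C$ is not contracted by $f$ but is an embedded component of $\Exc(f)$ mapping onto a curve, I would first replace $Z$ by an appropriate cover or pass to the Stein factorization of $f$ restricted to a neighborhood of $C$, arranging that the relevant fiber is genuinely contracted; alternatively, one composes $f$ with projection to a suitable subvariety to force $C$ into a single fiber.

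The main obstacle I anticipate is precisely this last point: making rigorous the claim that "moving $C$ in a two-dimensional family inside $\Exc(f)$ forces the ambient surface into $\Exc(f)$, hence contracted (up to dimension one) by $f$". The subtlety is that $\Exc(f)$ can be reducible and $C$ is only one of its components, so a priori the deformed curves $C_b$ might wander into other components or even out of $\Exc(f)$ entirely while still, in the limit, recovering $C$. The fix is to use that $C$ is an \emph{irreducible component} of $\Exc(f)$: since $\Exc(f)$ is closed and $C_b \to C$, for $b$ near $b_0$ the curve $C_b$ meets $\Exc(f)$ in a nonempty set whose limit is all of $C$; combined with $\dim h(S) = 2$ and upper semicontinuity of fiber dimension for $f$, one forces $h(S) \subseteq \Exc(f)$. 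Once that is in hand, the rest is the rigidity lemma applied to a generically finite or contracting situation, which is routine.
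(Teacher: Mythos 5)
There is a genuine gap, in fact two. First, the central step --- that every fiber of $\pi$ is contracted by $f\circ h$, equivalently that $h(S)\subseteq\Exc(f)$ --- is precisely what you flag as ``the main obstacle,'' and your proposed fix does not close it. Upper semicontinuity of fiber dimension only gives that $\Exc(f)$ is closed; it does not prevent the nearby curves $C_b=h(S_b)$ from lying outside $\Exc(f)$ while still degenerating to $C$. The paper's mechanism is purely numerical and you never invoke it: all fibers of $\pi$ are numerically equivalent on $S$, and the special fiber is contracted by $g=f\circ h$ (note that a $1$-dimensional irreducible component $C$ of $\Exc(f)$ is automatically contracted by $f$, because the positive-dimensional fiber through a general point of $C$ is contained in $\Exc(f)$ and meets no other component, hence equals $C$ --- this also disposes of your worry about the ``non-contracted'' case and the Stein-factorization detour). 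Therefore $g^*A\cdot S_b=g^*A\cdot S_{b_0}=0$ for $A$ ample on $Z$, so every fiber is contracted by $g$, so each curve $h(S_b)$ lies in a single fiber of $f$ and hence in $\Exc(f)$, giving $h(S)\subseteq\Exc(f)$. Without an argument of this kind your proof does not get off the ground.

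Second, your concluding step misapplies the rigidity lemma by conflating $h$ with $g=f\circ h$: the lemma, applied to $g$, yields only that $g$ contracts all fibers, i.e.\ that $f(h(S))$ is at most a curve; it does not say that $h$ contracts fibers, so there is no contradiction with $\dim h(S)=2$ (indeed $h(S_{b_0})=C$ is a curve, not a point). In fact no rigidity lemma is needed at this stage: once $h(S)\subseteq\Exc(f)$, the irreducible surface $h(S)$ lies in an irreducible component of $\Exc(f)$ of dimension at least $2$ containing $C$, contradicting the hypothesis that $C$ is itself an irreducible component of $\Exc(f)$. This is exactly how the paper finishes.
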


\begin{proof}
Assume $C$ is not rigid, i.e., there is a family $\pi:S\ra B$ over a smooth curve $B$, a map 
$h: S\ra X$ such that $h(S)$ is a surface and for some fiber $F$ of $\pi$ we have (set-theretically) that $h(F)=C$. Since the fibers of $\pi$ are numerically equivalent on $S$, and as $F$ is contracted by $f\circ h$, it follows that if $A$ is some ample divisor on $Z$, then every fiber of $\pi$ intersects $f\circ h^{-1}(A)$ trivially. Hence,  every fiber of $\pi$ is contracted by $f$, i.e., contained in $\Exc(f)$. It folows that $h(S)\subseteq\Exc(f)$. As $C$ is an irreducible component of $\Exc(f)$, this is a contradiction. 
\end{proof}

One is left to find a morphism $f:\MM_{0,n}\ra Z$ as in Prop. \ref{rigid}. The most natural morphisms to consider are products of forgetful morphisms. We first make the following:

\begin{defn}\label{hypergraph}
A \emph{hypergraph} $\Gamma=\{\Gamma_1,\ldots,\Gamma_d\}$ on the set $N=\{1,\ldots, n\}$
is a collection of subsets of $N$, called \emph{hyperedges}, such that the following conditions are satisfied:
\begin{itemize}
\item Any subset $\Gamma_j$ has at least three elements;
\item Any $i\in N$ 
is contained in at least
two subsets~$\Gamma_j$.
\end{itemize}
\end{defn}

\begin{defn}
We call a \em{hypergraph morphism} the product of fogetful maps 
$$\pi_{\Ga}: \MM_{0,n}\ra\prod_{\al=1}^d \MM_{0,\Gamma_{\al}}.$$
\end{defn}

Definition \ref{hypergraph} generalizes the notion of \emph{hypertree} introduced in \cite{CT2} 
(this construction has first appeared in \cite{CT1}). Essentially, a hypergraph is the simplest structure that allows one to study exceptional loci of products of fogetful maps, by using Brill-Noether theory of certain reducible curves. The following are some of the constructions in \cite{CT2} in a slightly more general context.

\begin{defn}\label{valence}
Let $\Gamma=\{\Gamma_1,\ldots,\Gamma_d\}$ be a hypergraph. 
A curve $\Sigma$ is called a {\em hypergraph curve} associated to $\Ga$ if it has $d$ irreducible components $\Sigma_1,\ldots, \Sigma_d$, with $\Sigma_{\al}\cong\PP^1$, marked by $\Gamma_{\al}$ and glued at identical markings as a scheme-theoretic push-out:
at each singular point $i\in N$, $\Sigma$
is locally isomorphic to the union of coordinate axes in $\AA^{v_i}$, where $v_i$
is the valence of $i$, i.e., the number of subsets $\Gamma_{\al}$ that contain~$i$.
We consider $\Sigma$ as a marked curve (by indexing its singularities).
\end{defn}

\begin{review} {\bf Identifying $M_{0,n}$ with a space of maps $\Sigma\ra\PP^1$.}
If not all the $\Ga_{\al}$ are triples, hypergraph curves will have moduli, namely
$$M_\Gamma:=\prod_{j=1,\ldots,d}M_{0,\Gamma_j}.$$

We observe that  $M_{0,n}$ can be identified with the variety of morphisms 
$$f:\,\Sigma\to\PP^1$$
(modulo the free action of $PGL_2$),  that send singular points
$p_1,\ldots,p_n$ of $\Sigma$  to different points $q_1,\ldots,q_n\in \PP^1$. (Note that the point in $M_\Gamma$ corresponding to $\Sigma$ is determined by 
the hypergraph morphism $M_{0,n}\ra M_{\Ga}$.) 
This gives a morphism 
\begin{equation}\label{vmap}
v:\,M_{0,n}\to \Pic^{\underline{1}},\quad f\mapsto f^*\cO_{\PP}(1)
\end{equation} 
from $M_{0,n}$ to the (relative over $M_\Gamma$) Picard scheme 
$\Pic^{\underline{1}}$ of line bundles on $\Sigma$ of degree $1$ on each irreducible component.
\end{review}

\begin{review} {\bf The exceptional locus of a product of forgetful maps.}
As remarked in \cite[Rmk. 2.6]{CT2} most of the constructions in \cite[2.1]{CT2} hold in this more general context. For the reader's convenience, we recall the main construction. 

A linear system on the hypergraph curve $\Sigma$ is said to be \emph{admissible} if it is globally generated and the corresponding morphism $\Sigma\ra\PP^1$ sends the singular points of $\Sigma$ to distinct points. We define the Brill-Noether loci $W^r$ and $G^r$ as follows. The locus $W^r\subset\Pic^{\underline{1}}$ parametrizes line bundles $L\in\Pic^{\underline{1}}$ such that for each hypergraph curve $\Sigma$, the complete linear system $|L_{|\Sigma}|$ is admissible, and we have:
$$\h^0(\Sigma, L)\geq r+1.$$

The locus $G^r$ parametrizes admissible pencils on $\Sigma$ such that the corresponding line bundle is in $W^r$. We have natural forgetful maps 
$$v: G^r\ra W^r.$$
We refer the reader to \cite[Section 2]{CT2}  for the details. Note that $G^r$ and $W^r$ could possibly be empty for $r\geq2$. The key point in the construction is the following:
\end{review}

\begin{thm}\cite[Thm 2.4]{CT2}\label{Exc}
There is an isomorphism $G^1\cong M_{0,n}$ over $M_{\Ga}$ and the map $$v:M_{0,n}\cong G^1\ra \Pic^{\underline{1}}$$ has exceptional locus $G^2$. In particular, $G^2$ is contained in the exceptional locus of the morphism: 
$${\pi_{\Ga}}_{|M_{0,n}}: M_{0,n}\ra\prod_{\al=1}^d M_{0,\Gamma_{\al}}$$
\end{thm}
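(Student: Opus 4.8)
The plan is to identify $M_{0,n}$ with an open subset of the Brill--Noether locus $G^1$ of admissible pencils on hypergraph curves, and then realize the map $v$ of \eqref{vmap} as the natural forgetful map $G^1\to W^1\subset\Pic^{\underline1}$, whose exceptional locus I will show is precisely $G^2$. First I would recall the identification of $M_{0,n}$ with the space of morphisms $f:\Sigma\to\PP^1$ separating the singular points of $\Sigma$, modulo $PGL_2$. Given such an $f$, the line bundle $L=f^*\cO_\PP(1)$ lies in $\Pic^{\underline1}$, the pencil spanned by the two sections pulling back coordinates on $\PP^1$ is globally generated, and the corresponding morphism separates singularities — so $[L]\in W^1$ and the pair $(L,\text{pencil})\in G^1$. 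Conversely, an admissible pencil with $[L]\in W^1$ is exactly the data of such an $f$ (up to $PGL_2$ acting on the target), because a globally generated $g^1_{\underline1}$ is the same as a morphism to $\PP^1$ together with a choice of basis of $\HH^0$. This bijection is compatible with the family versions over $M_\Ga$, so it upgrades to an isomorphism of schemes $G^1\cong M_{0,n}$ over $M_\Ga$; here one uses that the point of $M_\Ga$ attached to $\Sigma$ is recovered from the hypergraph morphism, as noted in the excerpt.

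Next I would analyze when $v:G^1\to W^1$ fails to be a local isomorphism, i.e.\ compute its exceptional locus in the sense of the definition preceding Prop.~\ref{rigid}. The fibers of $v$ over a point $[L]\in W^1$ parametrize admissible pencils inside $|L|$; such a fiber is positive-dimensional exactly when $\h^0(\Sigma,L)\geq 3$, i.e.\ when $[L]\in W^2$ — because from a pencil one can, generically, produce a one-parameter family of sub-pencils of $\HH^0(\Sigma,L)$ once $\dim|L|\geq 2$, and admissibility is an open condition which is nonempty there. Conversely, if $\h^0(\Sigma,L)=2$ then $|L|$ is the unique pencil and the point is isolated in its fiber. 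Thus $\Exc(v)=v^{-1}(W^2)=G^2$ as sets, and since the excerpt only asserts a containment $G^2\subseteq\Exc(\pi_\Ga|_{M_{0,n}})$ I may work set-theoretically. The final containment then follows because $\pi_\Ga|_{M_{0,n}}$ factors (at least on the relevant locus, after composing with the map to $M_\Ga$) through $v$: the point of $M_\Ga$ is determined by the combinatorics, and the remaining data of the image of $f$ in $\prod M_{0,\Ga_\al}$ is recovered from $L$ restricted to each component, hence any point contracted by $v$ is contracted by $\pi_\Ga|_{M_{0,n}}$.

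Since this statement is quoted verbatim as \cite[Thm.~2.4]{CT2}, the cleanest writeup is to reduce to that reference. Concretely: I would note that the construction of $G^r$, $W^r$, the isomorphism $G^1\cong M_{0,n}$, and the identification $\Exc(v)=G^2$ are established in \cite[Section 2]{CT2} for hypertrees, and — following the remark \cite[Rmk.~2.6]{CT2} already cited in the excerpt — that the arguments there use only the scheme-theoretic local structure of $\Sigma$ (a union of $v_i$ coordinate axes at each node) together with the fact that each component is a $\PP^1$ marked by $\Ga_\al$; neither the hypertree axioms beyond Definition~\ref{hypergraph} nor any genericity is used. Hence the proof transfers mutatis mutandis. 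The only point requiring care — and the one I would spell out — is that every globally generated degree-$(\underline1)$ line bundle on $\Sigma$ that separates the nodes does come from a point of $M_{0,n}$, i.e.\ that admissibility is exactly the condition cutting out $M_{0,n}$ inside the naive Hom-space, so that the bijection is onto and not merely an injection; this is where one must check that no degeneration of the marked $\PP^1$-components is forced, using that the $\Ga_\al$ each have at least three elements.

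\medskip

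I expect the main obstacle to be the last point: verifying that $v$ is not just injective on fibers away from $G^2$ but that its exceptional locus is \emph{all} of $G^2$ — equivalently, that for every $[L]\in W^2$ there genuinely exists a one-parameter family of admissible pencils through a given one, rather than the admissible locus in $|L|$ being zero-dimensional for combinatorial reasons. Handling this requires showing that admissibility (global generation plus separation of the $\le n$ nodes) is a nonempty open condition on the $\PP^1$ of sub-pencils of a $3$-dimensional $\HH^0(\Sigma,L)$, which in turn uses the base-point-freeness analysis on each $\PP^1$-component from \cite[2.1]{CT2}. In the generality of an arbitrary hypergraph this is the one place where one must confirm that the cited arguments carry over without modification.
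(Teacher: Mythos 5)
Your proposal is correct and ultimately takes the same route as the paper, which offers no proof of this statement at all beyond the citation to \cite[Thm.~2.4]{CT2} together with the remark that the constructions of \cite[2.1]{CT2} extend from hypertrees to hypergraphs. Your supplementary sketch of the underlying argument --- morphisms $\Sigma\to\PP^1$ modulo $PGL_2$ are base-point-free pencils, the fiber of $v$ over $L\in W^2$ is a nonempty open (hence positive-dimensional) subset of the Grassmannian of pencils in $\HH^0(\Sigma,L)$, and $\pi_\Gamma$ factors through $v$ followed by the structure map $\Pic^{\underline{1}}\to M_\Gamma$ --- is consistent with the construction the paper recalls and correctly isolates the one point needing verification.
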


In contrast with the map $v$, it seems quite difficult to understand in general the full exceptional locus of the map $\pi_{\Ga}$. (An easy case is when all $\Gamma_{\al}$ contain the same index \cite[Thm 2.4]{CT2}.) In our quest for small exceptional loci, the least we can require is that  $W^2$ is small (for example a point).  First note that Theorem \ref{Exc} has the following:

\begin{cor}\label{Cor1}
Let $\Sigma$ be a hypergraph curve and let  $L\in W^2\setminus W^3$ be an admissible line bundle whose restriction to $\Sigma$ gives a morphism $f:\,\Sigma\to\PP^2$.
Let $U:=\PP^2\setminus f(\Sigma)$. 
\begin{itemize}
\item[(a) ] The geometric fiber of $v:\,G^1 \to W^1$ over $(\Sigma,L)\in W^2$
is isomorphic to~$U$.
Its geometric points correspond to morphisms 
$$\Sigma\to f(\Sigma)\arrow^{pr_x}\PP^1,$$ 
where $pr_x:\,\PP^2\dra\PP^1$ is a linear projection from $x\in U$.

\item[(b) ] If $W^2$ is a point (and $W^3$ is empty) then $U$ is the exceptional locus of $v$.
\end{itemize}
\end{cor}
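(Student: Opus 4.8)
The plan is to describe the fiber of $v$ over the fixed pair $(\Sigma,L)$ entirely in terms of the morphism $f\colon\Sigma\to\PP^2$ attached to the complete linear system $|L|$, and then to obtain (b) as a one-line consequence of (a) together with Theorem~\ref{Exc}.

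\emph{Reduction to pencils.} First I would unwind the definitions. A geometric point of the fiber $v^{-1}(\Sigma,L)$ is an admissible pencil on $\Sigma$ whose underlying line bundle is $L$; equivalently, under the identification $G^1\cong M_{0,n}$, it is a morphism $g\colon\Sigma\to\PP^1$ separating the singular points with $g^*\cO_{\PP^1}(1)\cong L$, taken modulo $\PGL_2$. Giving such a $g$ up to $\PGL_2$ is the same as giving the two-dimensional globally generated subspace $V\subseteq H^0(\Sigma,L)$ that it pulls back. Since $L\in W^2\setminus W^3$ we have $H^0(\Sigma,L)\cong k^3$, and the complete system realizes the admissible morphism $f\colon\Sigma\to\PP(H^0(\Sigma,L)^*)=\PP^2$, which maps each component $\Sigma_\al$ isomorphically onto a line $L_\al$, so $f(\Sigma)=\bigcup_\al L_\al$. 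A two-dimensional $V\subseteq H^0(\Sigma,L)$ is the same datum as a one-dimensional quotient, i.e.\ a point $x=x_V\in\PP(H^0(\Sigma,L)^*)=\PP^2$, and the inclusion $V\hookrightarrow H^0(\Sigma,L)$ exhibits the morphism attached to $V$ as the composite $\Sigma\arrow^{f}\PP^2\overset{\pr_x}{\dra}\PP^1$. This already produces the description of geometric points asserted in (a); what remains is to pin down which $x$ give admissible pencils.

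\emph{Identifying the locus.} The base locus of $V$ on $\Sigma$ is precisely $f^{-1}(x)$, so $V$ is globally generated exactly when $x\notin f(\Sigma)$, that is, when $x\in U$; and for such $x$ the projection $\pr_x$ restricts to an isomorphism on each $L_\al$, so $\pr_x\circ f$ again has degree $1$ on every component and its pullback of $\cO(1)$ is again $L$, i.e.\ $v(x_V)=(\Sigma,L)$. I would then upgrade the set-theoretic bijection $x\mapsto x_V$ to a scheme isomorphism $U\arrow^{\sim}v^{-1}(\Sigma,L)$ by writing down the universal pencil over $U$, obtained by projecting the constant family $\Sigma\times U$ away from the tautological center $x$. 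The one genuinely delicate point is the separation condition built into admissibility: $\pr_x\circ f$ sends $p_1,\dots,p_n$ to distinct points of $\PP^1$ if and only if $x$ lies off every secant line $\langle f(p_i),f(p_j)\rangle$. When $p_i$ and $p_j$ lie on a common component this secant is that component's line $L_\al\subseteq f(\Sigma)$, hence is automatically avoided by $x\in U$; so the real content is that the secants of the remaining pairs are also components of $f(\Sigma)$, equivalently that $\bigcup_{i\neq j}\langle f(p_i),f(p_j)\rangle\subseteq f(\Sigma)$. Establishing this from the incidence data of $\Ga$ is the step I expect to be the main obstacle, and it is here that the specific combinatorics of the configuration must enter; granting it, $v^{-1}(\Sigma,L)\cong U$ and (a) follows.

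\emph{Deducing (b).} By Theorem~\ref{Exc} the exceptional locus of $v$ is $G^2$, which by definition consists of the admissible pencils whose underlying line bundle lies in $W^2$; in other words $G^2=v^{-1}(W^2)$. If $W^2$ is the single point $(\Sigma,L)$ and $W^3=\emptyset$, then $G^2=v^{-1}(\Sigma,L)$, which is isomorphic to $U$ by part (a). Hence $U$ is exactly the exceptional locus of $v$, proving (b).
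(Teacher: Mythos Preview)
The paper offers no argument beyond declaring this an immediate consequence of Theorem~\ref{Exc}, and your unwinding of the definitions is precisely the intended one: points of the fiber $v^{-1}(\Sigma,L)$ are admissible pencils inside the three-dimensional $|L|$, hence correspond to centers of projection $x\in\PP(H^0(L)^*)=\PP^2$, with the associated map being $pr_x\circ f$. Your deduction of~(b) from~(a) via $G^2=v^{-1}(W^2)$ is likewise exactly the one-line step the paper has in mind.

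Where your proposal goes astray is in the expectation that the inclusion $\bigcup_{i\ne j}\langle f(p_i),f(p_j)\rangle\subseteq f(\Sigma)$ can be extracted from the combinatorics of~$\Gamma$. That inclusion holds if and only if every pair of vertices lies in a common hyperedge, and this fails already for the paper's principal example, the dual Hesse hypergraph of Section~\ref{HesseSection}: the vertices $m,n$ belong to no common~$\Gamma_\alpha$, so the line $\langle f(m),f(n)\rangle$ is not a component of $f(\Sigma)$. Thus the literal fiber $v^{-1}(\Sigma,L)$ is the complement in $\PP^2$ of \emph{all} lines through pairs $f(p_i),f(p_j)$---which is exactly the open set called $U$ in Theorem~\ref{blowupdescr}---and this is in general strictly smaller than $\PP^2\setminus f(\Sigma)$. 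So the obstacle you isolated is not a missing lemma to be proved but a minor imprecision in the corollary's stated~$U$; read $U$ as in Section~\ref{blow-ups section} and your argument for~(a) is complete, while the downstream applications (Proposition~\ref{propermap}, Theorem~\ref{HesseMain}) are unaffected since they concern the closure of a curve meeting this dense open subset.
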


For the remaining part of this section we assume that we have the setup of Cor.~\ref{Cor1}~(b), i.e., ~$W^2$ is a point and $W^3$ is empty. Let $m_0=p(W^2)$ and let $\Sigma$ be the fiber of the universal family of hypergraph curves over $m_0\in M_\Gamma$.
Let $p_1,\ldots,p_n\in\bP^2_k$ be the images of singular points of $\Sigma$ under 
the linear system $|W^2|$.

\begin{prop}\label{propermap}
In the setup of Cor.~\ref{Cor1} (b),  $U$ belongs to the exceptional locus of $\pi_\Gamma:\,M_{0,n}\to M_\Gamma$.
If, moreover,  points  $p_1,\ldots,p_5$ lie on a smooth conic~$C$, then
$C\cap U$ belongs to the exceptional locus of the morphism 
\begin{equation}\label{record}
\pi:=(\pi_\Gamma\times\pi_I)_{|M_{0,n}}:\,M_{0,n}\arrow M_\Gamma\times M_{0, I},
\end{equation}
where $I=\{1,2,3,4,5\}$.
If $C\cap U$ is an irreducible component of the exceptional locus of the morphism $\pi$, then $\overline{C\cap U}\subset\oM_{0,n}$ 
is a rigid curve on $\oM_{0,n}$. 
\end{prop}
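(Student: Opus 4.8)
The plan is to verify the three assertions of Proposition~\ref{propermap} in turn, using Theorem~\ref{Exc}, Corollary~\ref{Cor1}, and Theorem~\ref{blowupdescr}. For the first assertion, recall from Corollary~\ref{Cor1}(b) that $U$ is the exceptional locus of $v:\,G^1\cong M_{0,n}\ra W^1\subset\Pic^{\underline 1}$; since $\pi_\Gamma$ factors as $M_{0,n}\xrightarrow{v}\Pic^{\underline 1}\to M_\Gamma$ (the target $M_\Gamma$ being a quotient of $\Pic^{\underline 1}$ recording only the underlying curve $\Sigma$, not the line bundle), every point of $U$ is contracted by $v$ and hence by $\pi_\Gamma$, so $U\subseteq\Exc(\pi_\Gamma)$. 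Actually one needs only the inclusion $G^2\subset\Exc(\pi_\Gamma)$ from Theorem~\ref{Exc} together with $U\subseteq G^2$ (which holds because $U$ is the fiber of $v$ over the point $m_0$, hence lies in the exceptional locus $G^2$ of $v$ by Corollary~\ref{Cor1}(b)).

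For the second assertion, I would identify $\overline U$ inside $\oM_{0,n}$ with (an open subset of) the blow-up $\Bl_{p_1,\ldots,p_n}\PP^2$ via the embedding $F$ of Theorem~\ref{blowupdescr} — note the hypotheses of that theorem are met because the images $p_1,\ldots,p_n$ of the singular points of $\Sigma$ under the degree-$\underline 1$ linear system $|W^2|$ do not lie on a conic (if they did, $W^3$ would be nonempty, contrary to assumption; this is the one small point to spell out, using that a conic through all $p_i$ would give a section of the degree-$2$ bundle $L^{\otimes ?}$ — more precisely it forces $\h^0(L)\geq 4$). Under this identification, $C\cap U$ is the open part of the proper transform $\tilde C\subset\Bl_{p_1,\ldots,p_n}\PP^2$ of the smooth conic $C$ through $p_1,\ldots,p_5$. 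By Corollary~\ref{Cor1}(a), every point $x\in U$, in particular every point of $C\cap U$, parametrizes the map $\Sigma\to f(\Sigma)\xrightarrow{pr_x}\PP^1$; projecting from points of the \emph{conic} $C$ additionally imposes that $p_1,\ldots,p_5$ go to a fixed configuration on $\PP^1$ — indeed the cross-ratios of any four of $pr_x(p_1),\ldots,pr_x(p_5)$ are independent of $x\in C$ because five points on a conic and a sixth variable point on the same conic have constant cross-ratios (this is exactly the statement that $F_{1234}$ contracts $\tilde C$, as used inside the proof of Theorem~\ref{blowupdescr}). Hence $\pi_I$ contracts $C\cap U$ as well, so $C\cap U\subseteq\Exc(\pi_\Gamma\times\pi_I)=\Exc(\pi)$.

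The third assertion is then immediate from Proposition~\ref{rigid}: if $C\cap U$ is an \emph{irreducible component} of $\Exc(\pi)$, then its closure $\overline{C\cap U}$ in $\oM_{0,n}$ is an irreducible component of $\Exc(\pi)$ as a subset of the projective variety $\oM_{0,n}$ — here one must check that passing to closures does not merge $\overline{C\cap U}$ into a larger component of $\Exc(\pi)$, which follows because $\Exc(\pi)$ is closed (so $\overline{C\cap U}\subseteq\Exc(\pi)$) and $\overline{C\cap U}$, being irreducible of dimension $1$, cannot be properly contained in another irreducible component unless that component already met the open stratum $C\cap U$, contradicting the hypothesis. Applying Proposition~\ref{rigid} with $X=\oM_{0,n}$ and $Z=M_\Gamma\times M_{0,I}$ (or rather their natural compactifications $\oM_\Gamma\times\oM_{0,I}$, to which $\pi$ extends as a morphism of projective varieties) shows $\overline{C\cap U}$ is rigid on $\oM_{0,n}$.

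The main obstacle I anticipate is the bookkeeping in the second step: cleanly justifying that $p_1,\ldots,p_n$ admit no conic through them (so that Theorem~\ref{blowupdescr} applies and $\overline U\hookrightarrow\oM_{0,n}$ is really the blow-up) and that projection from points of $C$ holds $p_1,\ldots,p_5$ in a fixed cross-ratio configuration, i.e.\ that $\pi_I$ genuinely contracts $\tilde C$ — both are geometrically transparent but require invoking the precise statements of Corollary~\ref{Cor1} and the conic-pencil computation embedded in the proof of Theorem~\ref{blowupdescr}. Everything else is a formal application of Proposition~\ref{rigid}.
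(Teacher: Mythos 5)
Your argument follows the same route as the paper's: $U\subseteq\Exc(\pi_\Gamma)$ via Corollary~\ref{Cor1}(b) and Theorem~\ref{Exc}, then $C\cap U$ lies in a single fiber of $\pi_\Gamma\times\pi_I$ because projection of five conconic points from a variable sixth point of the conic has constant cross-ratios, and finally the closure is an irreducible component of the exceptional locus of the extended morphism $\tilde\pi$ on $\oM_{0,n}$, so Proposition~\ref{rigid} applies. One small caveat: your detour through Theorem~\ref{blowupdescr} (and the claim that emptiness of $W^3$ rules out a conic through all $n$ points, whose justification via $\h^0(L)\geq4$ is not right as stated) is unnecessary --- the paper avoids it entirely, and the only fact you actually need is that the fibers of $U\to M_{0,5}$ are the conics through $p_1,\ldots,p_5$, which stands on its own.
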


\begin{proof}
Clearly, $C\cap U$ is the exceptional locus for the map $U\to M_{0,5}$
given by projecting $p_1,\ldots,p_5$ from points of $U$. Hence, $\overline{C\cap U}$ is 
contained in the exceptional locus of the hypergraph morphism 
$$\tilde{\pi}:=\pi_\Gamma\times\pi_I:\MM_{0,n}\arrow \MM_\Gamma\times \MM_{0, I}.$$

Since $C\cap U$ is a component of $\Exc(\pi)=\Exc(\tilde{\pi})\cap M_{0,n}$, it follows that $\overline{C\cap U}$ must be an irreducible component of $\Exc(\tilde{\pi})$ and we are done by Prop.  
\ref{rigid}.
\end{proof}


\section{The dual Hesse configuration and a rigid curve on $\MM_{0,12}$.}\label{HesseSection}

It remains to find a hypergraph that satisfies the last condition of Prop.~\ref{propermap}.
At the very least we need $\Gamma$ such that $W^1$ has relative dimension~$0$.
By the Brill--Noether theory, the relative dimension of $W^1$ is at least 
$$g-2(g-d+1)=\dim M_{0,n}-\dim M_\Gamma,$$
where $g$ is the arithmetic genus of the associated hypergraph curve $\Sigma$. 

\begin{Review}
Consider the hypergraph of the {\em dual}\/ Hesse configuration (see Fig. 1).
We use the following enumeration of its hyperedges:
 $$\Ga_1=\{p, 1,b,\ga\},\quad \Ga_2=\{p,2,c,\be\},\quad \Ga_3=\{p,3,a,\al\}$$
$$\Ga_4=\{n, 2,a,\ga\},\quad \Ga_5=\{n,3,b,\be\},\quad\Ga_6=\{n,1,c,\al\}$$
$$\Ga_7=\{m, 1,2,3\},\quad \Ga_8=\{m,\al,\be,\ga\},\quad  \Ga_9=\{m,a,b,c\}$$
It has $d=9$ hyperedges with $4$ points on each hyperedge, with $12$ vertices.
Note that $g=16$ and the expected relative dimension of $W^1$ is $0$.

Let $\Ga$ be the hypergraph $\{\Ga_1,\ldots,\Ga_9,\Ga_0\}$ where:
$$\Ga_0=\{m,n,p,1,a\}$$
(this corresponds to adding a conic $C$ through $5$ points in Prop.~\ref{propermap}).
\end{Review}

\begin{thm}\label{HesseMain} 
The hypergraph morphism  
$$\pi:={\pi_{\Gamma}}_{|M_{0,12}}:M_{0,12}\ra M_{\Ga}=(M_{0,4})^9\times M_{0,5}$$ 
has a $1$-dimensional connected component in the closure of its exceptional locus in $M_{0,12}$.
This connected component is in fact irreducible and is 
the proper transform $C$ in $\Bl_{12}\bP^2$ of the conic in $\bP^2$
passing through $5$ points $\{m,n,p,1,a\}$ of the dual Hesse configuration.
\end{thm}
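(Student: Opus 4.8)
The plan is to establish that the proper transform $C$ of the conic through $\{m,n,p,1,a\}$ is \emph{exactly} a connected component of the closure of $\Exc(\pi)$ in $M_{0,12}$, and then conclude rigidity via Proposition~\ref{propermap}. First I would set up the combinatorics: with the enumeration given, $g=16$ and $d=10$ (the nine hyperedges of the dual Hesse configuration plus the conic hyperedge $\Ga_0$), and one checks the expected relative dimension of $W^1$ for the $\Ga_0$-augmented curve is still $0$, so Corollary~\ref{Cor1} and Proposition~\ref{propermap} are in force once we know $W^2$ is a point and $W^3$ is empty for the sub-hypergraph $\{\Ga_1,\dots,\Ga_9\}$. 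This last fact is the heart of the construction and I expect it to be verified by an explicit linear-algebra computation on the hypergraph curve $\Sigma$: the unique (up to scalar) $2$-dimensional space of sections of the relevant degree-$\underline 1$ line bundle corresponds precisely to the dual Hesse realization $p_1,\dots,p_{12}\in\bP^2$, i.e. the $12$ points projectively dual to the $9$ inflection points and $12$ lines of the Hesse pencil. I would cite the analysis of \cite{CT2} and the preprint \cite{CT1} for the hypertree/hypergraph Brill--Noether computation, reducing to checking that the dual Hesse point configuration is the one produced by $|W^2|$.

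Next, granting that setup, Proposition~\ref{propermap} already tells us $\overline{C\cap U}$ is rigid \emph{provided} $C\cap U$ is an irreducible component of $\Exc(\pi)$, where $\pi=(\pi_\Gamma\times\pi_I)|_{M_{0,n}}$ with $I=\{m,n,p,1,a\}$. So the real content of Theorem~\ref{HesseMain} beyond Proposition~\ref{propermap} is the dimension statement: that the closure of $\Exc(\pi)$ in $M_{0,12}$ has a $1$-dimensional connected component, and that this component is exactly $C$. For this I would argue as follows. By Corollary~\ref{Cor1}(b), the exceptional locus of $v\colon M_{0,n}\to W^1$ over the point $m_0$ is $U=\bP^2\setminus f(\Sigma)$; intersecting with the $\Ga_0$-condition (that $m,n,p,1,a$ go to a conic) cuts this down to $C\cap U$. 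The claim is that $\Exc(\pi)$ has no components of dimension $\ge 2$ meeting $M_{0,12}$ other than what sits over $m_0$, and that over $m_0$ the exceptional locus is precisely $C\cap U$ (not a surface). The first part follows because $W^1$ has relative dimension $0$ over $M_\Gamma$, so $\Exc(v)$ maps finitely to $W^1$ hence to $M_\Gamma$; the second part is where the "ad-hoc argument" lives — one must show the fiber direction contributes nothing extra, i.e. that $C\cap U$ does not move inside $\Exc(\pi)$ transversally to $M_\Gamma$.

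The main obstacle, as the authors themselves flag, is precisely this: showing that the exceptional locus of $\pi$ does not contain a $2$-dimensional component through the generic point of $C$. The $v$-map controls the $G^2$ locus, but $\Exc(\pi)$ can a priori be strictly larger than $\Exc(v)$, and a surface in $\Exc(\pi)$ containing $C$ would destroy the argument. I would handle this by a direct local analysis near a general point of $C$: using the blow-up description $\Bl_{12}\bP^2\hookrightarrow\oM_{0,12}$ from Theorem~\ref{blowupdescr}, one identifies the tangent directions at a general point $[q]\in C\cap U$ in which the forgetful images $\pi_{\Ga_\al}$ and $\pi_I$ all remain constant to first order, and shows this space of "vertical" tangent directions is exactly $1$-dimensional, spanned by the tangent to $C$. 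Concretely, moving the center of projection $q\in\bP^2$ in a direction not along $C$ changes the cross-ratio on at least one $\Ga_\al$ or on $I$; verifying this amounts to a rank computation for the differential of $\prod(\text{cross-ratio maps})$ at $q$, which by the dual Hesse incidences can be made explicit. Once the differential of $\pi$ has rank $\dim M_{0,12}-1$ at a general point of $C$, it follows that $C$ is an irreducible component of $\Exc(\pi)$ of the expected dimension, hence a connected component of its closure by properness, and Proposition~\ref{propermap} finishes the rigidity.

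Finally I would note the consistency checks that should accompany the argument: the intersection numbers of $C$ with the boundary $\delta_I$ are pinned down by Corollary~\ref{class1} (since $m,n,p,1,a$ are in general position and the conic through them contains no further $p_i$), and $C\cdot\psi_i$ or $C\cdot(-K_{\oM_{0,12}})$ can be computed from those — a useful sanity check against the lower bound $K_{\oM_{0,n}}\cdot C\ge n-6$ of Remark~\ref{lower bound}, here $n-6=6$, which any rational rigid curve must satisfy. Connectedness and irreducibility of the component then follow because $C$ is itself an irreducible curve and we have shown the whole $1$-dimensional part of $\overline{\Exc(\pi)}\cap M_{0,12}$ coincides with it.
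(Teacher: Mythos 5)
Your reduction of the theorem to showing that $C\cap U$ is an irreducible component of $\Exc(\pi)$, followed by an appeal to Proposition~\ref{propermap}, matches the paper's framing, and you correctly identify that step as the crux. But the method you propose for it --- computing the rank of $d\pi$ at a general point $[q]\in C\cap U$ and concluding from $\rank d\pi_{[q]}=\dim M_{0,12}-1$ that $C$ is a component of $\Exc(\pi)$ --- has a genuine gap. A corank-$1$ differential with kernel tangent to $C$ only shows that the fiber of $\pi$ through $[q]$ is $1$-dimensional at $[q]$; it does not rule out a $2$-dimensional component $S\subset\Exc(\pi)$ containing $C$ that dominates a \emph{curve} in $M_\Gamma\times M_{0,5}$, i.e.\ a one-parameter family of $1$-dimensional fibers degenerating to $C$. (Model case: $\AA^3\to\AA^3$, $(x,y,z)\mapsto(x,xy,z)$; along any line $\{x=0,z=z_0\}$ the differential has corank $1$ with kernel tangent to the fiber, yet the exceptional locus is the whole surface $\{x=0\}$.) What has to be controlled is the locus $\cZ$ in the target over which the fiber is positive-dimensional, locally at the dual Hesse point $m_0$.

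That is exactly what the paper does, and the way it does it shows that a first-order computation cannot succeed here. After checking via explicit cross-ratio formulas that all fibers of $\pi_\Gamma$ are at most $1$-dimensional (recovering the marked points from $1',m',a',b',p',n'$ and the $w_i$), the paper describes $\cZ$ near $m_0$ as the zero locus of $13$ polynomials $f_1,\dots,f_{13}$ in $(w_1,\dots,w_9,u,v)$, namely the coefficients of polynomials in the fiber parameter $t$ that must vanish identically. The Jacobian of the $f_i$ at $m_0$ has rank $9$ but involves only the $w_j$; the $(u,v)$-directions coming from the $M_{0,5}$ factor are killed only by the \emph{quadratic} terms of three combinations $g_1,g_2,g_3$, whose Hessians in $(u,v)$ are linearly independent. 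So the Zariski tangent space of $\cZ$ at $m_0$ is positive-dimensional, and second-order information is indispensable to see that $\cZ$ is nonetheless $0$-dimensional there. Your proposed ``rank computation for the differential of the cross-ratio maps'' is a first-order statement and therefore cannot close this gap. Two smaller points: the assertion that $\Exc(v)$ maps finitely to $W^1$ is not right ($G^2\to W^2$ has $2$-dimensional fibers isomorphic to $U$), and the paper's proof does not actually route through verifying that $W^2$ is a point and $W^3$ is empty --- it analyzes the fibers of $\pi_\Gamma$ directly.
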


\begin{proof}
Let $\rho$ be a closed point of $M_{0,12}=G^1$. Then $\rho$ gives rise to the morphism $\Sigma\to\bP^1$
and we let $x'=\rho(x)$ for any singular point $x$ of $\Sigma$.
Without loss of generality we can assume that 
$$1'=\infty,\quad m'=0,\quad a'=1,$$
and we let
$$b'=t,$$
where $t\in k$ is a parameter.

In these coordinates the morphism $\pi$ has the following form:
$$w_1=[p', 1',b',\ga'],\quad w_2=[p',2',c',\be'],\quad w_3=[p',3',a',\al'],$$
$$w_4=[n', \ga',a',2'],\quad w_5=[n',\be',b',3'],\quad w_6=[n',\al',c',1'],$$
$$w_7=[m', 1',2',3'],\quad w_8=[m',\ga',\be',\al'],\quad  w_9=[m',b',c',a'],$$
$$u=p',\quad v=n',$$
where $$[x,y,z,s]=\frac{(s-x)(y-z)}{(y-x)(s-z)}$$ is the cross-ratio
and $(u,v)$ are coordinates on $M_{0,5}$.

\begin{claim}
The natural morphism $M_{0,12}\to (M_{0,4})^9\times M_{0,\{1,m,a,b,p,n\}}$
is injective on closed points. In particular, $\pi_\Gamma$ has at most one-dimensional fibers.
\end{claim}

\begin{proof}
We will show how to recover all points $x'$ starting from $1'$, $m'$, $a'$, $b'$, $p'$, $n'$
and using    coordinates on $M_\Gamma$.  From the cross-ratio $w_9$ we find that:
$$c'=\frac{(w_9-1)t}{w_9t-1}.$$
From the cross-ratio $w_1$ we find that:
$$\ga'=\frac{w_1t-u}{w_1-1}.$$
From the cross-ratio $w_4$ we find that:
$$2'=\frac{-w_4v+v+\ga'(w_4-v)}{-w_4v+1+\ga'(w_4-1)}=\frac{-v(w_4-1)(w_1-1)+(w_4-v)(w_1t-u)}{(1-w_4v)(w_1-1)+(w_4-1)(w_1t-u)}.$$
For simplicity, we think of this as $2'=\frac{C}{D}$ where
\begin{equation}\label{C}
C=-v(w_4-1)(w_1-1)+(w_4-v)(w_1t-u),
\end{equation}
\begin{equation}\label{D}
D=(1-w_4v)(w_1-1)+(w_4-1)(w_1t-u).
\end{equation}
From the cross-ratio $w_6$ we find that:
$$\al'=\frac{w_6v-c'}{w_6-1}=\frac{w_6v(w_9t-1)-(w_9-1)t}{(w_6-1)(w_9t-1)}.$$
For simplicity, we think of this as $\al'=\frac{A}{B}$ where
\begin{equation}\label{A}
A=w_6v(w_9t-1)-(w_9-1)t,
\end{equation}
\begin{equation}\label{B}
B=(w_6-1)(w_9t-1).
\end{equation}
From the cross-ratio $w_7$ we find that:
$$3'=\frac{w_72'}{w_7-1}=\frac{w_7C}{(w_7-1)D}.$$
Finally, from the cross-ratio $w_8$ we find that:
$$\be'=\frac{M}{N}$$ where we denote:
\begin{equation}\label{M}
M=(1-w_8)(w_1t-u)A,
\end{equation}
\begin{equation}\label{N}
N=(w_1-1)A-w_8(w_1t-u)B.
\end{equation}
This shows the claim.
\end{proof}

\begin{lemma}
The locus in $M_{\Ga}$ where the fiber of the hypergraph map is positive dimensional 
is given by those points for which the following polynomials in $t$
with coefficients in $k[w_1,\ldots,w_9,u,v]$  
are identically zero:
\begin{equation}\label{C3}
(A-uB)[w_7C-(w_7-1)D]-w_3(A-B)[w_7C-u(w_7-1)D]=\\
f_1t^2+f_2t+f_3,
\end{equation}

$$
[w_7C-v(w_7-1)D](M-tN)-w_5(M-vN)[w_7C-t(w_7-1)D]\\
$$
\begin{equation}\label{C5}
=f_4t^4+f_5t^3+\ldots+f_8,
\end{equation}

$$
[(w_9t-1)C-(w_9-1)tD](M-uN)-w_2[(w_9t-1)M-(w_9-1)tN](C-uD)
$$\begin{equation}\label{C2}
=f_9t^4+f_{10}t^3+\ldots+f_{13},
\end{equation}
where $A,B,C,D,M,N$ are as in \eqref{C} -- \eqref{N}.
\end{lemma}

\begin{proof}
We get equations on $t$ by utilizing the cross-ratios not used in the proof of the previous Claim.
Namely, we get \eqref{C3} from the points $3'$, $p'$, $a'$, $\al'$ and $w_3$;
we get \eqref{C5} from the points $n'$, $3'$, $b'$, $\be'$ and $w_5$;
we get \eqref{C2} from the points $p'$, $2'$, $c'$, $\be'$ and $w_2$.
For example: we require that $[p', 3', a', \al']=\om_3$. This is equivalent to:
$$(\al'-p')(3'-a')=\om_3(3'-p')(\al'-a').$$ 

As $a'=1$, $p'=u$, $\al'=\frac{A}{B}$ and $3'=\frac{\om_7C}{(\om_7-1)D}$, this implies:
$$(A-uB)[w_7C-(w_7-1)D]-w_3(A-B)[w_7C-u(w_7-1)D]=0.$$

Note that $A, B, C, D$ are linear polynomials in $t$. Note that equality must hold for all $t$ (remember, we are looking for one-dimensional fibers of the map $\pi$).  This implies that the degree two polynomial in \eqref{C3} must be identically zero. 
\end{proof}

Let $m_0\in M_\Gamma$ be the point that corresponds to the dual Hesse configuration in $\bP^2$. It is not realizable over $\bR$, so we can give 
only its vague sketch, see Fig.~\ref{dHesse}.

\begin{figure}[htbp]
\includegraphics[width=4in]{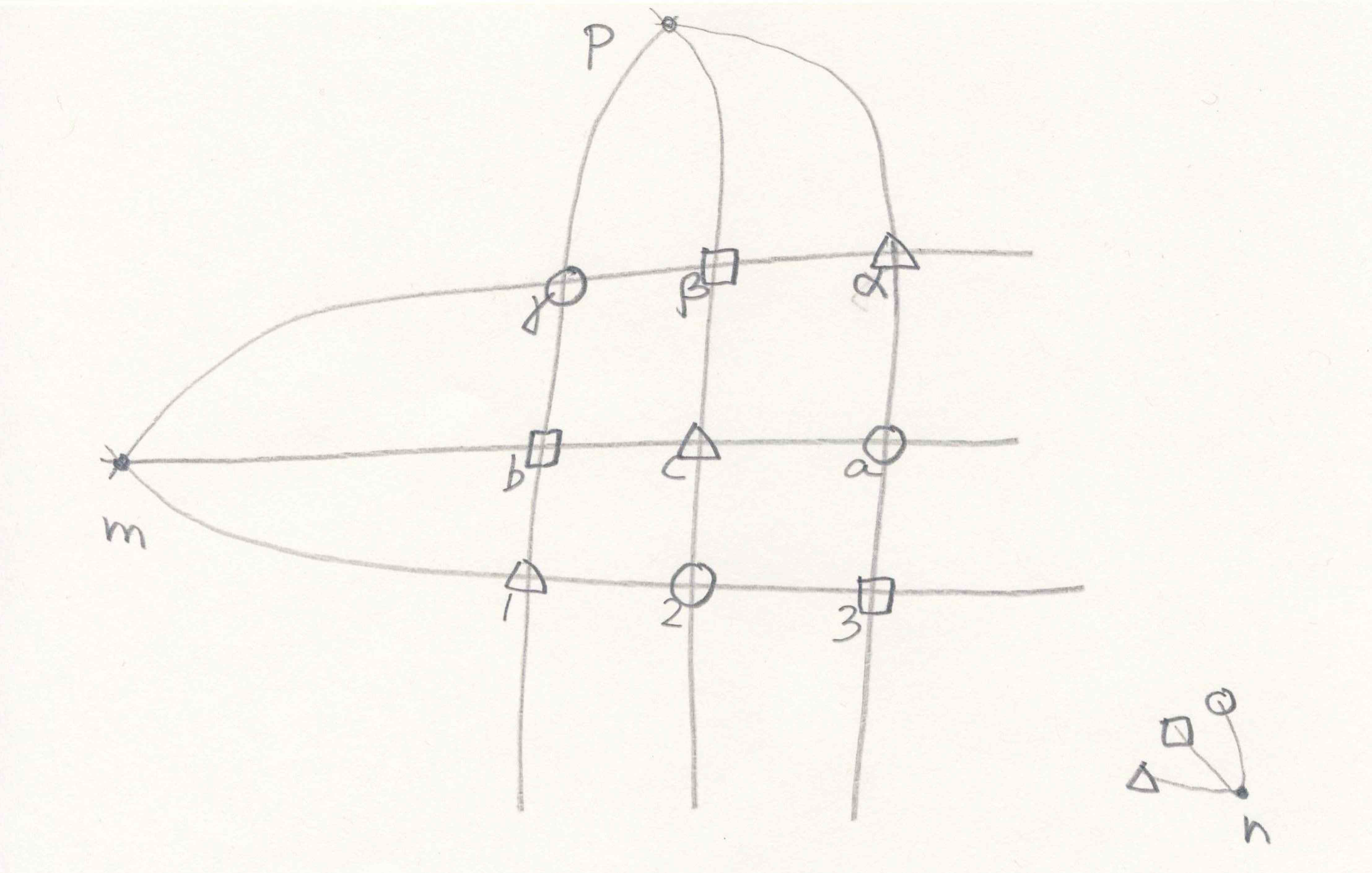}
\caption{\small A dual Hesse hypergraph.
}\label{dHesse}
\end{figure}
 
Note that ``circles'' (resp.~``squares'', resp.~``triangles'') span lines $\Ga_4$, $\Ga_5$, and $\Ga_6$.
Alternatively, one can choose coordinates in $\bP^2$ such that
$$\Ga_1\Ga_2\Ga_3=X^3-Z^3,\quad  \Ga_4\Ga_5\Ga_6=X^3-Y^3,
\quad \Ga_7\Ga_8\Ga_9=Y^3-Z^3.$$

\begin{lemma} Let $\omega$ be the primitive cubic root of~$1$. The point 
$m_0$ has coordinates 
$$w_1^0=\ldots=w_9^0=-\omega^2,\quad  u^0=1-\omega,\quad v^0=1-\omega^2.$$
The differentials of functions $f_1,\ldots,f_{13}$ at $m_0$ do not depend on $u$ and $v$
and the Jacobian matrix $[\partial f_i/\partial w_j]$ at $m_0$
is given by  
$$\left[\small\begin{array}{ccccccccccccccccccccccc}
0& 0& 0& 1& 0& \omega+1& -\omega-1& 0& \omega+1\cr
-1& 0& \omega& \omega& 0& -\omega-3& 2 \omega+3& 0& -\omega-1\cr
-\omega+1& 0& 0& 0& 0& -\omega+1& -\omega-2& 0& 0\cr
0& 0& 0& 1& -\omega-1& \omega& -\omega-1& 1& \omega\cr
0& 0& 0& 2 \omega-1& 3 \omega+4& -5 \omega-3& 3 \omega+5& 3 \omega-1& -3 \omega-1\cr
0& 0& 0& -5 \omega-1& -3 \omega-9& 8 \omega+10& -2 \omega-10& -9 \omega-3& 5 \omega+4\cr
0& 0& 0& 3 \omega+3& 9& -3 \omega-12& -3 \omega+9& 9 \omega+9& -3 \omega-3\cr
0& 0& 0& 0& 3 \omega-3& -3 \omega+3& 3 \omega-3& -3 \omega-6& 0\cr
0& 0& 0& -2 \omega& 0& 2& 0& -2 \omega-2& 2 \omega+2\cr
-2& 4 \omega+4& 0& 6 \omega+7& 0& 9 \omega-1& 0& -\omega+8& -\omega-6\cr
-7 \omega+1& -12& 0& \omega-7& 0& -20 \omega-16& 0& 15 \omega& -4 \omega+4\cr
12 \omega+9& 3-12 \omega& 0& -3 \omega& 0& 6 \omega+18& 0& -15 \omega-12& 3 \omega\cr
-3 \omega-6& 6 \omega+3& 0& 0& 0& 3 \omega-3& 0& 3 \omega+6& 0\cr
\end{array}\right]$$
It has rank~$9$ (rows $1,2,3,6,7,8,11,12,13$ are linearly independent). 
Consider the following functions:
$$g_1=45f_4+27f_5+(3-6\omega)f_6-(10\omega+5)f_7-(6\omega+3)f_8$$
$$g_2=-18f_4+(6\omega-6)f_5+6\omega f_6+(4\omega+2)f_7+(2\omega+2)f_8$$
$$g_3=126f_9+(63\omega+126)f_{10}+(105\omega+126)f_{11}+(161\omega+112)f_{12}+(189\omega+42)f_{13}$$
Their differentials at $m_0$ are identically $0$ and the Hessians
$$\left[\begin{matrix}{\partial^2g_k\over\partial u\partial u}&{\partial^2g_k\over\partial u\partial v}\cr
{\partial^2g_k\over\partial v\partial u}&{\partial^2g_k\over\partial v\partial v}\end{matrix}\right],\qquad k=1,2,3$$
at $m_0$ are equal to 
$$
\left[\begin{matrix}-18\omega-18&-30\omega-12\cr
-30\omega-12&-12\omega+54\end{matrix}\right],\quad
\left[\begin{matrix}4\omega+8&16\omega+8\cr
16\omega+8&16\omega-16\end{matrix}\right],\quad
\left[\begin{matrix}-126\omega+42&42\omega+84\cr
42\omega+84&42\omega+42\end{matrix}\right].$$
These three matrices are linearly independent.
\end{lemma}

\begin{proof}
This is a straightforward calculation and a joy of substitution.
\end{proof}

Now we can finish the proof of the Theorem.
It suffices to show that the scheme $\cZ$ cut out by 
the ideal $\langle f_1,\ldots,f_{13}\rangle$ is zero-dimensional at $m_0$.
This would follow at once if the tangent cone of $\cZ$ at $m_0$ is zero-dimensional.
By the Lemma, the ideal of the tangent cone contains functions $w_i-w_i^0$ (for $i=1,\ldots,9$),
$(u-u^0)^2$, $(u-u^0)(v-v^0)$, and $(v-v^0)^2$, which clearly cut out $m_0$ set-theoretically.
\end{proof}

\begin{rmk}
The dual Hesse configuration is a $q=3$ case of the $\Ceva(q)$ arrangement
with $3q$ lines that satisfy 
$$\Ga_1\ldots\Ga_q=X^q-Y^q,\quad \Ga_{q+1}\ldots\Ga_{2q}=Y^q-Z^q,\quad \Ga_{2q+1}\ldots\Ga_{3q}=Z^q-X^q.$$
We think it is plausible that these hypergraphs also give rise to $1$-dimensional exceptional
loci (on $\oM_{0,q^2+3}$).
\end{rmk}

\begin{notn}\label{formal}
We denote by $\De_I$ a formal curve class that has intersection $1$ with $\de_I$ and
$0$ with the rest of  boundary divisors.
\end{notn}

\begin{review}{\bf Class of $C$.}\label{Hesse curve class}
In the setup of Th.~\ref{HesseMain}, the numerical class of $C$ can be computed using Cor. \ref{class1}:
$$
\De_{1,b,\ga}+\De_{p, b,\ga}+
\De_{p,2,c,\be}+\De_{2,c,\be}+
\De_{3,a,\al}+\De_{p,3,\al}+
\De_{2,a,\ga}+\De_{n, 2,\ga}+
\De_{n,3,b,\be}$$
$$+\De_{3,b,\be}+
\De_{1,c,\al}+\De_{n,c,\al}+
\De_{1,2,3}+\De_{m, 2,3}+
\De_{m,\al,\be,\ga}+\De_{\al,\be,\ga}+
\De_{a,b,c}+\De_{m,b,c}$$
$$+\De_{1,\be}+2\De_{2,b}+2\De_{2,\al}+
2\De_{3,c}+2\De_{3,\ga}+\De_{a,\be}+
2\De_{b,\al}+2\De_{c,\ga}.$$
\end{review}


\section{The ``Two Conics"  construction}\label{deJ-K}

\begin{defn}
Recall that any configuration of lines $\{L_1,\ldots, L_k\}$ in $\PP^2$ has an associated {\em matroid}. 
This is a collection of subsets of the set $\{1,\ldots,k\}$ representing 
linearly independent subsets of the set of linear equations of lines $\{L_1,\ldots, L_k\}$.
We say that a configuration of distinct lines $\{L_1,\ldots, L_k\}$ in~$\PP^2$ is a \emph{rigid configuration} if any 
configuration of lines with the same matroid can be obtained from $\{L_1,\ldots, L_k\}$ via 
an automorphism of $\PP^2$. 
\end{defn}

\begin{review}{\bf The ``Two Conics" Construction.}
Let $\{L_1,\ldots, L_{n-3}\}$ be a rigid configuration of lines in $\PP^2$ and let 
$\{p_1,\ldots, p_k\}$ be the set of intersection points. Assume that there are two smooth, non-tangent, conics $C_1$ and $C_2$, each passing through five points in $\{p_1,\ldots, p_k\}$, with the  intersection $C_1\cap C_2$ containing exactly three points from $\{p_1,\ldots, p_k\}$. Let $p$ be the fourth point of intersection of $C_1$, $C_2$. For simplicity, we'll assume none of the lines is tangent to any of the conics. Let $\{p_{k+1},\ldots, p_l\}$ (for some $l>k$) be the points of intersection of  $C_1$, $C_2$ with the lines $L_1,\ldots, L_{n-3}$. 

The pencil of lines through $p$ gives a family of $n$-pointed rational curves as follows: Each line through $p$ is marked by the intersections with the lines $L_i$, the second intersection points with $C_1$ and $C_2$ and the point $p$ itself. More precisely, let $S_0$ be the blow-up of $\PP^2$ at $p$ and let $E_p$ be the exceptional divisor. Then $E_p$ together with the proper transforms of the  lines and conics give $n$ sections of the $\PP^1$-bundle $S_0\ra\PP^1$. These sections are pairwise transversal
and therefore can be disconnected by simple blow-ups as follows. 
Let $S$  be the blow-up of $S_0$ along $p_1,\ldots, p_l$ and 
the points $q_i:=\tilde{C}_i\cap E_p$, $i=1, 2$.  Let $E_i$ (resp., $E'_1, E'_2$) 
denote the exceptional divisors corresponding to the points $p_i$, $i=1,\ldots, l$ (resp., $q_1, q_2$). 
Since none of the conics is tangent to any of the lines, the proper transforms 
$$\tilde{L}_1,\ldots, \tilde{L}_{n-3}, \tilde{C}_1, \tilde{C}_2, \tilde{E}_p$$ form 
$n$ disjoint sections of the family $\pi: S\ra\PP^1$. 
\end{review}

\begin{notn}\label{map f} Let $f: \PP^1\ra\MM_{0,n}$
be the map induced by the family 
$$(\pi: S\ra\PP^1, \tilde{L}_1,\ldots, \tilde{L}_{n-3}, \tilde{C}_1, \tilde{C}_2, \tilde{E}_p).$$ 
We will denote by  $u, v$ the markings corresponding to $\tilde{C}_1, \tilde{C}_2$, i.e,  we have: $$\MM_{0,n}=\MM_{0,\{1,\ldots, n-3, u, v, p\}}.$$
Recall that the forgetful map $\MM_{0,n+1}\to \MM_{0,n}$
is the universal family. So we have 
$S\cong\PP^1\times_{\MM_{0,n}}\MM_{0,n+1}$. Let $h: S\ra\MM_{0,n+1}$ be the pull-back map.
\end{notn}

\begin{prop}\label{class2}
The maps $f: \PP^1\ra\MM_{0,n}$  and  $h: S\ra\MM_{0,n+1}$ of (\ref{map f}) are closed immersions. The boundary divisors of $\MM_{0,n+1}$ pull-back as follows: For each  point $p_j$ ($j=1,\ldots, l$) which is the intersection point of the lines and conics indexed by the subset 
$I\subseteq\{1,\ldots, n-3, u, v\},$ we have $h^*\de_{I\cup\{n+1\}}=E_j$. Moreover, 
$h^*\de_{\{u,p,n+1\}}=E'_1$ and $h^*\de_{\{v,p,n+1\}}=E'_2$. Other boundary divisors pull-back trivially.
\end{prop}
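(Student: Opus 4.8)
The plan is to identify the map $h\colon S\to\MM_{0,n+1}$ with a modification of the classifying map coming directly from Theorem~\ref{blowupdescr}, and then transport the closed-immersion statement and the boundary pull-back formula through this identification. First I would observe that, by construction, $S$ is obtained from $\PP^2$ by blowing up the point $p$, then the points $p_1,\ldots,p_l$ on $S_0=\Bl_p\PP^2$ lying on the proper transforms of the lines and conics, and finally the two points $q_i=\tilde C_i\cap E_p$; the $n$ disjoint sections $\tilde L_1,\ldots,\tilde L_{n-3},\tilde C_1,\tilde C_2,\tilde E_p$ are exactly the markings that give, fiberwise over $\PP^1$, an $n$-pointed stable rational curve. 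So $h$ is literally the map that to a line $\ell$ through $p$ associates $\ell$ with its $n$ marked points, together with the extra $(n+1)$-st point which is the ``moving point'' on $\ell$; this is the universal-family description already recorded in Notation~\ref{map f}.

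The key steps, in order, are: (1) Show $f$ is a closed immersion. For this I would apply the criterion used in the proof of Theorem~\ref{blowupdescr}: the product of forgetful maps to $\prod_I\MM_{0,I}$ over $4$-element subsets $I$ is a closed embedding, so it suffices to exhibit, for the pencil of lines through $p$, one $4$-element subset of markings whose associated cross-ratio function is a non-constant (hence injective, since source is $\PP^1$) map to $\MM_{0,4}\cong\PP^1$, and more generally enough such functions to separate points and tangent vectors. Taking the point $p$ and three of the points cut on a general line by $L_1,\ldots,L_{n-3}$ works: as the line rotates about $p$, these intersection points move with distinct non-constant rates, so the cross-ratio is non-constant; rigidity of the configuration guarantees the points are in the required general position. (2) Deduce $h$ is a closed immersion: since $h$ realizes $S$ as $\PP^1\times_{\MM_{0,n}}\MM_{0,n+1}$ and the forgetful map $\MM_{0,n+1}\to\MM_{0,n}$ is the universal curve, $h$ is a closed immersion as soon as $f$ is, because base change of the (separated, projective) universal family along the closed immersion $f$ gives a closed immersion of total spaces, and $S$ is exactly that fiber product. (3) Compute the boundary pull-backs. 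Here I would argue exactly as in the second half of the proof of Theorem~\ref{blowupdescr}: the preimage $h^{-1}(\partial\MM_{0,n+1})$ is set-theoretically the union of the $E_j$, $E_1'$, $E_2'$ and the fibers of $\pi$ over the special cross-ratio values, so each $h^*\delta_J$ is a combination of these; the multiplicity along $E_j$ (resp.\ $E_1',E_2'$) is computed by restricting the universal family to a curve meeting $E_j$ transversally at one point and reading off the stable reduction, which produces exactly one node separating the markings indexed by $I$ (the lines/conics through $p_j$) plus the moving point $n+1$ from the rest — giving $h^*\delta_{I\cup\{n+1\}}=E_j$, and analogously $h^*\delta_{\{u,p,n+1\}}=E_1'$, $h^*\delta_{\{v,p,n+1\}}=E_2'$; all other boundary divisors meet the image only along fibers of $\pi$, which are irreducible and not contracted, so they pull back to zero-multiplicity (trivially).

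I expect the main obstacle to be step (1), specifically verifying injectivity and unramifiedness of $f$ at the finitely many ``bad'' parameter values — the lines $\ell$ through $p$ that pass through one of the $p_j$, and the two lines that are the images of $\tilde C_1,\tilde C_2$ blown down, i.e.\ the values corresponding to $q_1,q_2$. At these parameters two of the $n$ marked points on $\ell$ collide before stabilization, so one must check that after stable reduction the resulting point of $\MM_{0,n}$ is still reached injectively and with the right tangent direction; the non-tangency hypotheses (no line tangent to a conic, the conics non-tangent to each other) are precisely what guarantees the relevant sections meet the special fibers transversally, so that a single blow-up separates them and no further collision occurs. Once this local transversality analysis is in place, both the immersion claims and the divisor computation follow formally from the universal-family identification, so I would organize the write-up to do the transversality bookkeeping once and reuse it for step (3).
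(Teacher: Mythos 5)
Your overall architecture matches the paper's: reduce the first claim to a single forgetful map to an $\MM_{0,4}$, deduce the statement for $h$ by base change along the universal family, and read off the boundary pull-backs from local stable reduction as in Theorem~\ref{blowupdescr}. Steps (2) and (3) are fine as sketched. But there is a genuine gap at the heart of step (1): the parenthetical ``non-constant (hence injective, since source is $\PP^1$)'' is false. A non-constant morphism $\PP^1\to\PP^1$ has some degree $d\ge 1$ and is injective only when $d=1$; establishing $d=1$ is precisely the content of the closed-immersion claim, and non-constancy of the cross-ratio does not give it. As it happens, for the subset you choose --- $p$ together with three lines $L_i$ not through $p$ --- the conclusion is true: writing the pencil as $Y=tX$ with $p=(0{:}0{:}1)$, the point $\ell_t\cap L_i$ has affine coordinate $-(a_i+b_it)/c_i$ on $\ell_t$ and $p$ sits at $\infty$, so the cross-ratio is a ratio of two functions linear in $t$, i.e.\ a M\"obius function of $t$, hence an isomorphism once non-constant (equivalently, once $L_1,L_2,L_3$ are not concurrent --- a hypothesis you should also record). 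You neither make this computation nor give the paper's version of it, which is to identify the $4$-pointed family $(\tilde L_1,\tilde L_2,\tilde L_3,\tilde E_p)$ over the pencil with the universal family $\MM_{0,5}\to\MM_{0,4}$ realized on the blow-up of $\PP^2$ at four general points fibered by the lines through one of them, so that the classifying map $\PP^1\to\MM_{0,4}$ is tautologically an isomorphism.

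Two smaller remarks. First, once the composite $\PP^1\to\MM_{0,n}\to\MM_{0,\{1,2,3,p\}}$ is a closed immersion, the cancellation property (the graph of $f$ is a base change of the diagonal of the separated forgetful map) gives that $f$ is a closed immersion outright; your worry about injectivity and unramifiedness at the ``bad'' parameter values is then superfluous for step (1), though the transversality bookkeeping you describe is exactly what is needed for the multiplicity computations in step (3). Second, your invocation of the embedding into $\prod_I\MM_{0,I}$ over all $4$-element subsets is a heavier tool than necessary and, combined with the false injectivity claim, does not by itself close the argument; either fix the degree computation for one well-chosen $I$ as above, or genuinely verify that a collection of cross-ratios separates points and tangent vectors.
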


\begin{proof}
We renumber the lines so that the lines $L_1, L_2, L_3$ do not pass through $p$. For the first claim, it suffices
to check that the composition $\bP^1\arrow^f\MM_{0,n}\to\MM_{1,2,3,p}$ 
(where the last map is the forgetful map) is an isomorphism. This follows from the fact that $\MM_{0,5}$
is isomorphic to the blow-up of $\bP^2$ in $4$ general points, say $p, p_1, p_2, p_3$, and
the forgetful map ($=$ universal family) $\MM_{0,5}\to\MM_{0,4}$ is obtained by choosing a pencil of lines through $p$.
The four sections are $E_p$ and the proper transforms of lines through $p_1$, $p_2$, and $p_3$.
Our construction gives the same family. The claim about pull-backs of boundary divisors follows by definition
of the boundary divisors (see also similar Theorem~\ref{blowupdescr}).
\end{proof}

\begin{thm}\label{rigid map}
The map $f: \PP^1\ra\MM_{0,n}$ is rigid.
\end{thm}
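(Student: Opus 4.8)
The plan is to deduce rigidity of $f\colon\PP^1\to\MM_{0,n}$ from the earlier machinery, specifically by exhibiting $f(\PP^1)$ as a component of the exceptional locus of a natural product of forgetful maps, and then invoking Proposition~\ref{rigid}. The starting observation is the one underlying the ``two conics'' construction: the family $\pi\colon S\to\PP^1$ was built by a pencil of lines through the point $p$, so every forgetful map that remembers only markings lying on lines and conics through $p$ contracts all the fibers $\tilde L_i$, $\tilde C_1$, $\tilde C_2$, $\tilde E_p$ (each is a section, hence a $(-1)$-curve meeting a fiber in one point, but more to the point, forgetting everything except points collinear with $p$ kills the cross-ratios). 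The key combinatorial input is that $S$ itself is, via $h$, a surface in $\MM_{0,n+1}$ (Prop.~\ref{class2}), and $h$ contracts the curves lying over the base $\PP^1$.

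First I would identify the right contraction. Consider the product of forgetful maps $\MM_{0,n}\to\prod_\alpha\MM_{0,\Gamma_\alpha}$ where the $\Gamma_\alpha$ run over (a) for each line $L_i$ not through $p$, the set of markings of points on $L_i$ together with two extra reference markings, and (b) for $C_1$ and $C_2$ the five-point subsets they carry, and (c) the subset $\{j,k,\ell,p\}$ where $j,k,\ell$ are the three common points of $C_1,C_2$. The content of the ``two conics'' setup — rigidity of the line configuration, non-tangency, the three-point intersection of the conics — is exactly what guarantees that the only positive-dimensional fiber of this map meeting the image of $f$ is $f(\PP^1)$ itself. Concretely, I would argue: a point of $\MM_{0,n}$ in a fiber over the image of a point of $\PP^1$ determines, after normalizing by $\PGL_2$, the positions of all $n$ marked points on a chain/$\PP^1$ subject to constraints forcing them to lie on the corresponding lines and conics; rigidity of the configuration (up to $\Aut\PP^2$) then forces the configuration back, and the residual freedom is exactly the one-parameter pencil, i.e.\ $f(\PP^1)$.

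Next, having shown $f(\PP^1)$ is an irreducible component of $\Exc$ of a morphism $\MM_{0,n}\to Z$ with $Z$ projective (a product of copies of $\MM_{0,4}$ and $\MM_{0,5}$), Proposition~\ref{rigid} gives immediately that $\overline{f(\PP^1)}=f(\PP^1)\subset\MM_{0,n}$ is rigid as a \emph{curve}. Since $f$ is a closed immersion by Prop.~\ref{class2}, the curve $f(\PP^1)$ being rigid is the same as the map $f$ being rigid in the strong sense of Def.~\ref{rigid def}: any family $\pi'\colon S'\to B$ with $h'\colon S'\to\MM_{0,n}$, $\dim h'(S')=2$, $h'(S'_{b_0})=f(\PP^1)$ set-theoretically is excluded by rigidity of $f(\PP^1)$, and conversely $f$ rigid as a map is weaker, so there is nothing more to check.

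The main obstacle is the middle step: showing that $f(\PP^1)$ is an \emph{isolated} (one-dimensional) component of the exceptional locus, rather than sitting inside a larger contracted family. This is where the hypotheses (rigidity of the matroid, the precise intersection pattern $|C_1\cap C_2\cap\{p_i\}|=3$, non-tangency everywhere) must all be used, and it parallels the analogous — and genuinely delicate — analysis in Theorem~\ref{HesseMain} for the Hesse curve. I expect the cleanest route is to pick out finitely many forgetful factors (analogues of the cross-ratios $w_1,\dots,w_9,u,v$ there) so that the fiber of the combined map is cut out, away from $f(\PP^1)$, by equations whose common zero locus is zero-dimensional; rigidity of the line configuration is precisely the statement that the $L_i$-part of these equations has no extra solutions, and the two conics with their prescribed incidences pin down the remaining coordinates. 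Once that finiteness is in hand, the rest is formal.
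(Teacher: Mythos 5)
Your strategy is genuinely different from the paper's, and it contains a real gap at its central step. You propose to prove the \emph{stronger} statement that $f(\PP^1)$ is a rigid curve in the sense of Definition~\ref{rigid def}, by exhibiting it as an irreducible component of the exceptional locus of a product of forgetful maps and invoking Proposition~\ref{rigid}. But the step that would make this work --- that the fiber of your chosen product of forgetful maps through a point of $f(\PP^1)$ is exactly one-dimensional, and that no larger contracted family contains $f(\PP^1)$ --- is precisely the step you defer (``I expect the cleanest route is\dots Once that finiteness is in hand, the rest is formal''). This is not a formality: the paper states explicitly that deciding when such an exceptional locus has a one-dimensional component is difficult in general and is carried out in only one example (the dual Hesse configuration, Theorem~\ref{HesseMain}), by an ad-hoc Jacobian/Hessian computation at a single point of $M_\Gamma$. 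Moreover, your appeal to rigidity of the line configuration does not directly control the fiber: a point of $M_{0,n}$ in the same fiber is a priori just a marked rational curve with prescribed cross-ratios, and identifying it with a planar configuration requires a Brill--Noether analysis of the type in Section~\ref{hypergraph section} (the loci $W^2$, $W^3$); that machinery is set up for hypergraph curves built from lines and does not apply verbatim when two of the marked sections come from conics. Note also that Theorem~\ref{rigid map} only claims rigidity of the \emph{map}, the weaker notion from the introduction; the paper nowhere asserts that the Two Conics curve is a rigid curve, and Remark~\ref{K2} ($K\cdot C=6$, Hom-scheme bound exactly $3$) shows the margin is too thin for this to be obvious.

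The paper's actual proof avoids all of this by working directly with the definition of a family of maps. Given $\Pi:\cC\to T$ and $F:\cC\to\MM_{0,n}$ restricting to $f$ at $t_0$, it pulls back the universal family to get a deformation $\cU\to T$ of the surface $S$; by the classification of deformations of blow-ups of rational surfaces \cite[Prop.~IV.(3.1)]{BPV}, each $\cU_t$ is again a blow-up of $\PP^2$ at points $p^t,p_j^t,q_i^t$ deforming the original ones, with the exceptional divisors moving in flat families. Constancy of the intersection numbers $\sigma_i^t\cdot\cE_j^t$ and of $(\sigma_i^t)^2$ forces the images $L_i^t$, $C_1^t$, $C_2^t$ to remain lines and conics with the same incidence pattern; rigidity of the configuration then makes the family isotrivial. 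If you want to salvage your approach, you would need to either carry out the finiteness computation for this specific configuration (in the spirit of Theorem~\ref{HesseMain}, but with the added complication of the conic sections), or accept that only map-rigidity is accessible and switch to the deformation-theoretic argument.
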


\begin{proof}
Assume that there is a smooth curve germ $(t_0\in T)$ and 
maps $\Pi: \cC\ra T$, $F:\cC\ra\MM_{0,n}$ such that we have: 
$$(F_{t_0}: \cC_{t_0}\ra\MM_{0,n})\cong (f:\PP^1\ra\MM_{0,n}).$$ 

Let $\cU\ra\cC$ be the pull-back of the universal family of $n$-pointed stable curves over $\cC$,  with sections $\sigma_1,\ldots, \sigma_{n-3}, \sigma_u, \sigma_v, \sigma_p:\cC\ra\cU$.
The family $\cU\ra T$ gives a deformation of the surface $S$ and we may assume (by shrinking $T$)
that $\cU$ is smooth over $T$. By applying repeatedly \cite[Prop. IV. (3.1), p. 121]{BPV}, it follows 
(after shrinking $T$) that the surface $\cU_t$ is a blow-up of $\PP^2$, with the exceptional divisors fitting in a flat family over $T$. More precisely, for every $t\in T$ the surface $\cU_t$ is a blow-up of $\PP^2$ along distinct points $p^t$, $p_j^t$ ($j=1\ldots, l$) and two infinitely near points $q_1^t$, $q_2^t$, such for $t=t_0$ this coincides with our initial configuration
$$p^{t_0}=p, \quad p_j^{t_0}=p_j, \quad q_i^{t_0}=q_i.$$
Moreover, there are divisors $\cE_1,\ldots, \cE_l, \cE_p, \cE'_1,\cE'_2$ in $\cU$ such that for each $t\in T$, 
$$\cE_j^t=\cE_j\cap\cU_t,\quad  \cE_p^t=\cE_p\cap\cU_t,\quad {\cE'}_i^t=\cE'_i\cap\cU_t$$
are the exceptional divisors corresponding to the points $p^t$, $p_j^t$, $q_i^t$.

For each $t\in T$, let $L_i^t$, $C_1^t$, $C_2^t$ be the images in $\PP^2$ of the sections $\sigma_i^t$, $\sigma_u^t$, $\sigma_v^t$. The intersection numbers $(\sigma_i^t.\cE_j^t)$,
$(\sigma_u^t.\cE_j^t)$, $(\sigma_v^t.\cE_j^t)$ do not depend on $t$, hence, each of the curves 
$L_i^t$, $C_1^t$, $C_2^t$ contains the point $p_j^t$ if and only if this happens for $t=t_0$. (Moreover, the multiplicity is $1$ if this happens.) Moreover, as $\cO(\sigma_i)$ is flat over $T$,  the self-intersection number $(\sigma_i^t)^2$ is constant in the family and it follows that  $L_i^t$ is a line and $C_1^t$ and $C_2^t$ are conics. It is clear now that 
 the lines $L_1^t,\ldots, L_{n-3}^t$ have the same incidence, for all $t\in T$. 
\end{proof}



\section{Arithmetic break of a hypergraph curve}\label{break of hypergraph curve}

We will show how the rigid curve $C$ constructed in Section \ref{HesseSection}, breaks in 
characteristic $3$ into several components. We compute the numerical classes of these components and use this to prove that the class of $C$ is a sum of $F$-curves. We keep the notations from Section \ref{HesseSection}.
Starting with this section, all schemes will be $\bZ$-schemes (including $\MM_{0,n}$).

\Trick{Set-up.} 

Let $\om\in\bC$ be a primitive cubic root of $1$ and let $R=\ZZ[\om]$ 
be the ring of Eisenstein integers. Let $K=\QQ[\om]$.
The Hesse configuration is defined over $K$ and we can choose coordinates $X,Y,Z$ in $\PP^2$ such that the $12$ points have coordinates:
$$m=(1,0,0), \quad n=(0,0,1), \quad p=(0,1,0),$$
$$a=(\om,\om, 1), \quad b=(1,\om,1), \quad c=(\om^2,\om,1),$$
$$1=(1,\om,1),\quad 2=(\om^2,\om^2,1), \quad 3=(\om,\om^2,1),$$
$$\al=(\om,1,1),\quad \be=(\om^2,1,1), \quad \ga=(1,1,1).$$
We view these points as sections of $\bP^2_R$.

Let $\cC$ denote the smooth conic bundle (over $R$) 
$$\om XY+ \om XZ+ YZ=0.$$
It contains sections $m,n,p,1,a$.
Note that $\cC$ has a parametrization given by:  
$$\PP^1_R\cong\cC,\quad (u,v)\mapsto (\om u^2+uv, \om uv+v^2, -\om uv).$$

Our curve $C$ is the characteristic $0$ fiber of $\cC$ (base-changed to $\CC$).

\Trick{Break of the curve $C$ in characteristic $3$ (outline).}
Consider the characteristic $3$ fiber $\PP^1_{\FF_3}$ of $\PP^1_R\ra\Spec R$ at $(\om-1)\in\Spec R$.
Note that at this fiber sections $a,b,c,1,2,3,\al,\be,\ga$ pass through $(1,1,1)$.
Consider the rational map: 
$$\PP^1_R\cong\cC\dra\M_{0,12}.$$
In order to resolve this map, one has to blow-up the arithmetic surface 
$\PP^1_R$ several times along $\PP^1_{\FF_3}$.
We now outline the strategy. First, we blow-up $\PP^1_R$ at the point $u=v=1$ in the fiber $\PP^1_{\FF_3}$. Let the corresponding exceptional divisor be $E_1$ and let $F$ denote the proper transform of $\PP^1_{\FF_3}$. We blow-up one more point on $E_1$, resulting in an exceptional curve $E_2$. We also blow-up the intersection point of $E_1$ and $F$ and let $E_3$ be the exceptional curve (see Fig.~\ref{pic_a}).
\begin{figure}[htbp]
\includegraphics[width=4in]{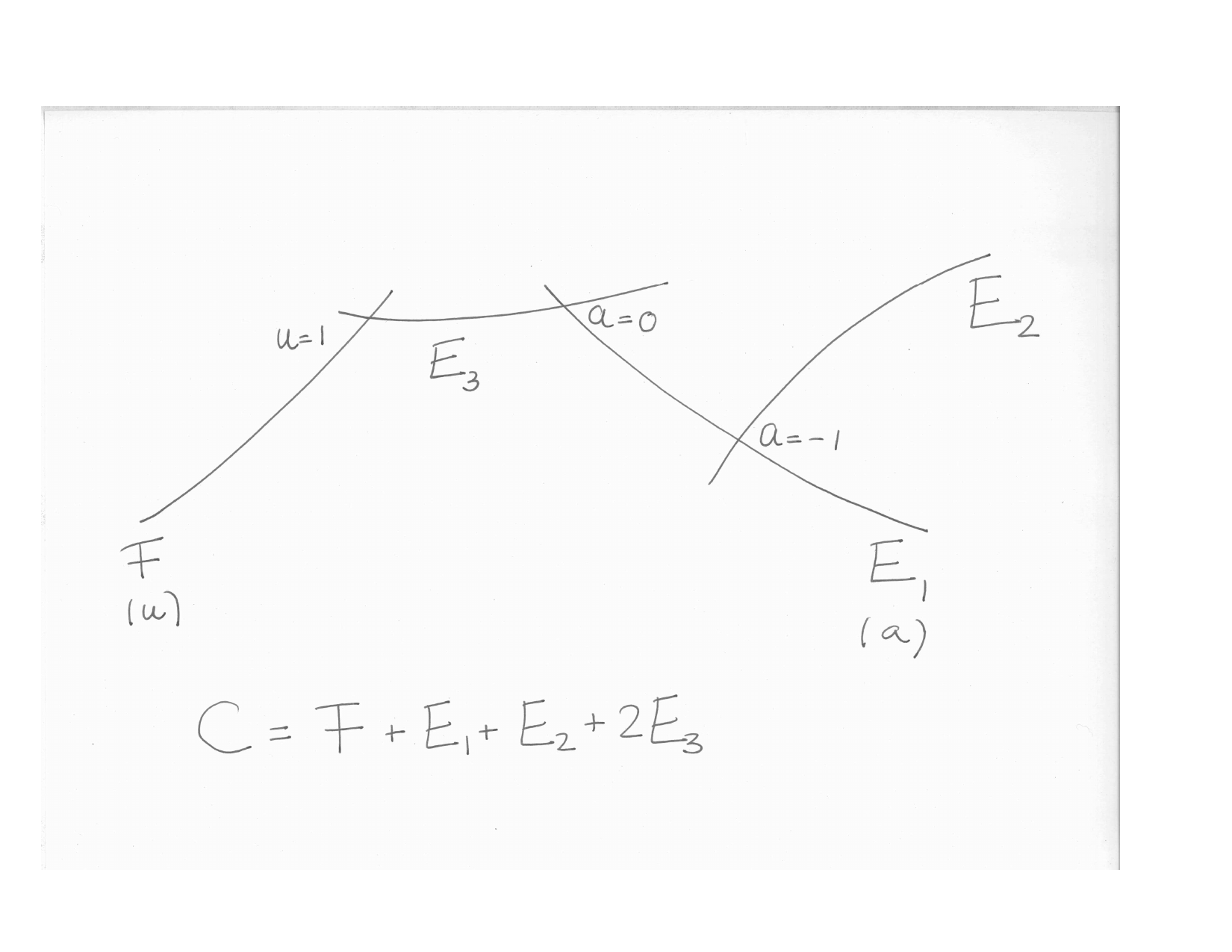}
\caption{Components of the characteristic $3$ fiber.}\label{pic_a}
\end{figure}

We let $T$ be the resulting arithmetic surface. We will abuse notations and denote by $F$ and $E_1$ the proper transforms of the respective curves in $T$.

We construct a family $\pi: S\ra T$ with twelve sections, such that over an open set 
$T^0\subseteq T$ the sections are disjoint, and thus define a map $T^0\ra\M_{0,12}$. 
Moreover, we can enlarge $T^0$ such that that its intersection with each of $F$, $E_i$ is non-empty. Simply blow-up the total space $S$ along the sections that become equal over the generic points of these curves. The maps $T^0\cap F\ra\MM_{0,12}$, $T^0\cap E_i\ra\MM_{0,12}$ extend uniquely to morphisms: 
$$F\ra\MM_{0,12},\quad E_i\ra\MM_{0,12}.$$

From the universal family $S\ra T^0$ restricted to $T^0\cap F$, $T^0\cap E_i$ we can determine the classes of the curves $F$, $E_i$. (We use here that the class of a curve $B\ra\M_{0,n}$ is determined by the universal family over an open set of $B$). One will eventually have to do further blow-ups to resolve the map $\PP^1_R\dra\M_{0,12}$, but since one can check that one has an equality of numerical classes:
$$C=F+E_1+E_2+2E_3,$$ 
this proves that any other extra components in the characteristic $3$ fiber will map constantly to $\M_{0,12}$. Note that the characteristic $3$ fiber of $T\ra\Spec R$ contains $E_3$ with multiplicity $2$, since we blow-up a node of the fiber. We then prove that each of the classes of these curves is a sum of $F$-curves by using Prop. \ref{lines+exc}.

\Trick{Universal family over an open set in $\PP^1_R$.\label{equations I}}
Let $X', Y', Z'$ be coordinates on the dual projective plane $\cPP^2_R$. The incidence variety in $\PP^2_R\times\cPP^2_R$, with equation$XX'+YY'+ZZ'=0$
parametrizes pencils of lines through points in $\PP^2_R$. We consider 
the subvariety $\cI\subseteq \PP^1_R\times\cPP^2_R$ 
of pencils of lines through points of $\cC\cong\PP^1_R$: 
$$\cI: \quad (\om u^2+uv)X'+(\om uv+v^2)Y' -\om uvZ'=0.$$

The first projection $\cI\ra\PP^1_R$ is a $\PP^1$-bundle. Each point $q$ of the $12$ points in the dual Hesse configuration defines a rational section $s_q$. If $q\neq m,n,p,1,a$, then one simply has $s_q=\cI\cap (\PP^1_R\times L_q)$, where $L_q\subseteq\cPP^2$ is the line dual to the point $q$. If $q\in\{m,n,p,1,a\}$, then one has to discard the fiber at $q$.
Note that over a general point in $\PP^1_K$ the sections are disjoint.  

One obtains a simpler description of the universal family as follows. From now on we will work in the chart $v=1$ on $\PP^1_R$. Each section $s_q$ gives a rational map $\AA^1_R\dra\PP^2_R$. Composing with the projection $\PP^2_R\dra\PP^1_R$ from the point $(0,0,1)$, 
one obtains a family over $\AA^1_R$ that defines the same map $\PP^1_R\dra\MM_{0,12}$. This 
is simply
$$S=\AA^1_R\times\PP^1, \quad \text{  with the projection }\quad \pi: S\ra \AA^1_R,$$ 
with sections given by the following equations. We denote by $X', Y'$ the coordinates on $\PP^1$ (with $u$ as before the coordinate on $\AA^1_R$):
\begin{align*}
m &:\quad X'=0,\\
n &:\quad Y'=-uX',\\
p &:\quad Y'=0,\\
a &:\quad Y'=\om uX',\\
b &:\quad (1-u)Y'=\om u(\om-u)X',\\
c &:\quad (1-u)Y'=-u(\om u+2)X',\\
1 &:\quad Y'=\om^2 u X',\\
2 &:\quad (\om^2 u-1)Y'=u(\om u+2)X',\\
3 &:\quad  (\om^2 u-1)Y'=\om u(u-1)X',\\
\al &:\quad (2\om u+1)Y'=\om u(1-u)X',\\
\be &:\quad (2\om u+1)Y'=-u(\om u+2)X',\\
\ga &:\quad (2\om u+1)Y'=\om u(\om-u)X'.
\end{align*}

\Trick{The component $F$ of the characteristic $3$ fiber.}
As $\om=1$, all sections but $m, n, p$, become equal,  given by equation $Y'=uX'$. We blow-up the total space $S$ along $\om=1, Y'=uX'$. Locally, in coordinates, we have: $$Y'=uX'+(\om-1)Y_1,$$ 
with the exceptional divisor cut by $\om=1$ and introducing a new coordinate $Y_1$. The proper transforms of the sections that intersect this chart have equations:
\begin{align*}
a &:\quad Y_1=uX',\\
b &:\quad (1-u)Y_1=u(\om+1-u)X',\\
c &:\quad (1-u)Y_1=u(\om+2-u)X',\\
1 &:\quad Y_1=(\om+1)u X',\\
2 &:\quad (\om^2 u-1)Y_1=-u(\om u+\om+2)X',\\
3 &:\quad  (\om^2 u-1)Y_1=-u(\om u+1)X',\\
\al &:\quad (2\om u+1)Y_1=u\big(1+u(\om-1)\big)X',\\
\be &:\quad (2\om u+1)Y_1=u(\om+2)(\om u+1)X',\\
\ga &:\quad (2\om u+1)Y_1=u\big(\om+1+u(\om-1)\big)X'.
\end{align*}
The ``attaching" section (call this $y$) has equation $X'=0$. For general $u$, the twelve sections are distinct, and we obtain the universal family over the proper transform $F$ of $\PP^1_{\FF_3}$.
A general point in $F$ parametrizes a curve in the boundary 
$$\de_{mnp}=\MM_{0,10}\times\MM_{0,4}=
\MM_{0,\{a,b,c,1,2,3,\al,\be,\ga,y\}}\times\MM_{0,\{m,n,p,y\}}.$$ 

It is easy to see that the cross-ratio of sections $m,n,p, y$ do not depend on $u$, thus 
the projection of $F$ onto $\MM_{0,4}$ is constant. Thus the class of $F$ is given by the 
class of the curve in $\MM_{0,10}$ obtained by making $\om=1$ in the above equations. 

An easy computation gives that the class of $F$ is given by: 
\begin{align*}
F& =\De_{2,c,\be}+\De_{3,a,\al}+\De_{1,b,\ga}+\De_{1,a,\be}+\De_{2,b,\al}+\De_{3,c,\ga}+\\
&+\De_{3,b,\be}+\De_{1,c,\al}+\De_{2,a,\ga}+\De_{1,2,3}+\De_{a,b,c}+\De_{\al,\be,\ga}-\De_{m,n,p}.
\end{align*}
(where we use Notation \ref{formal}). Note that $F\cdot\de_{m,n,p}=-1$ since 
$F\cdot\psi_y=1$ on $\MM_{0,10}$.

\Trick{The first blow-up.\label{Hesse 1st blow-up}}
We blow-up $\PP^1_R$ at the point $u=1$ in $\AA^1_{\FF_3}$, i.e., along $\om=1, u=1$. Locally, in coordinates, we have 
$u-1=(\om-1)a$, with exceptional divisor $E_1: \om=1$ and new coordinate $a$. 

The following is an argument that we will repeat several times in what follows. The family $S\ra\PP^1_R$ pulls back to give an arithmetic threefold over $T$, which is itself a blow-up of $S$. By abuse of notations, we will keep denoting this with $S$. The proper transforms of the twelve sections in (\ref{equations I}) give (rational) sections of the new map $S\ra T$, with equations:
\begin{align*}
m &:\quad X'=0,\\
n &:\quad Y'=-X'\big(1+(\om-1)a\big),\\
p &:\quad Y'=0,\\
a &:\quad Y'=X'\om\big(1+(\om-1)a\big),\\
b &:\quad Y'a=X'\om(a-1)\big(1+(\om-1)a\big),\\
c &:\quad Y'a=X'\om(\om+a)\big(1+(\om-1)a\big),\\
1 &:\quad Y'=X'\om^2\big(1+(\om-1)a\big),\\
2 &:\quad Y'\om(a-1)=X'(a+\om)\big(1+(\om-1)a\big),\\
3 &:\quad Y'\om(a-1)=X'a\big(1+(\om-1)a\big),\\
\al &:\quad Y'(1-2a)=X'a\big(1+(\om-1)a\big),\\
\be &:\quad Y'(1-2a)=X'(a+\om)\big(1+(\om-1)a\big),\\
\ga &:\quad Y'(1-2a)=X'(1-a)\big(1+(\om-1)a\big).
\end{align*}

Along $E_1: \om=1$, the sections $a, 1, \be$ become equal to $Y'=X'$. By blowing-up the total space along  $\om=1, Y'=X'$, the $12$ sections become distinct above the generic point of $E_1$. The curve $E_1$ thus lies in the boundary $$\de_{a1\be}\cong\MM_{0,10}\times\MM_{0,4}.$$ Making $\om=1$ in the above equations, allows one to compute the class of its first projection $E'_1\subset\MM_{0,10}$. As a curve in
$\M_{0,12}$ it is given by:
\begin{align*}
E'_1&=\De_{m,b,c}+\De_{p,3,\al}+\De_{n,2,\ga}+\De_{m,2,3}+\De_{p,b,\ga}+
\De_{n,c,\al}+\\
&+\De_{m,\al,\ga}+\De_{p,2,c}+\De_{n,3,b}+
\De_{2,b,\al}+\De_{3,c,\ga}+\De_{mnp}-\De_{1,a,\be}.
\end{align*}

The second projection $E''_1\subset\MM_{0,4}$ is an $F$-curve with class:
$$E''_1=-\De_{1,a,\be}+\De_{1,a}+\De_{1,\be}+\De_{a,\be}.$$

Then $E_1=E'_1+E''_1$ has numerical class:
\begin{align*}
E_1&=\De_{m,b,c}+\De_{p,3,\al}+\De_{n,2,\ga}+\De_{m,2,3}+\De_{p,b,\ga}+
\De_{n,c,\al}+\De_{m,\al,\ga}+\\
&+\De_{p,2,c}+\De_{n,3,b}+
\De_{2,b,\al}+\De_{3,c,\ga}+\De_{mnp}-2\De_{1,a,\be}+\De_{1,a}+\De_{1,\be}+\De_{a,\be}.
\end{align*}

\Trick{The second blow-up.} In the notations of 
(\ref{Hesse 1st blow-up}) we further blow-up $\PP^1_R$ at the point $a=-1$ in $E_1$, i.e., along $\om=1, a=-1$. Locally, in coordinates, we have $a+1=(\om-1)b$, with exceptional divisor $E_2: \om=1$ and new coordinate $b$. The proper transforms of the sections have equations:
\begin{align*}
m &:\quad X'=0,\\
n &:\quad Y'=-X'\big((\om-1)^2b-\om+2\big),\\
p &:\quad Y'=0,\\
a &:\quad Y'=X'\om\big((\om-1)^2b-\om+2\big),\\
b &:\quad Y'\big((\om-1)b-1\big)=X'\om\big((\om-1)b-2\big)\big((\om-1)^2b-\om+2\big),\\
c &:\quad Y'\big((\om-1)b-1\big)=X'\om(\om-1)(b+1)\big((\om-1)^2b-\om+2\big),\\
1 &:\quad Y'=X'\om^2\big((\om-1)^2b-\om+2\big),\\
2 &:\quad Y'\om\big((\om-1)b-2\big)=X'(\om-1)(b+1)\big((\om-1)^2b-\om+2\big),\\
3 &:\quad Y'\om\big((\om-1)b-2\big)=X'\big((\om-1)b-1\big)\big((\om-1)^2b-\om+2\big),\\
\al &:\quad Y'\big(3-2(\om-1)b\big)=X'\big((\om-1)b-1\big)\big((\om-1)^2b-\om+2\big),\\
\be &:\quad Y'(-2b+\om^2-1)=X'(b+1)\big((\om-1)^2b-\om+2\big),\\
\ga &:\quad Y'\big(3-2(\om-1)b\big)=X'\big((\om-1)b-2\big)\big((\om-1)^2b-\om+2\big).
\end{align*}

Along $E_2: \om=1$, one has:
\begin{align*}
m=\al=\ga &:\quad X'=0,\\
n=b=3 &:\quad Y'=-X',\\
p=c=2 &:\quad Y'=0,\\
a=1 &:\quad Y'=X',\\
\be &:\quad Y'b=X'(b+1).
\end{align*}
\begin{figure}[htbp]
\includegraphics[width=4in]{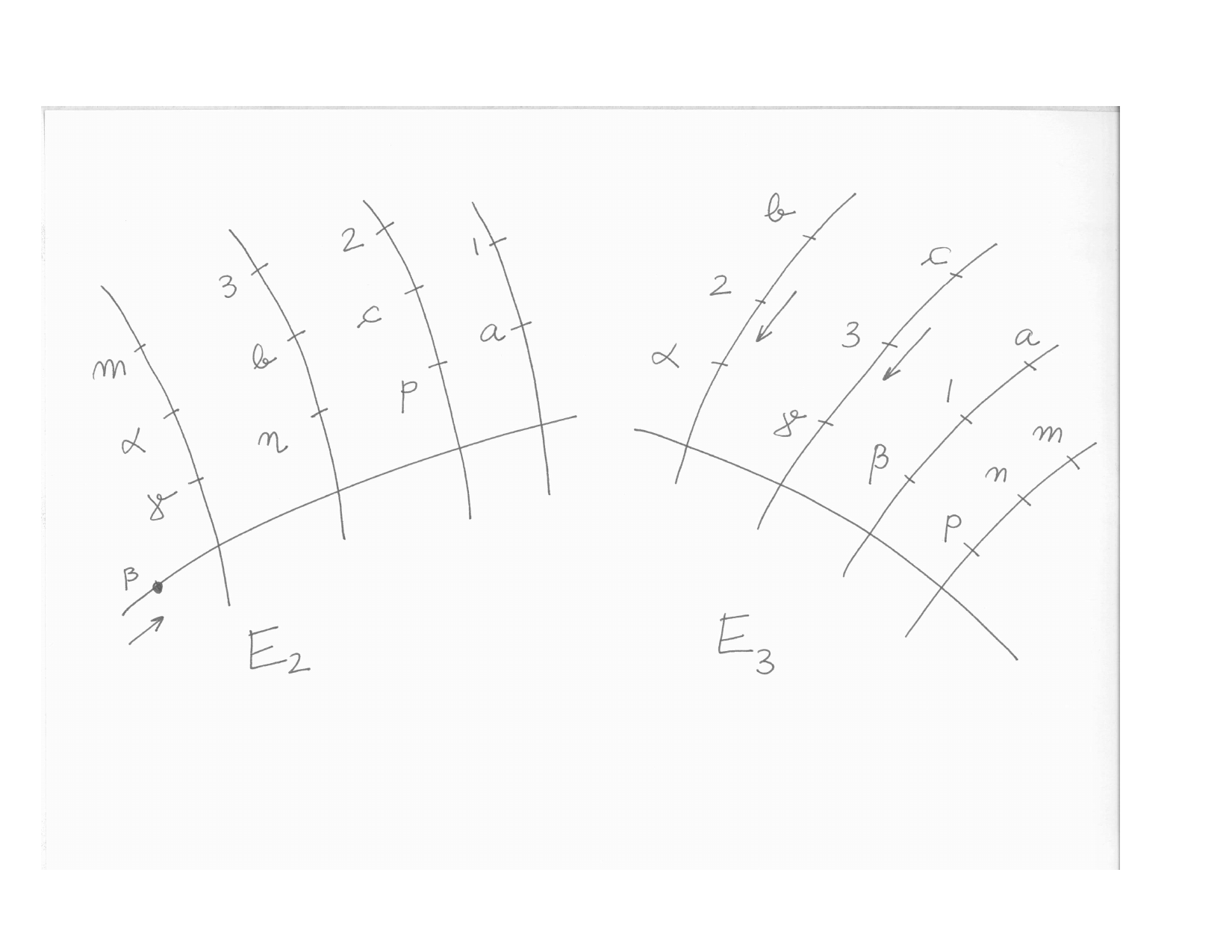}
\caption{Components $E_2$ and $E_3$ of the characteristic $3$ fiber.}\label{pic_b}
\end{figure}

Blowing-up the total space along the above loci (where some of the sections become equal along $E_2$), the twelve sections become disjoint above the generic point of $E_2$. See also Fig.~\ref{pic_b}. The curve $E_2$ has numerical class:
$$E_2=\De_{m,\al,\be,\ga}+\De_{p,2,c,\be}+\De_{3,b,n,\be}+
\De_{1,a,\be}-\De_{m,\al,\ga}-\De_{p,2,c}-\De_{n,3,b}-\De_{1,a}.$$

\Trick{The third blow-up.} In the notations of 
(\ref{Hesse 1st blow-up}) we further blow-up $\PP^1_R$ at the point $E_1\cap F$, i.e., along $\om=1, a=\infty$. By passing to the other chart of the blow-up in (\ref{Hesse 1st blow-up}), if we let $s=\frac{1}{a}$  (thus $\om-1=(u-1)s$), we blow-up the point $u=1, s=0$. Locally, in coordinates, we have $s=(u-1)t$, with exceptional divisor $E_3: u=1$ and new coordinate $t$. The proper transforms of the twelve sections have equations:
\begin{align*}
m &:\quad X'=0,\\
n &:\quad Y'=-X'u,\\
p &:\quad Y'=0,\\
a &:\quad Y'=X'\om u,\\
b &:\quad Y'=X'\om u\big(1-(u-1)t\big),\\
c &:\quad Y'=X'u\big(u(u-1)t-(\om+2)(u-1)t+1\big),\\
1 &:\quad Y'=X'\om^2 u,\\
2 &:\quad Y'\big(1-(u-1)t\big)\om=X'u\big(1+\om(u-1)t\big),\\
3 &:\quad Y'\big(1-(u-1)t\big)\om=X'u,\\
\al &:\quad Y'\big((u-1)t-2\big)=X'u,\\
\be &:\quad Y'\big((u-1)t-2\big)=X'u\big(1+\om(u-1)t\big),\\
\ga &:\quad Y'\big((u-1)t-2\big)=X'u\big(1-(u-1)t\big).\\
\end{align*}

Along $E_3: u=1$, the sections $a,b,c,1,2,3,\al,\be,\ga$ become equal to $Y'=X'$. We blow-up the total space along $u=1, Y'=X'$. Locally, in coordinates, we have $Y'=X'+(u-1)Y_1$, 
with the exceptional divisor cut by $\om=1$ and new coordinate $Y_1$.
The proper transforms of the nine sections have equations:
\begin{align*}
a &:\quad Y_1=X'\big(u(u-1)t+1\big),\\
b &:\quad Y_1=X'\big(-\om u+(u-1)t+1\big),\\
c &:\quad Y_1=X'\big(u^2t-(\om+2)ut+1\big),\\
1 &:\quad Y_1=X'\big(\om^2+(\om+1)(u-1)t\big),\\
2 &:\quad Y_1\big(1-(u-1)t\big)\om=X'\big(\om ut+\om t+1-(u-1)t\big),\\
3 &:\quad Y_1\big(1-(u-1)t\big)\om=X'\big(\om t-(u-1)t+1\big),\\
\al &:\quad Y_1\big((u-1)t-2\big)=X'\big(1-t+(\om+2)(u-1)t\big),\\
\be &:\quad Y_1\big((u-1)t-2\big)=X'\big(\om ut-t+1-(\om+2)(u-1)t\big),\\
\ga &:\quad Y_1\big((u-1)t-2\big)=-X'\big(ut+t-1+(\om+2)(u-1)t\big).\\
\end{align*}
The ``attaching section" is $X'=0$.  Along $u=1$ the sections become:
\begin{align*}
b=2=\al &:\quad Y'=(1-t)X',\\
c=3=\ga &:\quad Y'=(1+t))X',\\
a=1=\be &:\quad Y'=X'.
\end{align*}

By blowing-up the total space along the above loci (where some of the sections become equal along $E_3$), the twelve sections become disjoint above the generic point of $E_3$. See also Fig.~\ref{pic_b}. The curve $E_3$ is thus containd in several boundary components:
$$E_3\subset \de_{2,b,\al}\cap\de_{3,c,\ga}\cap\de_{1,a,\be}\cap\de_{m,n,p}.$$

From the blow-up of thes above loci, one can see that the only cross-ratios that 
change with $t$ are the ones coming from the triples $b, 2, \al$ and $c, 3, \ga$.
The curve $E_3$ is thus the sum of two $F$-curves:
$$E_3=\big(-\De_{2,b,\ga}+\De_{2,b}+\De_{2,\ga}+\De_{b,\ga}\big)+
\big(-\De_{3,c,\ga}+\De_{3,c}+\De_{3,\ga}+\De_{c,\ga}\big).$$

\Trick{The classes $F$, $E_i$ are sums of $F$-curves.\label{example: sums of F-curves}} 

Recall that $F$-curve classes are represented by curves as in Figure \ref{pic_k}.
\begin{figure}[htbp]
\includegraphics[width=2in]{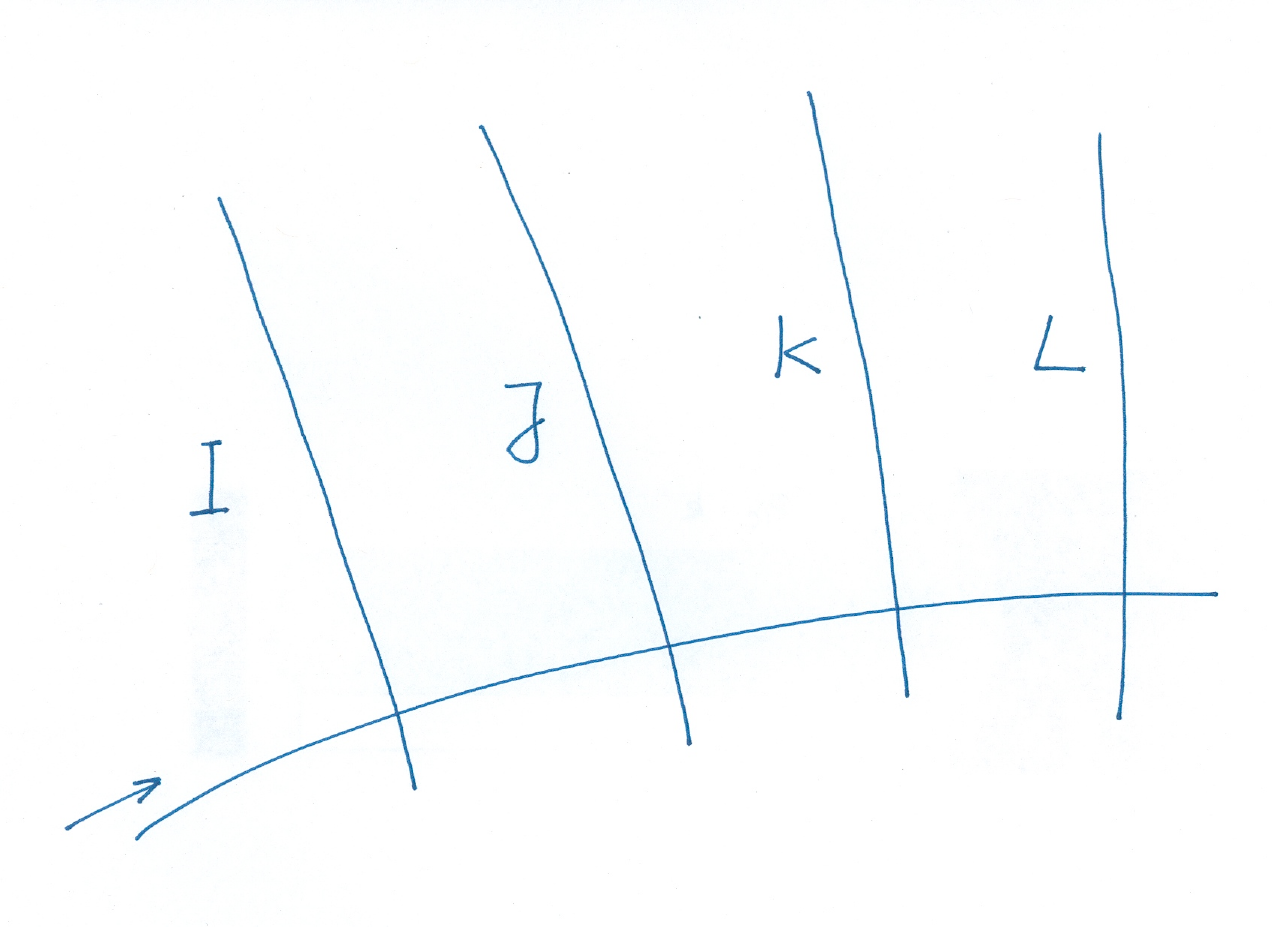}
\caption{$F$-curves are given by a partition $I, J, K, L\neq\emptyset$ of 
$\{1,\ldots, n\}$.}\label{pic_k}
\end{figure}

Note that the markings from $I, J, K, L$ stay fixed. Such a curve has class:
$$-\De_I-\De_J-\De_K-\De_L+\De_{I\cup J}+\De_{I\cup K}+\De_{I\cup L},$$
(with the convention that we omit the terms $\De_I$ if $|I|=1$). Perhaps the easiest curves that
are sums of $F$-curves are components of fibers of forgetful maps $\M_{0,n}\ra\M_{0,n-1}$ that forget one marking $p\in\{1,\ldots, n\}$: 

\begin{figure}[htbp]
\includegraphics[width=4in]{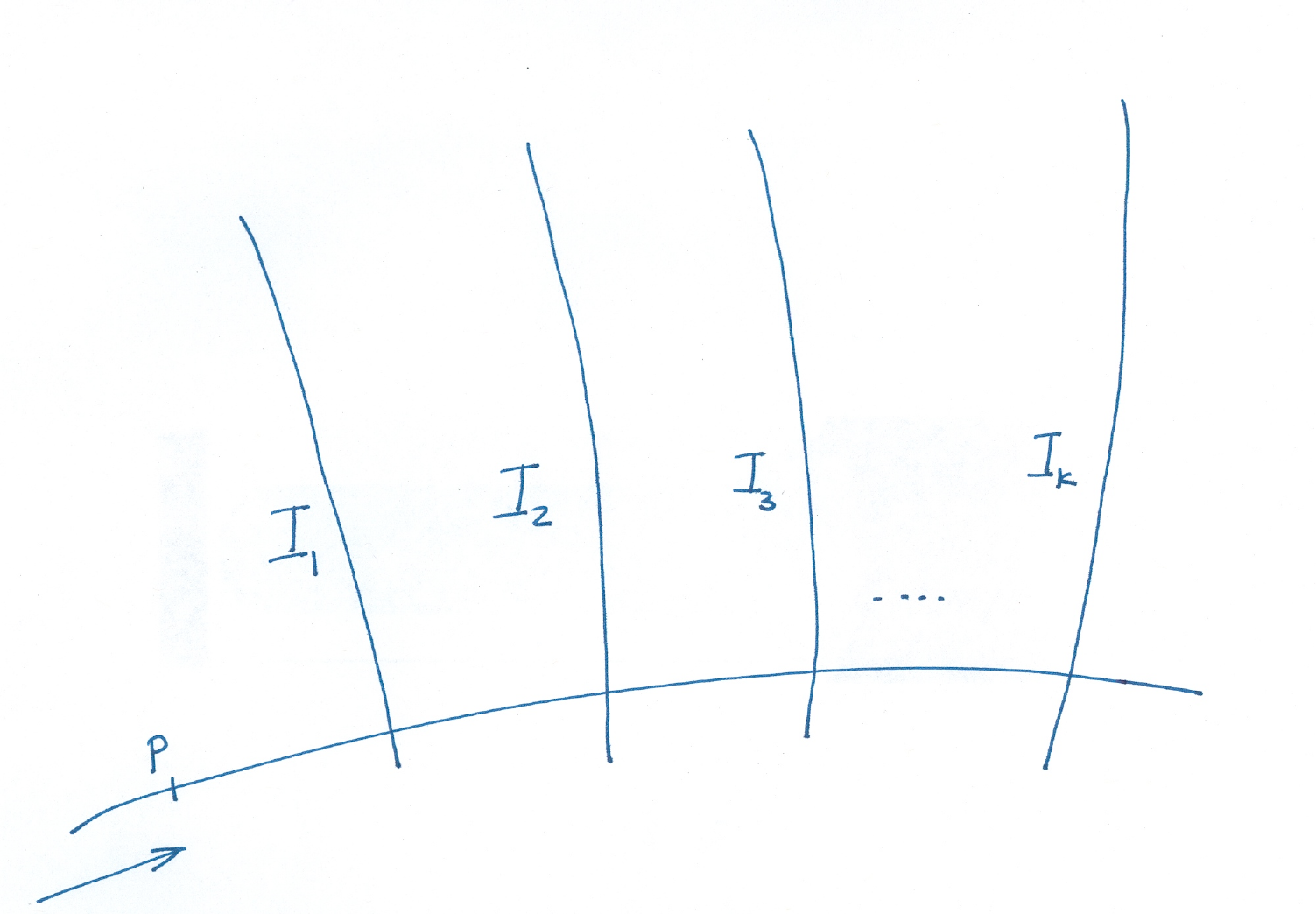}
\caption{}\label{pic_l}
\end{figure}

\begin{claim}\label{fiber comp}
Let $\{p\}\cup I_1\cup\ldots \cup I_k$ ($k\geq3$) be a partition of the set $\{1,\ldots, n\}$
and let $B$ be the curve in $\M_{0,n}$ given as in Figure \ref{pic_l} (where the markings in  $I_1,\ldots, I_k$ stay fixed, as do their attaching points,  and $p$ is the only moving point). 
Then $B$  has class:
$$B=-\De_{I_1}-\ldots-\De_{I_k}+\De_{I_1\cup \{p\}}+\ldots+\De_{I_k\cup \{p\}},$$
(we omit the terms $\De_{I_j}$ if $|I_j|=1$). Moreover, $B$ is a sum of $(k-2)$ $F$-curves.
\end{claim}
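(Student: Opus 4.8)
\emph{Plan.} The idea is to realize $B$ as a component of a fiber of a forgetful map and then to degenerate it, keeping the rigid tails fixed. Let $\pi\colon\M_{0,n}\to\M_{0,n-1}$ be the map forgetting the marking $p$, so that $\M_{0,n}$ is the universal curve over $\M_{0,n-1}$. By construction $B$ is the irreducible component $P_0$ of the fiber $\pi^{-1}(b_0)$ over the point $b_0\in\M_{0,n-1}$ whose stable curve is the ``$k$-leg star'': a central $\PP^1$, call it $P_0$, carrying $k$ attaching nodes $q_1,\dots,q_k$ at which the rigid tails $T_j$ (with markings $I_j$) are glued, and with $p$ sweeping out $P_0$. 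If $|I_j|=1$ the ``tail'' $T_j$ is just the single marking sitting on $P_0$. This is precisely the situation of Case~I in the proof of Prop.~\ref{lines+exc}.

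\emph{The class formula.} The numerical class of a curve in $\M_{0,n}$ is determined by its intersection numbers with the boundary divisors $\de_J$, so in the notation of \ref{formal} it suffices to compute all $B\cdot\de_J$. For $J\ni p$, write $J=\bar J\cup\{p\}$ with $\bar J\subseteq N\setminus\{p\}$; since $B$ is contracted by $\pi$ and $\pi^*\de_{\bar J}=\de_{\bar J}+\de_{\bar J\cup\{p\}}$, one gets $B\cdot\de_{\bar J\cup\{p\}}=-\,B\cdot\de_{\bar J}$, which reduces everything to subsets $\bar J$ with $p\notin\bar J$. A general member of $B$ is $P_0\cup\bigcup_j T_j$, whose only nodes separate the marking sets $I_j$ (for $|I_j|\ge2$) and the internal splittings of the tails; hence $B$ is contained in $\de_{I_j}$ for $|I_j|\ge2$ and in the boundary divisors coming from the tails, and is disjoint from every other $\de_{\bar J}$ with $p\notin\bar J$. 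The only intersections of $B$ with the boundary arise from the $k$ collisions $p\to q_j$: each produces, transversally, a single point of $B\cap\de_{\{p\}\cup I_j}$ and nothing else. So $B\cdot\de_{\{p\}\cup I_j}=1$ for every $j$, and then $B\cdot\de_{I_j}=-1$ when $|I_j|\ge2$, while $B\cdot\de_{\bar J}=0$ for every remaining subset, either because $B$ is disjoint from $\de_{\bar J}$, or (for $\de_{\bar J}$ coming from a tail) because $B$ is disjoint from $\de_{\bar J\cup\{p\}}$ and the relation above forces the self-intersection to vanish. This is exactly the stated formula.

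\emph{The $F$-curve decomposition.} Here I would degenerate: move the attaching points $q_1,\dots,q_k$ on $P_0$, leaving the tails $T_j$ untouched, until the central curve degenerates to a chain of $k-2$ three-pointed components $P_0^{(1)},\dots,P_0^{(k-2)}$ (for $k=3$ there is nothing to do, $B$ is already of the required form). This is a one-parameter degeneration inside $\M_{0,n-1}$ along which the rigid tails stay constant, so in the corresponding flat family of fibers of $\pi$ the cycle $B=[P_0]$ specializes to $\bigcup_i P_0^{(i)}$ together with the tail classes, and the latter, being literally unchanged, cancel out; hence $[B]=\sum_{i=1}^{k-2}[P_0^{(i)}]$. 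Each $P_0^{(i)}$ is the locus ``$p$ moves on a fixed $\PP^1$ carrying three fixed special points'', i.e.\ an $F$-curve (with the four-valent vertex being that component and the four directions $\{p\}$ and the three others). Thus $B$ is a sum of $k-2$ $F$-curves; adding up the standard $F$-curve classes of the $P_0^{(i)}$ produces a telescoping sum that recovers the class formula, which serves as a cross-check.

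\emph{Main obstacle.} The only step needing genuine care is the flat-family/specialization argument: one must check that the tails $T_j$ really remain constant along the degeneration so their classes cancel exactly, and that the flat limit of the $P_0$-component is precisely the reduced chain $\bigcup_i P_0^{(i)}$ with no unexpected multiplicities. The rest — the pullback relation $\pi^*\de_{\bar J}=\de_{\bar J}+\de_{\bar J\cup\{p\}}$ and the enumeration of nodes of the curves parametrized by $B$ — is routine bookkeeping, the main pitfall being sign and complement ($\de_S=\de_{S^c}$) errors when matching up indices.
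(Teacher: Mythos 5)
Your proof is correct and follows essentially the same route as the paper's: both identify $B$ as a component of a fiber of the map forgetting $p$, compute its intersection numbers with the boundary divisors, and decompose $B$ into the $k-2$ $F$-curves arising from a caterpillar degeneration of the central component (your chain degeneration produces exactly the paper's list of partitions $\{p\},\ I_1\cup\dots\cup I_j,\ I_{j+1},\ I_{j+2}\cup\dots\cup I_k$). The only differences are cosmetic: you obtain $B\cdot\de_{I_j}=-1$ from the transversal intersection $B\cdot\de_{I_j\cup\{p\}}=1$ together with $\pi^*\de_{I_j}=\de_{I_j}+\de_{I_j\cup\{p\}}$, where the paper uses $B\cdot\de_{I_j}=-B\cdot\psi_{y_j}$ on $\M_{0,k+1}$, and your specialization argument for the $F$-curve decomposition is the same device the paper itself invokes in Case I of the proof of Prop.~\ref{lines+exc}.
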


\begin{proof}
This is a straightforward computation. If we denote by $y_j$ the attaching point corresponding to the component with markings from $I_j$, note that $B$ comes from a curve in 
$\M_{0,k+1}=\M_{0,\{p,y_1,\ldots, y_k\}}$.  Then $B\cdot\De_{I_j}$ equals 
$-B\cdot\psi_{y_j}$ (on $\M_{0,k+1}$), and thus $B\cdot\De_{I_j}=-1$. One can check directly that $B$ is the sum of the following $F$-curves corresponding to the partitions:
$$\{p\},\quad I_1,\quad I_2,\quad I_3\cup\ldots\cup I_k,$$
$$\{p\},\quad I_1\cup I_2,\quad I_3,\quad I_4\cup\ldots\cup I_k,$$
$$\ldots$$
$$\{p\},\quad I_1\cup\ldots\cup I_{k-2},\quad I_{k-1},\quad I_k.$$
\end{proof}

We now prove that the classes $F, E_i$ are sums of $F$-curves. Clearly, $E_3$ is the sum of two $F$-curves. The curve $E_2$ is a sum of two $F$-curves by Claim \ref{fiber comp} (note also that $E_2$ comes from a curve in $\MM_{0,5}$, thus a sum of $F$-curves by Cor. \ref{F-conj}).

For the curves $F$ and $E_1$, we will use Prop. \ref{lines+exc}. Note that the two curves have numerical classes equal to classes of (proper transforms of) lines in surfaces $\Bl_{p_1,\ldots,p_n}\PP^2$ as in Thm. \ref{blowupdescr}. To see this, consider the
configuration of all $\FF_3$-rational points in $\PP^2_{\FF_3}$ except for $(2,1,0)$:
$$m=(1,0,0), \quad n=(1,1,0), \quad p=(0,1,0),$$
$$a=(1,1, 1), \quad b=(0,1,1), \quad c=(2,1,1),$$
$$1=(0,2,1),\quad 2=(2,2,1), \quad 3=(1,2,1),$$
$$\al=(1,0,1),\quad \be=(2,0,1), \quad \ga=(0,0,1).$$

The configuration has the same pairs of points collinear as the Hesse configuration. In addition, the following points give concurrent lines (see Figure \ref{pic_s}):
$$mnp, 1a\be, 2b\al, 3c\ga.$$
\begin{figure}[htbp]
\includegraphics[width=4in]{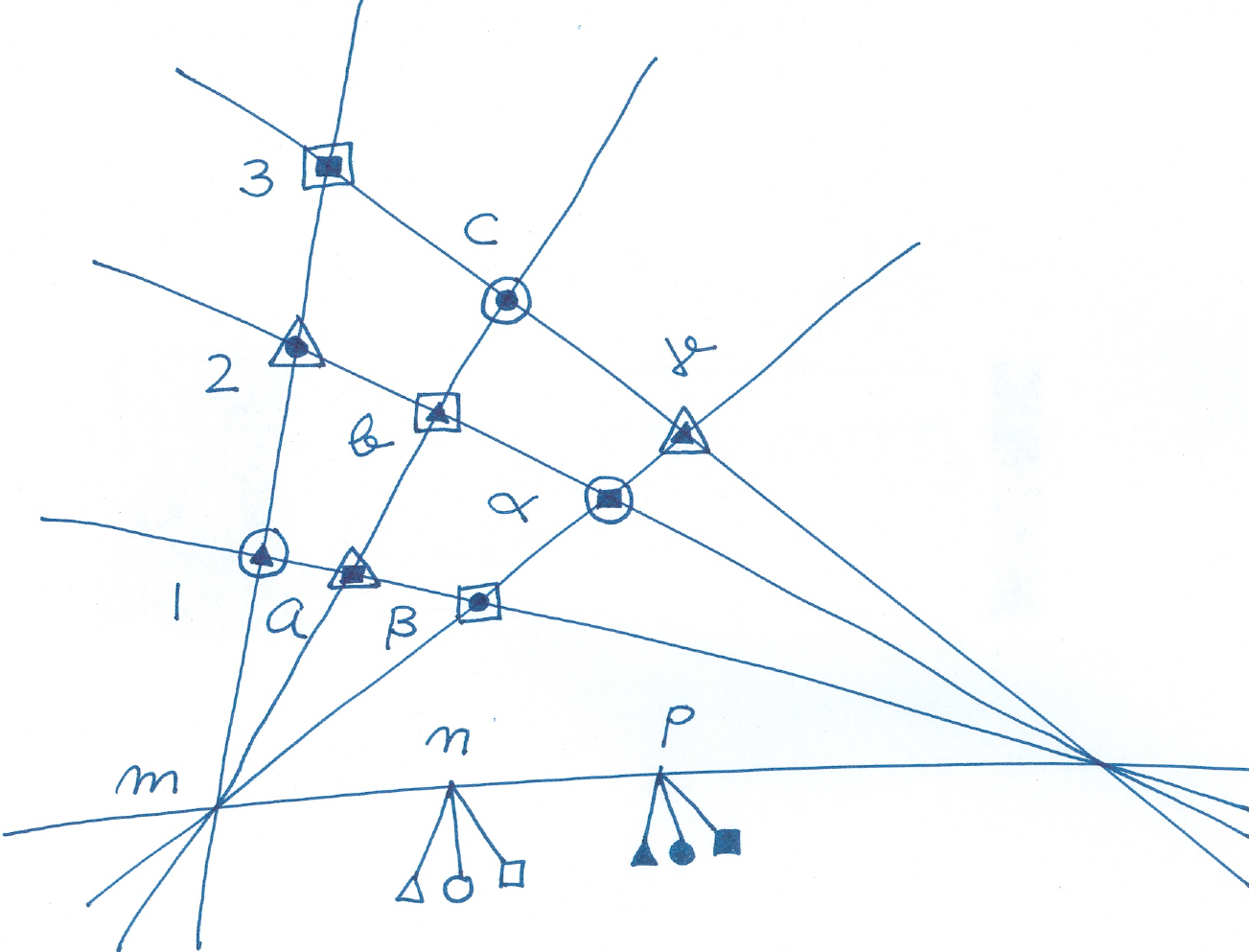}
\caption{ A configuration of $12$ points in $\PP^2_{\FF_3}$. }\label{pic_s}
\end{figure}

Let $S$ be the blow-up $S$ of $\PP^2_{\overline{\FF}_3}$ at the above twelve points.
Thm. \ref{blowupdescr} gives a map:
$$S\ra\MM_{0,12;\overline{\FF}_3},$$ 

Theorem \ref{blowupdescr} allows one to compute the class of any curve in $S$. 
It is straightforward to check that the class of the proper transform of the line
$1a\be$ equals the class of $E_1$. We will use this to prove that $E_1$ is a sum of $F$-curves.

Moreover, the line $mnp$ lies in the boundary component
$$\de_{mnp}\cong\MM_{0,10}\times\MM_{0,4}.$$
Taking its projection onto $\MM_{0,10}$ and embedding it again in $\MM_{0,12}$ (attach a fixed  $\PP^1$ marked by $m,n,p$) gives a curve with the same class as $F$. 
One can also argue geometrically: if we blow-up $\PP^2_R$ at the point $(1,1,1)\in\PP^2_{\FF_3}$, the exceptional divisor $A$ is isomorphic to $\PP^2_{\FF_3}$ and the proper transforms of the $R$-points of the Hesse configuration intersect $A$ at the above points. When resolving the map $$\PP^2_R\dra\MM_{0,12},$$ the line $1a\be$ is  the component $E_1$ in the proper transform of our conic $\cC$. Similarly, one can express the class of the line $mnp$ in terms of the class of $F$ using the geometry of the ruled surface which is the proper transform of $\PP^2_{\FF_3}$.

\Trick{The class $E_1$ is a sum of $F$-curves.} 

We now use the proof of Prop. \ref{lines+exc} (Case II), as $E_1$ is the class of the line $1a\be$ in $\PP^2_{\overline{\FF}_3}$. 
Since $E_1\subset \de_{1a\be}$,  the curve $E_1$ is a sum of its projections $E'_1$ in $\M_{0,10}$ and $E''_1$ in $\M_{0,4}$ (see \ref{Hesse 1st blow-up}). Note that $E''_1$ is an $F$-curve; hence, we are left to write $E'_1$ as a sum of $F$-curves. 
As indicated in the proof of Prop. \ref{lines+exc}, we remove the points $1, a, \be$ from $\PP^2_{\overline{\FF}_3}$ and place an extra point $x_1$ at a general point of the line $1a\be$. Repeating the construction of 
Theorem \ref{blowupdescr} for this new configuration, we obtain a curve in 
$$\M_{0,10}=\M_{0,\{2,3,b,c,\al,\ga, m, n, p, x_1\}},$$ 
corresponding to the proper transform of a general line $L_1$ through $x_1$. The curve $E'_1$ in $\M_{0,12}$ is obtained from $L_1$ by adding at $x_1$ an extra component $\PP^1$ with markings $1, a, \be$ (and no moduli). The class of $L_1$ in the blow-up of $\PP^2_{\overline{\FF}_3}$ at the points $2, 3, b, c,\al,\ga, m, n, p, x_1$ is the sum of the class of the (proper transform of the) line $2x_1$ and the exceptional divisor $B_2$ corresponding to the point $2$. The class of $B_2$ in $\M_{0,10}$ is given by:
$$-\De_{m,3}-\De_{p,c}-\De_{n,\ga}-\De_{b,\al}+\De_{m,2,3}+
\De_{2,p,c}+\De_{2,n,\ga}+\De_{2,b,\al}+\De_{2,x_1}.$$

To see this, we use again the proof of Prop  \ref{lines+exc} (Case I): The curve $B_2$ is a component of the forgetful map $\M_{0,10}\ra\M_{0,9}$ that forgets the marking $2$. 
The point in $\M_{0,9}$ to which $B_2$ maps is determined by the cross-ratio of the lines joining $2$ with the other points. 
The class of $B_2$ in $\M_{0,12}$ is given by (see Figure \ref{pic_m}):
$$B_2=-\De_{m,3}-\De_{p,c}-\De_{n,\ga}-\De_{b,\al}+\De_{m,2,3}+
\De_{2,p,c}+\De_{2,n,\ga}+\De_{2,b,\al}+\De_{1,2,a,\be}-\De_{1,a,\be}.$$
\begin{figure}[htbp]
\includegraphics[width=3in]{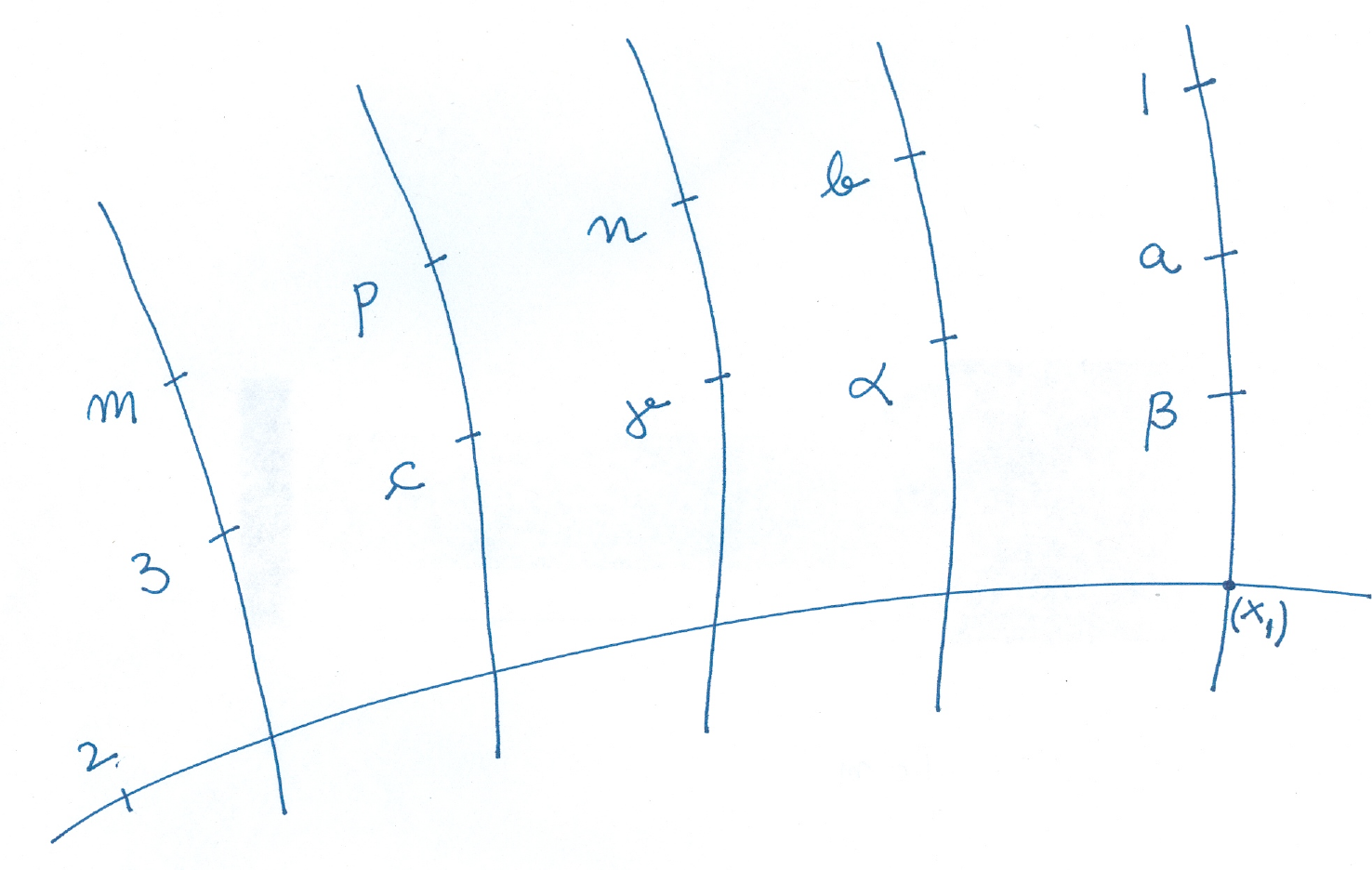}
\caption{ The curve $B_2$ in $\M_{0,12}$.}\label{pic_m}
\end{figure}

In order to find the class of the line $2x_1$, we repeat the argument. As before, we further remove the points $2, x_1$ and place an extra point $x_2$ at a general point of the line $2x_1$. Repeating the construction of Theorem \ref{blowupdescr} for this new configuration, we obtain a curve in 
$$\M_{0,9}=\M_{0,\{3,b,c,\al,\ga, m, n, p, x_2\}},$$ 
corresponding to the proper transform of a general line $L_2$ through $x_2$. The class of $L_2$ in the blow-up of $\PP^2_{\overline{\FF}_3}$ at the points $3, b, c,\al,\ga, m, n, p, x_2$ is the sum of the class of the (proper transform of the) line $3x_2$ and the exceptional divisor $B_3$ corresponding to the point $3$. The class of $B_3$ in $\M_{0,9}$ is given by:
$$-\De_{p,\al}-\De_{b,n}-\De_{c,\ga}+\De_{3,p,\al}+
\De_{3,b,n}+\De_{3,c,\ga}+\De_{3,m}+\De_{3,x_2}.$$

\begin{figure}[htbp]
\includegraphics[width=4in]{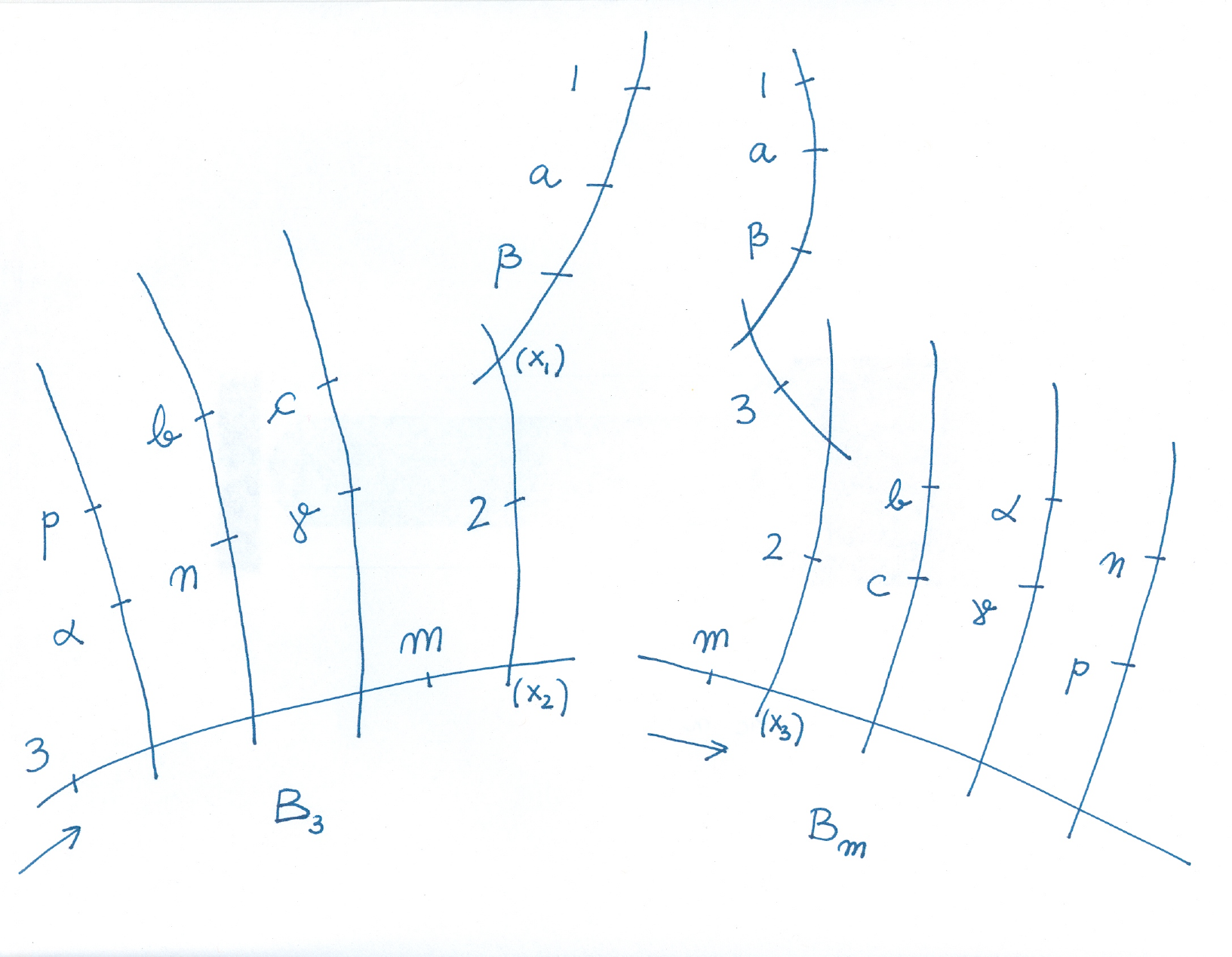}
\caption{ The curves $B_3$ and $B_m$ in $\M_{0,12}$.}\label{pic_n}
\end{figure}
The class of $B_3$ in $\M_{0,12}$ is given by:
$$B_3=-\De_{p,\al}-\De_{b,n}-\De_{c,\ga}+\De_{3,p,\al}+
\De_{3,b,n}+\De_{3,c,\ga}+\De_{3,m}+\De_{1,2,3,a,\be}-\De_{1,2,a,\be}.$$

In order to find the class of the line $3x_2$, we further remove the points $3, x_2$ and place an extra point $x_3$ at a general point of the line $3x_2$. Repeating the construction of Theorem \ref{blowupdescr} for this new configuration, we obtain a curve in 
$$\M_{0,8}=\M_{0,\{b,c,\al,\ga, m, n, p, x_3\}},$$ 
corresponding to the proper transform of a general line $L_3$ through $x_3$.  The class $L_3$ in the blow-up of $\PP^2_{\overline{\FF}_3}$ at the points 
$b, c,\al,\ga, m, n, p, x_3$ is the sum of the class of the (proper transform of the) line $mx_3$ and the exceptional divisor $B_m$ corresponding to the point $m$. 
The class of $B_m$ in $\M_{0,12}$ is given by:
$$B_m=-\De_{b,c}-\De_{\al,\ga}-\De_{n,p}+\De_{m,b,c}+
\De_{m,\al,\ga}+\De_{m,n,p}+\De_{1,2,3,a,\be,m}-\De_{1,2,3,a,\be}.$$

In order to find the class of the line $mx_3$, we further remove the points $m, x_3$ and place an extra point $x_4$ at a general point of the line $mx_3$. Repeating the construction of Theorem \ref{blowupdescr} for this new configuration, we obtain a curve in 
$$\M_{0,7}=\M_{0,\{b,c,\al,\ga, n, p, x_4\}},$$ 
corresponding to the proper transform of a general line $L_4$ through $x_4$. The class $L_4$ in the blow-up of $\PP^2_{\overline{\FF}_3}$ at the points 
$b, c,\al,\ga, n, p, x_3$ is the sum of the class of the (proper transform of the) line $\ga x_4$ and the exceptional divisor $B_{\ga}$ corresponding to the point $\ga$. 
The class of $B_{\ga}$ in $\M_{0,12}$ (see Figure \ref{pic_p}) is given by:
$$B_{\ga}=-\De_{p,b}+\De_{\ga,p,b}+\De_{\ga,\al}+\De_{\ga,n}+\De_{\ga,c}+
\De_{1,2,3,a,\be,\ga,m}-\De_{1,2,3,a,\be,m}.$$
\begin{figure}[htbp]
\includegraphics[width=4in]{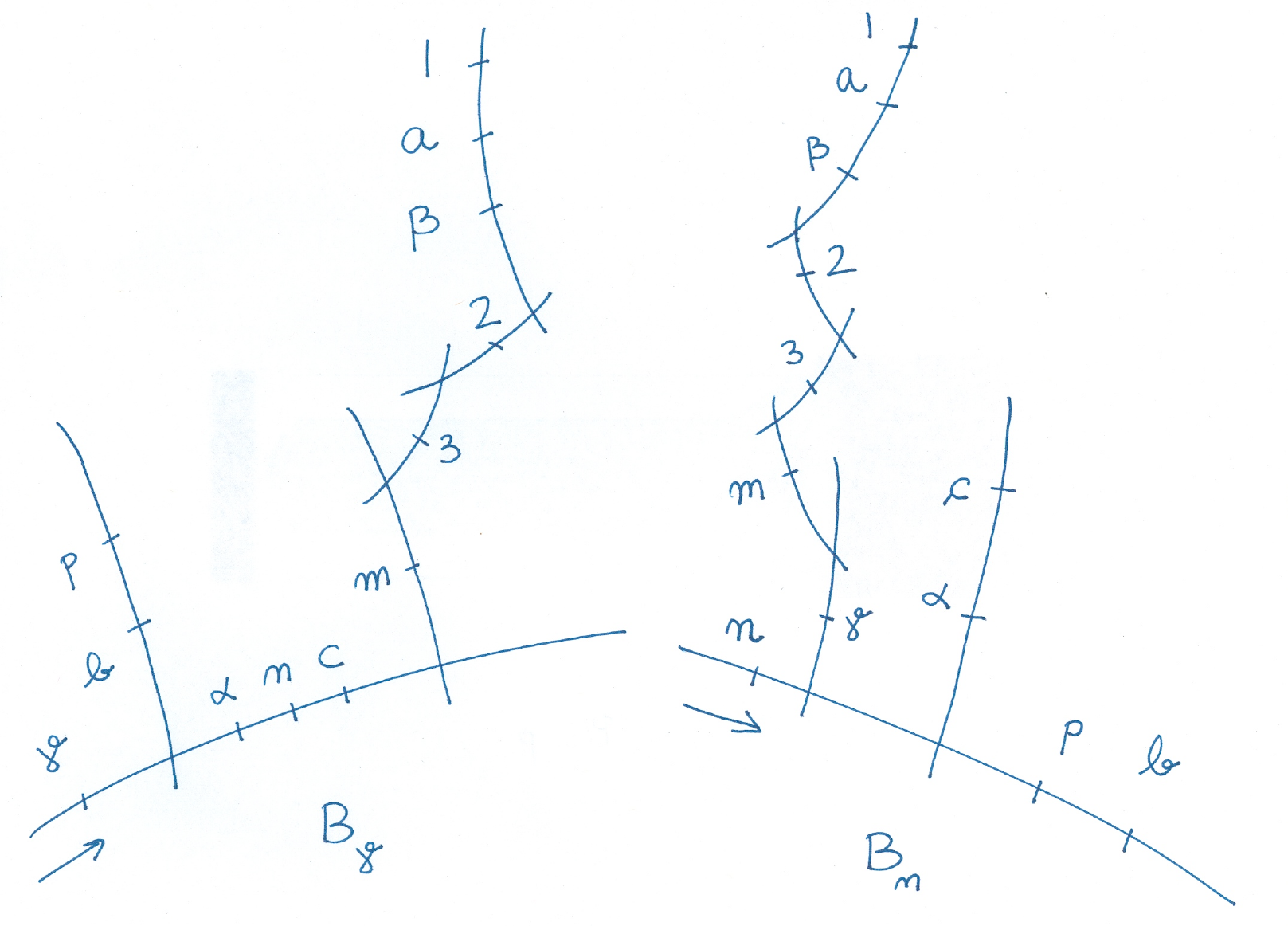}
\caption{ The curves $B_{\ga}$ and $B_n$ in $\M_{0,12}$.}\label{pic_p}
\end{figure}

In order to find the class of the line $\ga x_4$, we further remove the points $\ga, x_4$ and place an extra point $x_5$ at a general point of the line $\ga x_4$. Repeating the construction of Theorem \ref{blowupdescr} for this new configuration, we obtain a curve in 
$$\M_{0,6}=\M_{0,\{b,c,\al, n, p, x_5\}},$$ 
corresponding to the proper transform of a general line $L_5$ through $x_5$. The class $L_5$ in the blow-up of $\PP^2_{\overline{\FF}_3}$ at the points 
$b, c,\al, n, p, x_3$ is the sum of the class of the (proper transform of the) line $nx_5$ and the exceptional divisor $B_n$ corresponding to the point $n$. 
The class of $B_n$ in $\M_{0,12}$ (see Figure \ref{pic_p}) is given by:
$$B_n=-\De_{c,\al}+\De_{n,c,\al}+\De_{n,p}+\De_{n,b}+
\De_{1,2,3,a,\be,\ga,m,n}-\De_{1,2,3,a,\be,m,n}.$$

Note that all of the curves $B_2, B_3, B_m, B_{\ga}, B_n$ are sums of $F$-curves by Claim \ref{fiber comp}. At this point we can continue to follow the algorithm, or just notice that 
\begin{gather*}
E'_1-B_2-B_3-B_m-B_{\ga}-B_n=\big(-\De_{b,c,p}+\De_{b, c}+\De_{b, p}+\De_{c,p}\big)+\\
\big(-\De_{b,c,p,\al}+\De_{\al, b}+\De_{\al, c}+
\De_{\al, p}+\De_{b,c,p}\big)
\end{gather*} 
that is, the difference is the sum of an $F$-curve and a curve as in Claim \ref{fiber comp} (see Figure \ref{pic_r}). It follows that $E'_1$ (and hence, $E_1$) is a sum of $F$-curves. 
\begin{figure}[htbp]
\includegraphics[width=4in]{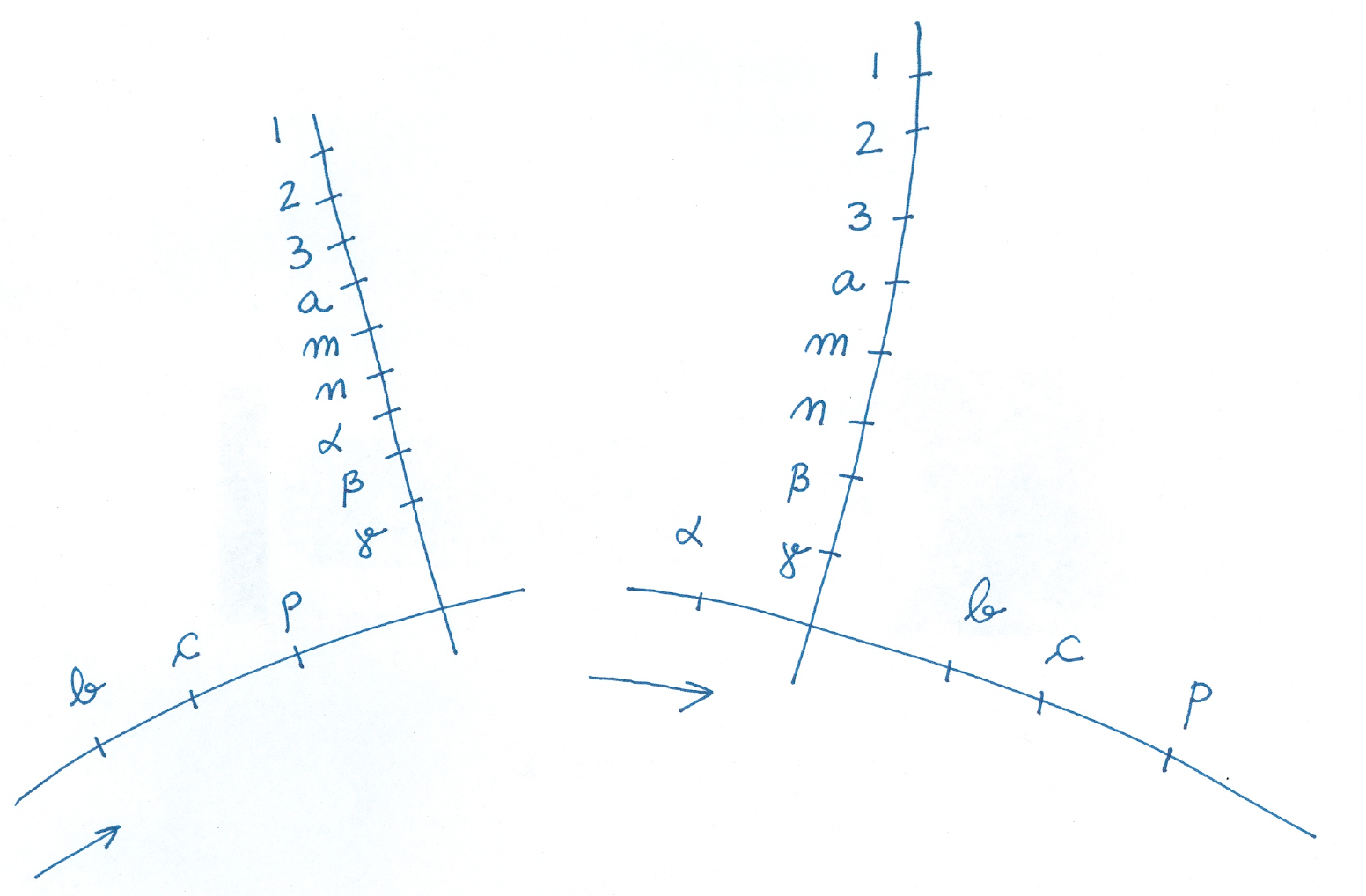}
\caption{}\label{pic_r}
\end{figure}

\Trick{The class $F$ is a sum of $F$-curves.} 
The class of $F$ can be obtained from the class of $E'_1$ (see \ref{Hesse 1st blow-up})
by interchanging:
$$m \leftrightarrow 1, \quad n \leftrightarrow \be,\quad p \leftrightarrow a,\quad 
c \leftrightarrow \ga.$$ 
(with $2, 3, \al, b$ not changed). Therefore, $F$ is also a sum of $F$-curves.

\begin{rmk}\label{K1}
One can see that the intersections with $(K+\De)$ add up. We have:
$$K_{\MM_{0,12}}=\frac{1}{11}\big(-2\de_2+5\de_3+10\de_4+13\de_5+14\de_6\big),$$
$$(K+\De)\cdot C=39,\quad (K+\De)\cdot F=16,\quad (K+\De)\cdot E_1=17,\quad$$
$$(K+\De)\cdot E_2=2,\quad (K+\De)\cdot E_3=2.$$

Note that $K\cdot C=7$, and thus the usual lower bound for the dimension of the Hom scheme $\Hom(\PP^1,\MM_{0,12})$ at $[C]$ (\ref{lower bound}) is $-1$. This shows that $K\cdot C$ satisfies the necessary lower-bound for $C$ to be rigid (although not by a large margin). The same computation also shows that the components of the characteristic $3$ fiber are in fact not rigid.  This happens also in our second example (see Rmk. \ref{K2}). 
\end{rmk}


\section{Rigid matroids}\label{rigid matroids}

The calculation above shows that many rigid curves defined using configurations of points
often break arithmetically simply because the configuration itself has primes of bad reduction. Here we use the following definition:

\begin{defn}
Let $\cL$ be a finite connected matroid of rank $r$ (see \cite{KT} for the definition of connected matroids), let $R$ be a domain with the field of fractions $K$, 
and let $\p\subset R$ be a prime ideal. 
We say that $\cL$ has $\p$ as a prime of bad reduction over $R$ if there exists a family of sections
$$p_i:\,\Spec R_\p\to\bP^{r-1}_{R_\p}$$ 
such that the matroid of the configuration of points
$\{p_i(0)\}\subset\bP^{r-1}_K$ is isomorphic to~$\cL$, while the matroid of the specialization
$\{p_i(\p)\}\subset\bP^{r-1}_{R/\p}$ has rank $r$, is connected, and is a strict subset of $\cL$
(i.e.,~the specialization has less linearly independent subsets).

A matroid $\cL$ is called {\em arithmetically rigid} if it has no primes of bad reduction.
\end{defn}

\begin{ex}\label{SB}
Let $\cL$ be a matroid of rank~$2$. Without loss of generality we can assume that $\cL$
is uniform, i.e.~any two points are linearly independent. 
Of course $\cL$ can not be arithmetically rigid (unless it has at most three points), so let's fix a realization of $\cL$ over a field
of fractions~$K$ of a Dedekind domain $R$, i.e.~a collection of elements $p_1,\ldots,p_n\in K$. 
We can extend them to a collection of sections
$p_i:\,\Spec R\to \bP^1_R$. Primes of bad reduction correspond to places where two sections intersect.
If there are no places of bad reduction then we can arrange so that $p_0=0$, $p_1=1$, $p_2=\infty$.
Then the remaining points $p_3,\ldots,p_n$ form what's known as a {\em clique of exceptional units}:
we have 
$$p_i,\ 1-p_i,\ p_i-p_j\in R^*\quad\hbox{\rm for any $i,j$}.$$
\end{ex}

How about rank~$3$? We can obtain examples by simply considering rigid matroids which are realizable
only in prime characteristic, e.g.~the Fano matroid. So let's impose an extra condition that a matroid
is realizable in characteristic~$0$. 

It is easy to see using Lafforgue's theory \cite{La}
of compact moduli spaces of hyperplane arrangements that if $\cL$ is not rigid then $\cL$ is not arithmetically rigid.
In other words, if there exists an algebraic curve $B$ over an algebraically closed field of characteristic $0$
and sections $p_1,\ldots,p_n:\,B\to \bP^{r-1}_B$, such that the matroid of $\{p_i(b)\}$ is isomorphic to $\cL$ for any $b\in B$
and yet configurations $\{p_i(b)\}$ and $\{p_i(b')\}$ are not projectively equivalent for some $b,b'$ then $B$ is not proper
and one of the infinite points is a prime of bad reduction.

Quite surprisingly, we know only two examples of arithmetically rigid matroids of rank~$3$
realizable in characteristic~$0$.
One is a uniform matroid ($4$ general points in~$\bP^2$),
which is useless for our purposes.
 Another is a quite remarkable configuration that represents the golden ratio. 
We learned about it from the book \cite{G}.

\begin{ex}
Let $\tau$ be  a root of $\tau^2-3\tau+1=0$. Let $R=\ZZ[\tau]$ and $K=\QQ(\tau)$.
Consider the following configuration of nine points of $\PP^2_R$ (in coordinates $X,Y,Z$): 
$$a=(1,0,0), \quad   b=(0,1,0), \quad  c=(0,0,1),$$
$$d=(1,1,1), \quad   e=(2-\tau,1-\tau,1), \quad  f=(1,1,0),$$
$$g=(0,1-\tau,1), \quad   h=(1,0,1), \quad  i=(1,\tau,0).$$

Consider the  following lines (see Fig.~\ref{adfgsdfsfb}):
\begin{align*}
L_1=abif: &\quad Z=0,\\
L_2=ach: & \quad Y=0,\\ 
L_3=age: &\quad Y=(1-\tau)Z,\\
L_4=bcg: &\quad X=0,\\ 
L_5=bdh: &\quad X=Z,\\ 
L_6=cei: & \quad Y=\tau X,\\ 
L_7=cdf: &\quad Y=X,\\ 
L_8=dgi: &\quad Y=\tau X-(\tau-1)Z=0,\\
L_9=efh: &\quad Y=X-Z.
\end{align*}

\begin{figure}[htbp]
\includegraphics[width=5in]{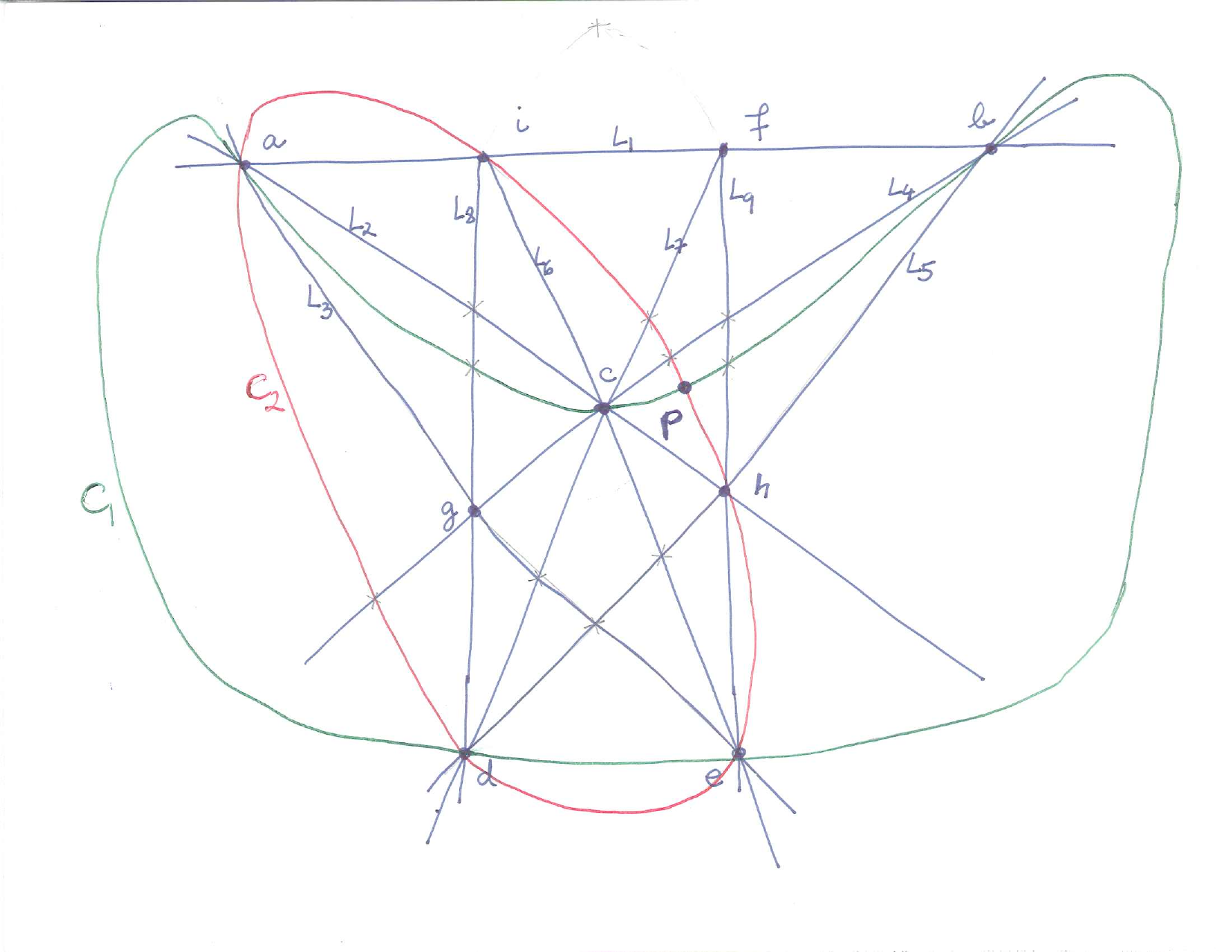}
\caption{Gr\"unbaum configuration (with added conics $C_1$ and $C_2$)}\label{adfgsdfsfb}
\end{figure}

The only possible non-trivial $3\times3$ minors of the $3\times 9$ matrix of coordinates of points $a,\ldots,i$
are $\pm1$, $\pm \tau$, $\pm(\tau-1)$, and $\pm(\tau-2)$.
All these minors are units in $\bZ[\tau]$, and therefore 
the Gr\"unbaum configuration has no primes of bad reduction in $\bZ[\tau]$.
It is quite easy to check (see \cite{G}) that the Gr\"unbaum configuration is rigid
and $\bQ[\tau]$ is its field of definition. Therefore, this matroid is arithmetically rigid.

We find it remarkable that if we pick any smooth conic through five of the nine points, the construction in Thm. \ref{blowupdescr} and Cor. \ref{class1} gives a morphism:
$$\PP^1_R\ra\MM_{0,9;R},$$
whose generic fiber $\PP^1_K$ (when base-changed to $\CC$) is a moving curve on $\MM_{0,9}$. For example, if we let $C_1$ be the conic through the points $a, b, c, d, e$, by Cor. \ref{class1}, we have that the numerical class of $C_1$ is given by:
\begin{align*}
C_1&=\De_{b,f,i}+\De_{a,f,i}+\De_{c,h}+\De_{a,h}+\De_{e,g}+\De_{a,g}+\De_{b,g}+\\
&+\De_{c,g}+\De_{b,h}+\De_{d,h}+\De_{c,f}+\De_{d,f}+\De_{c,i}+\De_{e,i}+\\
&+\De_{d,g,i}+\De_{e,f,h}+\De_{g,i}+\De_{f,h}+2\De_{f,g}+2\De_{g,h}+2\De_{h,i}
\end{align*}
Since $$K_{\MM_{0,9}}=\frac{1}{4}\big(-\de_2+\de_3+2\de_4\big)$$ it follows that $K\cdot C_1=-4$ and thus by (\ref{lower bound}), the curve $C_1$ moves on $\MM_{0,9}$.

Similarly, if we let $C_2$ be the conic through the points $a, d, e, h, i$, by Cor. \ref{class1}, we have that the numerical class of $C_2$ is given by:
\begin{align*}
C_2&=\De_{b,f,i}+\De_{a,b,f}+\De_{a,c}+\De_{c,h}+\De_{a,g}+\De_{e,g}+\De_{b,d}+\\
&+\De_{b,h}+2\De_{b,c,g}+\De_{b,e}+\De_{c,d,f}+\De_{c,f}+\De_{c,e}+\\
&+\De_{c,i}+\De_{d,g}+\De_{g,i}+\De_{e,f}+\De_{f,h}+2\De_{f,g}+\De_{g,h}
\end{align*}
It follows that $K\cdot C_2=-3$ and thus by (\ref{lower bound}), the curve $C_2$ moves as well. 

\end{ex}

\section{Arithmetic break of a ``Two Conics'' curve - part I}\label{break of 2-conics curve}

Now we are going to construct a curve in $\oM_{0,12}$ by applying a ``Two conics'' construction. We will use the configuration in Example \ref{SB}.
Up to symmetries, there is only one choice:
consider the following two (smooth) conics:
\begin{align*}
C_1=abcde:& \quad XY-\tau XZ+(\tau-1)YZ=0,\\
C_2=adehi: &\quad (\tau-2)Y^2+Z^2-(\tau-1)XY-XZ+YZ=0.
\end {align*}

Let $p$ be the fourth intersection point of $C_1$ and $C_2$:
$$p=(\frac{1-\tau}{2},-\tau,1)=(1,2(\tau-1), 2(2-\tau)).$$

Let $S$ be the blow-up of $\PP^2_{R}$ at $p$ and let $E_p\cong\PP^1_R$ be the exceptional divisor. Consider the natural fibration $\pi: S\ra\PP^1_R$ that resolves the projection from $p$. 
The proper tranforms of the nine lines, the two conics and the exceptional divisor $E_p$ give twelve sections of $\pi$. After blowing up the $R$-points where the sections intersect, one obtains a family of stable $12$-pointed rational curves over $\PP^1_K$ that intersects the interior of $\M_{0,12}$. Denote by $C$ this curve in $\M_{0,12}$. According to Theorem~\ref{rigid map}, the corresponding map $\PP^1_K\ra\MM_{0,12}$ is rigid.
Despite the fact that the Gr\"unbaum arrangement is arithmetically rigid,
we will prove that $C$ breaks in characteristic $5$ into several components, 
each a sum of $F$-curves. 

\begin{notn}
We will denote $1,\ldots,9$, $u$, $v$, $p$ the markings corresponding to the sections given by $L_1,\ldots, L_9$, $C_1$, $C_2$, $E_p$. 
\end{notn}

\Trick{The class of $C$.} 
One can compute the numerical class of $C$ from Prop. \ref{class2}:
\begin{align*}
C&=\De_{1,2,3,u,v}+\De_{2,4,6,7,u}+\De_{3,6,9,u,v}+\De_{5,7,8,u,v}+\De_{1,4,5,u}+\\
&+\De_{1,6,8,v}+\De_{2,5,9,v}+\De_{1,7,9}+\De_{3,4,8}+\De_{2,8}+\De_{3,5}+\De_{3,7}+
\De_{4,9}+\\
&+2\De_{4,v}+\De_{5,6}+\De_{7,v}+\De_{8,9}+
\De_{8,u}+\De_{9,u}+\De_{u,p}+\De_{v,p}.
\end{align*}

\Trick{Family $S\ra\PP^1_R$ in local coordinates.\label{set-up}} 
The blow-up $S$ of $p$ is an arithmetic surface in $\PP^2_R\times \PP^1_R$ with equation:
$$\big(Y-2(\tau-1)X\big)v=\big(Z-2(2-\tau)X\big)u,$$
(where $u, v$ are the coordinates on $\PP^1_R$). The exceptional divisor $E_p$ is cut by 
$$Y-2(\tau-1)X=Z-2(2-\tau)X=0.$$

We will need to consider both charts $v=1$ and $u=1$.

\Trick{Chart $v=1$.\label{chart v=1}}   
The proper transforms of the twelve sections are :
\begin{align*}
p &:\quad Z=2(2-\tau)X,\\
1 &: \quad Z=0,\\
2 &:\quad 2(\tau-1)X+\big(Z-2(2-\tau)X\big)u=0,\\
3 &:\quad 2(\tau-1)X+\big(Z-2(2-\tau)X\big)u=(1-\tau)Z,\\
4 &:\quad X=0,\\
5 &:\quad X=Z,\\
6 &:\quad 2(\tau-1)X+\big(Z-2(2-\tau)X\big)u=\tau X,\\
7 &:\quad 2(\tau-1)X+\big(Z-2(2-\tau)X\big)u=X,\\
8 &:\quad 2(\tau-1)X+\big(Z-2(2-\tau)X\big)u=\tau X-(\tau-1)Z,\\
9 &:\quad 2(\tau-1)X+\big(Z-2(2-\tau)X\big)u=X-Z,\\
u &:\quad X\big(\tau+u\big)+Z\big(\tau-1\big)u=0,\\
v &:\quad X\big((-2\tau+6)u^2+(-\tau+5)u+1\big)+Z\big((\tau-2)u^2+u+1\big)=0
\end{align*}

\Trick{Chart $u=1$.\label{chart u=1}} 
The proper transforms of the twelve sections are :
\begin{align*}
p &:\quad Y=2(\tau-1)X\\
1 &: \quad Yv+2X\big((1-\tau)v+2-\tau\big)=0,\\
2 &:\quad Y=0,\\
3 &:\quad Yv+2X\big((1-\tau)v+2-\tau\big)=(2-\tau)Y,\\
4 &:\quad X=0,\\
5 &:\quad Yv+2X\big((1-\tau)v+2-\tau\big)=X,\\
6 &:\quad Y=\tau X,\\
7 &:\quad Y=X,\\
8 &:\quad Yv+2X\big((1-\tau)v+2-\tau\big)=(\tau-1)X-(\tau-2)Y,\\
9 &:\quad Yv+2X\big((1-\tau)v+2-\tau\big)=X-Y,\\
u &:\quad -X\big(1+\tau v\big)+Y\big(\tau-1\big)v=0,\\
v &:\quad X\big((2-2\tau)v^2+(7-4\tau)v+7-3\tau\big)+Y\big(v^2+v+(\tau-2)\big)=0
\end{align*}

\Trick{Break of the curve $C$ in characteristic $5$ (outline).}

This is similar to the argument in Section \ref{break of hypergraph curve}. Consider the induced rational map: 
$$\PP^1_R\dra\M_{0,12}.$$

In order to resolve this map, one has to blow-up the arithmetic surface $\PP^1_R$ several times along the characteristic $5$ fiber $\PP^1_{\FF_5}$ of $\PP^1_R\ra\Spec R$ (at $\tau=-1$). We now outline the strategy. We blow-up of the arithmetic surface $\PP^1_R$ along four distinct points in $\PP^1_{\FF_5}$; in chart $v=1$ they are given by: 
$$u=0,\quad u=2,\quad u=-1,\quad u=\infty.$$

Let the corresponding exceptional divisors be $E_1$, $E_2$, $E_3$, $E_4$.  We blow-up one more point on $E_2$, resulting in an exceptional curve $E_5$ (See Fig.~\ref{pic_c}). We let $T$ be the resulting arithmetic surface. We will abuse notations and denote by $E_2$ the proper transform of $E_2$ in $T$.

\begin{figure}[htbp]
\includegraphics[width=4in]{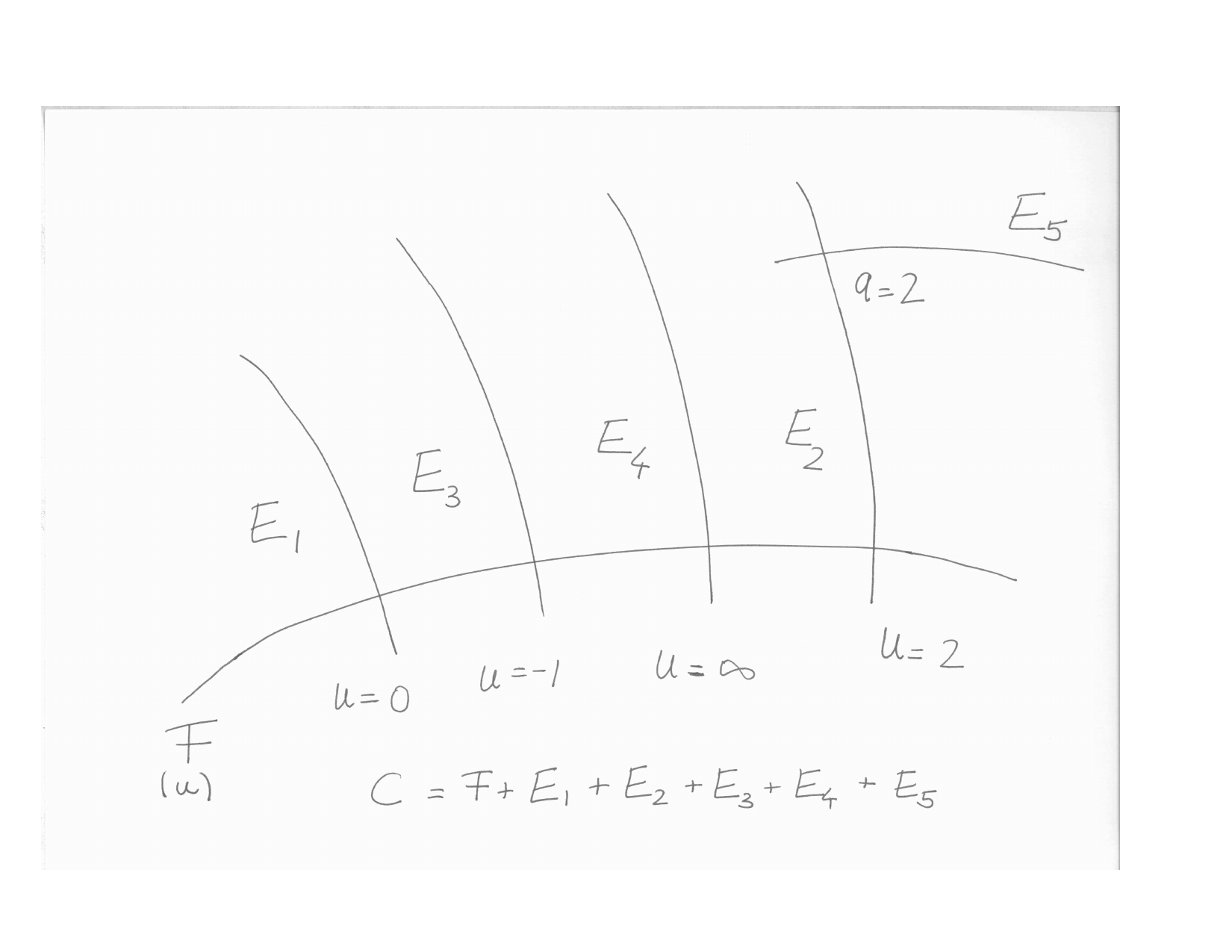}
\caption{Components of the characteristic $5$ fiber.}\label{pic_c}
\end{figure}

\begin{notn}
Let $F$ denote the proper transform of the characteristic $3$ fiber $\PP^1_{\FF_3}$. 
\end{notn}

We construct a family $\pi: S\ra T$ with $12$ sections, such that over an open set of the characteristic zero fiber of $T\ra\Spec R$, this is the universal family. It is 
easy to see that along a dense open $T^0\subseteq T$  the sections are disjoint and thus define a map $T^0\ra\M_{0,12}$. Moreover, we can enlarge $T^0$ such that that its intersection with each of $F$, $E_i$ is non-empty. Simply blow-up the total space $S$ along the sections that become equal over the generic points of these curves; occasionally one will have to blow-up several times. This is an easy calculation, which we omit. The result is a family over $T^0$ 
of semistable rational curves with twelve disjoint sections. By contracting unstable components in fibers, we obtain a family of stable curves over $T^0$. The maps 
$T^0\cap F\ra\MM_{0,12}$, $T^0\cap E_i\ra\MM_{0,12}$ extend uniquely to morphisms $$F\ra\MM_{0,12},\quad E_i\ra\MM_{0,12}.$$

Just as in Section \ref{break of hypergraph curve}, from the universal family $S\ra T^0$ restricted to $T^0\cap F$, $T^0\cap E_i$ we can determine the classes of the curves $F$, $E_i$. One will eventually have to do further blow-ups to resolve the map $\PP^1_R\dra\M_{0,12}$, but since one can check directly the equality of numerical classes:
$$C=F+E_1+ E_2+ E_3+E_4+E_5,$$
this proves that any other extra components in the characteristic $5$ fiber will map constantly to $\MM_{0,12}$.   It is easy to see that each of the curves $E_i$ is a sum of  $F$-curves. In the next section we give a similar argument that shows $F$ is a sum of $F$-curves. 

\Trick{The class of $F$.\label{main comp}}
As  $\tau=-1$, if $u$ is general, the equations in (\ref{chart v=1}) describe the curve $F$ as a curve that lies in the boundary component:
$$\de_{578p}\cong\MM_{0,9}\times\MM_{0,5}.$$

As a result we have an equality of numerical classes $F=F'+F''$,
where $F'$, resp., $F''$ are the two projections of $F$ onto $\MM_{0,9}$ and $\MM_{0,5}$ respectively. 



\Trick{The class of $F'$.} This can be determined directly from the equations in (\ref{main comp}). Alternatively, one can use the fact that in characteristic $5$ the conics $C_1$ and $C_2$ become tangent at $d$ (hence, $p=d$) (see Section \ref{tangent}). The class of $F'$ as a curve in $$\M_{0,9}=\M_{0, \{1,2,3,4,6,9,u,v,x\}},$$ (where $x$ is the attaching point) is given by:
\begin{align*}
F'&=\De_{1,2,3,u,v}+\De_{3,6,9,u,v}+\De_{2,4,6,u}+\De_{u,v,x}+\De_{1,4,u}+\\
&\De_{1,6,v}+\De_{2,9,v}+\De_{1,9}+\De_{3,4}+\De_{4,9}+2\De_{4,v}+\De_{9u}.
\end{align*}

As a curve in $\M_{0,12}$, the class of $F'$ is:
\begin{align*}
F'&=\De_{1,2,3,u,v}+\De_{3,6,9,u,v}+\De_{5,7,8,u,v,p}+\De_{2,4,6,u}-2\De_{5,7,8,p}+\\
&+\De_{1,6,v}+\De_{1,4,u}+\De_{2,9,v}+\De_{1,9}+\De_{3,4}+\De_{4,9}+
2\De_{4,v}+\De_{9u}.
\end{align*}

\Trick{The class of $F''$ (see Fig.~\ref{pic_d}).} We use the equations in (\ref{main comp}). We blow-up the total space $S$ along $\tau+1=Z-X=0$ in order to separate the sections $5, 7, 8, p$. In local coordinates
$Z-X=(\tau+1)W$, with exceptional divisor cut by $\tau=-1$ and new coordinate $W$. The proper transforms of the four sections are given by:
\begin{align*}
p &:\quad W=(2-\tau)X,\\
5 &:\quad  W=0,\\
7 &:\quad  X(\tau-2)+\big(W+X(\tau-2)\big)u=0,\\
8 &:\quad  X(\tau-2)+W(\tau-1)+\big(W+X(\tau-2)\big)u=0.
\end{align*}

\begin{figure}[htbp]
\includegraphics[width=4in]{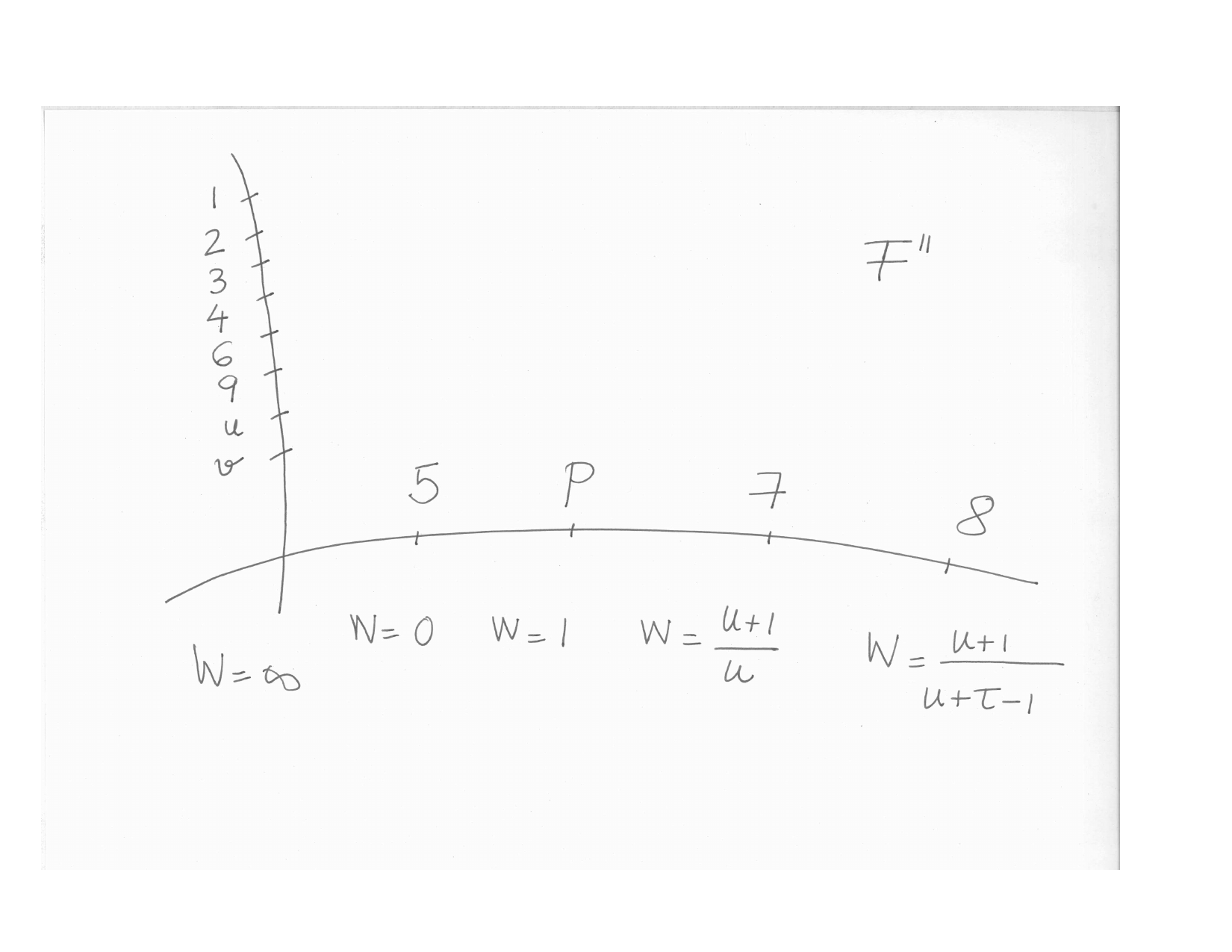}
\caption{The component $F''$.}\label{pic_d}
\end{figure}

The ``attaching section" is given by $X=0$. As a curve in $\M_{0,5}$, $F''$ has class  $\De_{5,7,8}+\De_{5,7,p}+\De_{5,8,p}+\De_{7,8,p}$.
As classes in in $\MM_{0,12}$, we have:
$$F''=-2\De_{5,7,8,p}+\De_{5,7,8}+\De_{5,7,p}+\De_{5,8,p}+\De_{7,8,p}.$$

\Trick{The class of $E_1$ (see Fig.~\ref{pic_e}).} We use the notations from (\ref{chart v=1}). 
We blow-up $\PP^1_R$ at the point $\tau=-1$, $u=0$. In local coordinates: $u=(\tau+1)a$ with exceptional divisor $E_1: \tau=-1$ and new coordinate $a$. The proper transforms of the twelve sections have equations:
\begin{align*}
p &: \quad Z=2(2-\tau)X,\\
1 &: \quad Z=0,\\
2 &:\quad 2(\tau-1)X+\big(Z+2(\tau-2)X\big)(\tau+1)a=0,\\
3 &:\quad 2(\tau-1)X+\big(Z+2(\tau-2)X\big)(\tau+1)a=(1-\tau)Z,\\
4 &:\quad X=0,\\
5 &:\quad X=Z,\\
6 &:\quad 2(\tau-1)X+\big(Z+2(\tau-2)X\big)(\tau+1)a=\tau X,\\
7 &:\quad (\tau-2)X+\big(Z+2(\tau-2)X\big)a=0,\\
8 &:\quad 2(\tau-1)X+\big(Z+2(\tau-2)X\big)(\tau+1)a=\tau X-(\tau-1)Z,\\
9 &:\quad 2(\tau-1)X+\big(Z+2(\tau-2)X\big)(\tau+1)a=X-Z,\\
u &:\quad X\big(\tau+(\tau+1)a\big)+Z\big(\tau^2-1\big)a=0,\\
v &:\quad X\big(10a^2+(\tau+6)a+1\big)+Z\big(5(\tau-1)a^2+(\tau+1)a+1\big)=0.
\end{align*}

Along $E_1: \tau=-1$ the sections become:
\begin{align*}
p=5=8 &: \quad Z=X,\\
1=9 &: \quad Z=0,\\
2=4=6=u &:\quad X=0,\\
3 &:\quad X=2Z,\\
7 &:\quad (2-a)X+aZ=0,\\
v &: \quad Z=-X.
\end{align*}
\begin{figure}[htbp]
\includegraphics[width=4in]{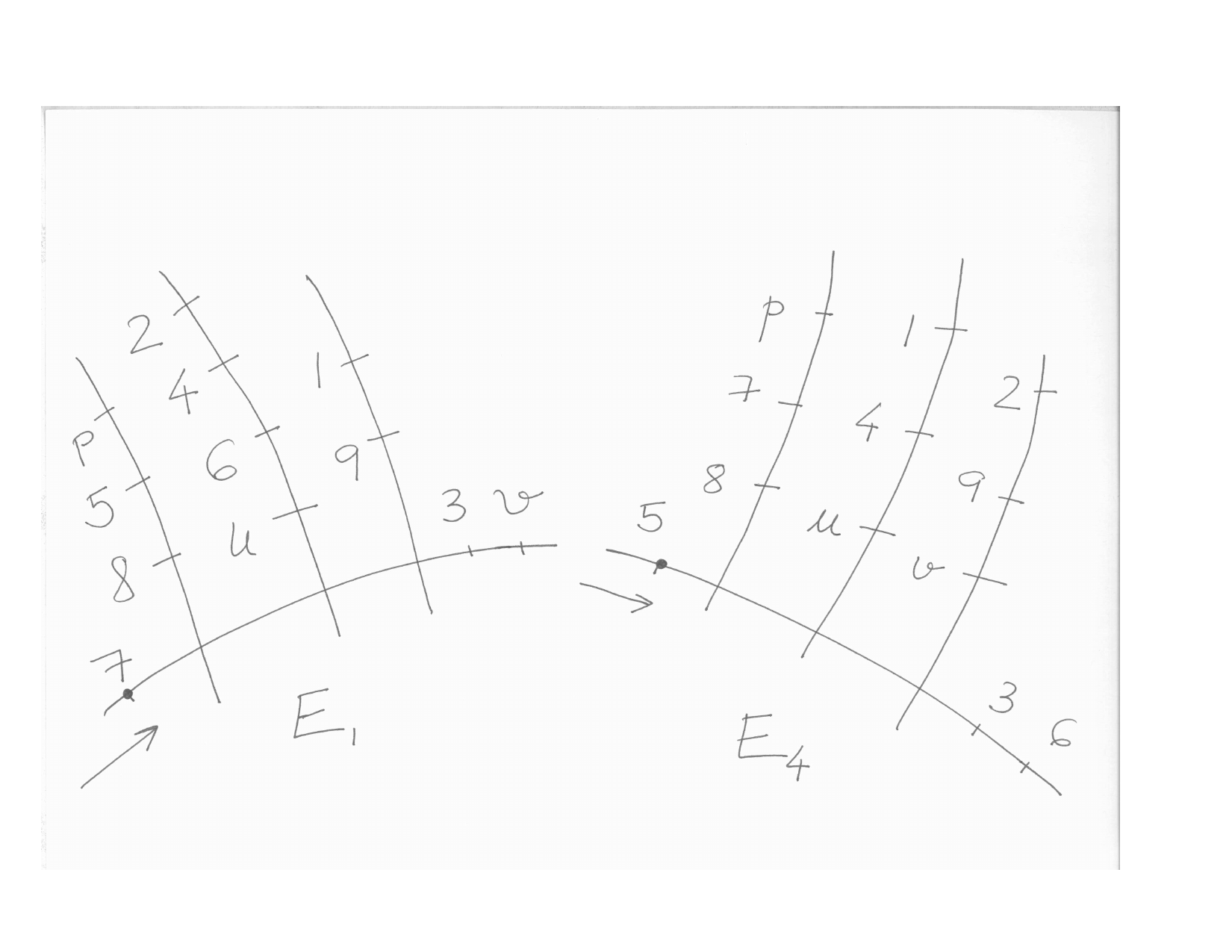}
\caption{The components $E_1$ and $E_4$ of the characteristic $5$ fiber.}\label{pic_e}
\end{figure}

It follows that $E_1$ has numerical class:
$$E_1=\De_{2,4,6,7,u}-\De_{2,4,6,u}+\De_{5,7,8,p}+\De_{1,7,9}-\De_{5,8,p}-\De_{1,9}+
\De_{3,7}+\De_{7,v}.$$

\Trick{The classes $E_2$ and $E_5$ (see Fig.~\ref{pic_f})} We use the notations from (\ref{chart v=1}). We blow-up $\PP^1_R$ at the point $\tau=-1$, $u=2$.  In local coordinates: $u=2+(\tau+1)a$ with exceptional divisor $E_2: \tau=-1$ and new coordinate $a$. The proper transforms of the twelve sections have equations:
\begin{align*}
p &: \quad Z=2(2-\tau)X,\\
1 &: \quad Z=0,\\
2 &:\quad 2(\tau-1)X+\big(Z+2(\tau-2)X\big)\big((\tau+1)a+2\big)=0,\\
3 &:\quad 2(\tau-1)X+\big(Z+2(\tau-2)X\big)\big((\tau+1)a+2\big)=(1-\tau)Z,\\
4 &:\quad X=0,\\
5 &:\quad X=Z,\\
6 &:\quad 2(\tau-1)X+\big(Z+2(\tau-2)X\big)\big((\tau+1)a+2\big)=\tau X,\\
7 &:\quad 2(\tau-1)X+\big(Z+2(\tau-2)X\big)\big((\tau+1)a+2\big)=X,\\
8 &:\quad (3\tau-7)X+Z+\big(Z+2(\tau-2)X\big)a=0,\\
9 &:\quad 2(\tau-1)X+\big(Z+2(\tau-2)X\big)\big((\tau+1)a+2\big)=X-Z,\\
u &:\quad X\big((\tau+1)a+\tau+2\big)+Z(\tau-1)\big((\tau+1)a+2\big)=0,\\
v &:\quad X\big(10a^2+(38-7\tau)a+35-10\tau\big)+Z\big(5(\tau-1)a^2+(9\tau-11)a+4\tau-5\big).
\end{align*}

Along $E_2: \tau=-1$ the sections become:
\begin{align*}
p=5=7 &: \quad Z=X,\\
1=6=v &: \quad Z=0,\\
2 &: \quad X=2Z,\\
3=4 &: \quad X=0,\\
9=u &:  \quad Z=-X.\\
8 &: \quad -aX+(a+1)Z=0.
\end{align*}
\begin{figure}[htbp]
\includegraphics[width=4in]{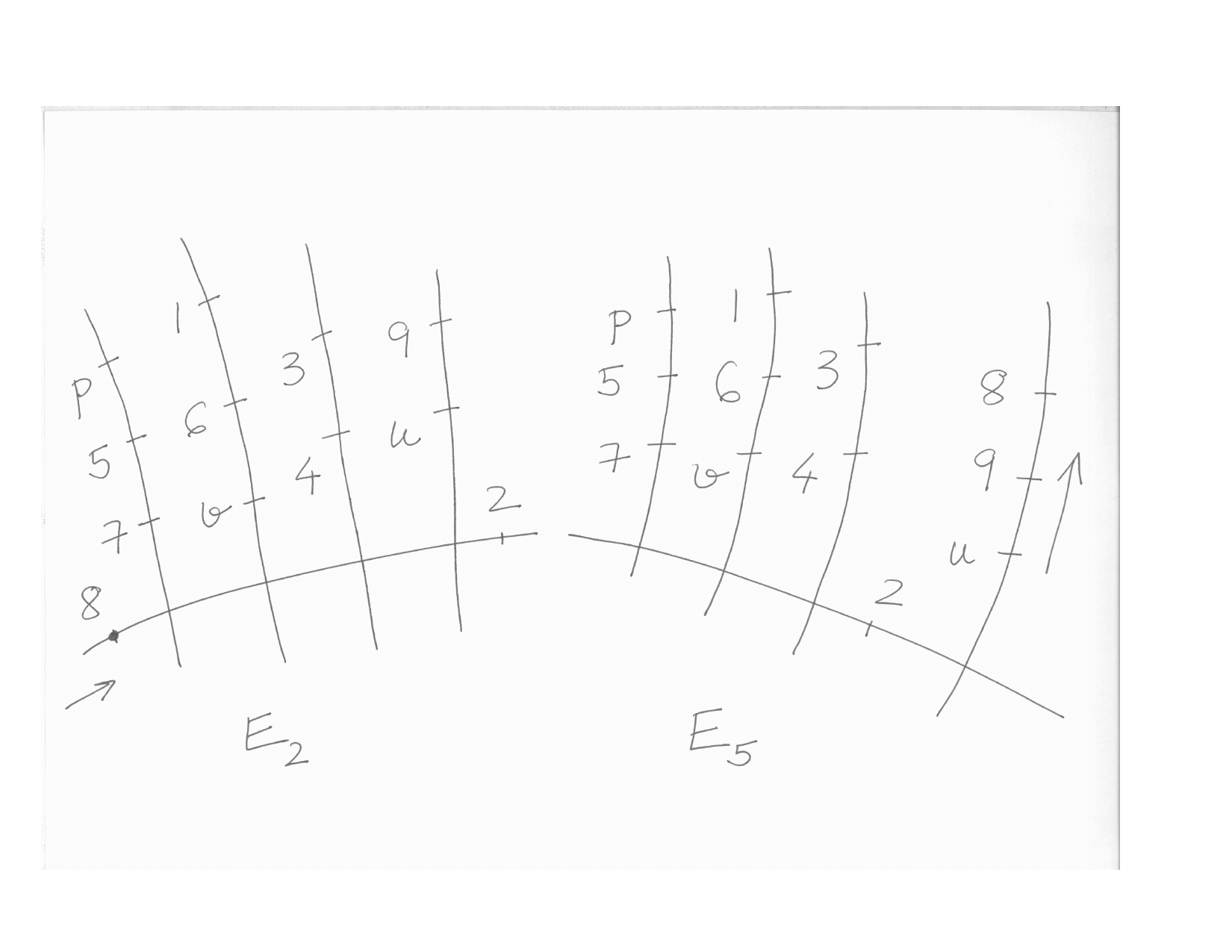}
\caption{The components $E_2$ and $E_5$ of the characteristic $5$ fiber.}\label{pic_f}
\end{figure}

It follows that $E_2$ has class:
$$E_2=\De_{1,6,8,v}+\De_{5,7,8,p}-\De_{1,6,v}-\De_{5,7,p}+\De_{3,4,8}+\De_{8,9,u}+
\De_{2,8}-\De_{3,4}-\De_{9,u}.$$

We now blow-up the point $a=2$ on $E_2$. In local coordinates, $a=2+(\tau+1)b$, with exceptional divisor $E_5: \tau=-1$ and new coordinate $b$. The sections $8$, $9$ and $u$ coincide along $E_5$ and after blowing up this locus, the sections are separated at a general point of $E_5$. The class of $E_5$ is the class of an $F$-curve:
$$E_5=-\De_{8,9,u}+\De_{8,9}+\De_{8,p}+\De_{9,p}$$

\Trick{The class of $E_3$ (see Fig.~\ref{pic_g}).} We use the notations from (\ref{chart v=1}). We blow-up $\PP^1_R$ at the point $\tau=-1$, $u=-1$.  In local coordinates: $u=-1+(\tau+1)a$ with exceptional divisor $E_3: \tau=-1$ and new coordinate $a$. The proper transforms of the twelve sections along $E_3$ are given by:

\begin{align*}
p=5=7=8=u=v &: \quad Z=X,\\
1 &: \quad Z=0,\\
2 &:\quad Z=2X,\\
3 &:\quad Z=-X,\\
4=9 &:\quad X=0,\\
6 &:\quad Z=3X.
\end{align*}
\begin{figure}[htbp]
\includegraphics[width=4in]{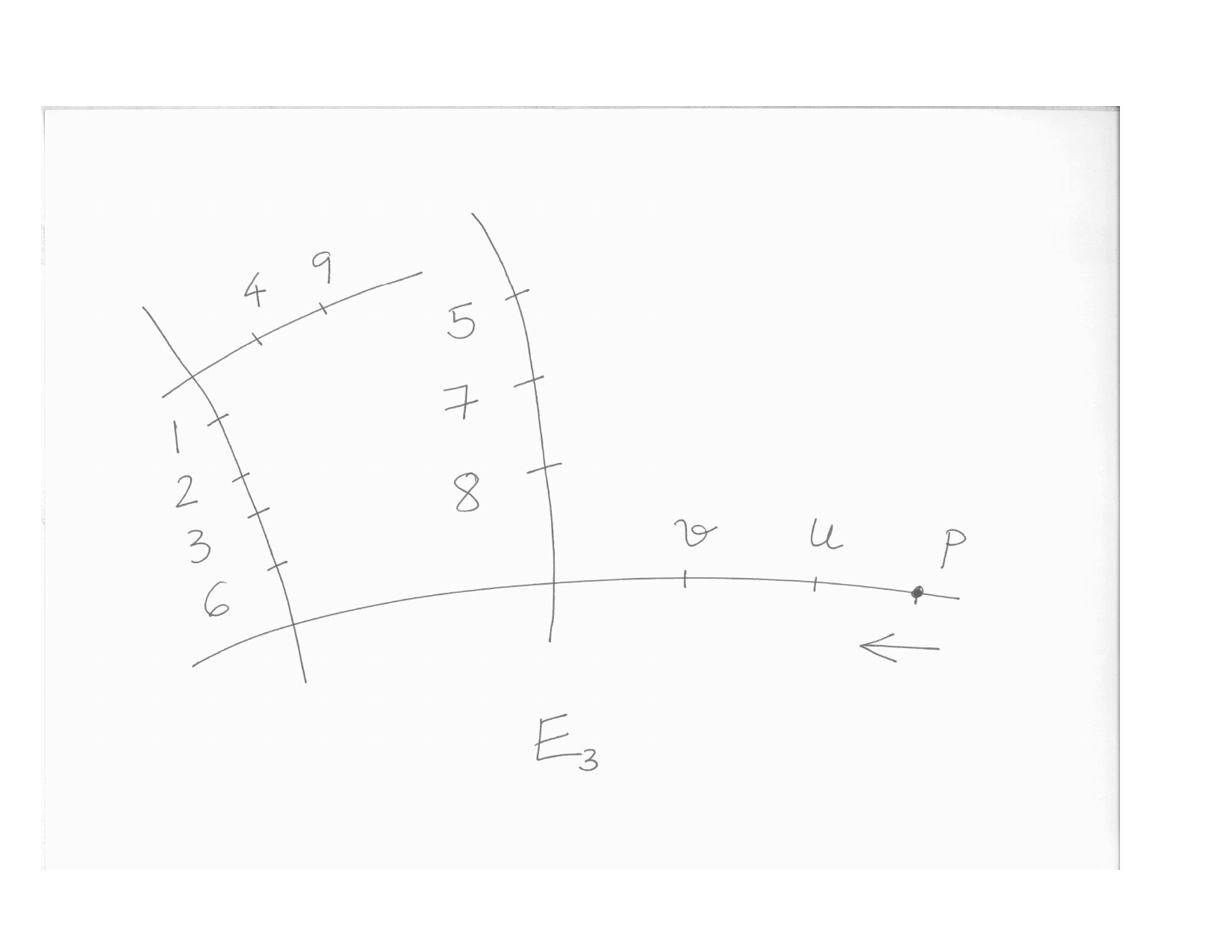}
\caption{The component $E_3$ of the characteristic $5$ fiber.}\label{pic_g}
\end{figure}

We blow-up the total space $S$ along $\tau=-1$, $Z=X$. In local coordinates:
$Z=X+(\tau+1)W$ with exceptional divisor $E: \tau+1=0$ and new coordinate $W$. The proper transforms of the six sections that were meeting along $E_3$ have local equations:
\begin{align*}
p &: \quad  W=(2-\tau)X,\\
5 &: \quad W=0,\\
7 &:\quad Xa(2\tau-3)+W\big(a(\tau+1)-1\big)=0,\\
8 &:\quad Xa(2\tau-3)+W\big(a(\tau+1)+\tau-2\big)=0,\\
u &:\quad Xa\tau+W(\tau-1)\big((\tau+1)a-1\big)=0,\\
v &:\quad X\big(5a^2+(\tau-2)a\big)+W\big(5(\tau-1)a^2+(-3\tau+7)a+\tau-2\big)=0.
\end{align*}

The ``attaching section" is given by $X=0$.  
Along $E_3: \tau+1=0$ we have:
\begin{align*}
p &: \quad  W=3X,\\
5=7=8 &: \quad W=0,\\
u &:\quad W=3aX,\\
v &: \quad W=-aX.
\end{align*}

The sections $5, 7, 8$ are separated when blowing up along $v=\tau+1=0$.
The curve $E_3$ is contained in the boundary components $\de_{123469}$, $\de_{49}$,$\de_{578}$ and it comes from a curve in $\M_{0,6}$, as only the markings $u, v$ move as the parameter $a$ moves along $E_3$ (all other cross-ratios are fixed). It follows that: 
$$E_3=-\De_{5,7,8,u,v,p}+\De_{5,7,8,u,v}+\De_{5,7,8,p}-\De_{5,7,8}+\De_{u,p}+\De_{v,p}$$

\Trick{The class of $E_4$ (see Fig.~\ref{pic_e}).}  We blow-up $\PP^1_R$ at the point $\tau=-1$, $u=\infty$. We use the  chart $u=1$ and the equations of the twelve sections in  (\ref{chart v=1}). We blow-up $\PP^1_R$ at the point $\tau=-1$, $v=0$. Consider the chart given by $v=(\tau+1)b$, with exceptional divisor $E_4: \tau+1=0$ and new coordinate $b$. For all but the $5$'th section, the proper transforms of the sections have the same equations (simply substitute $v=(\tau+1)b$). The proper transform of the $5$'th section has equation: 
$$5:\quad (2-\tau)X+\big(Y-2(\tau-1)X\big)b=0.$$

Along $E_4: \tau+1=0$, we have:
\begin{align*}
p=7=8 &:\quad Y=X\\
1=4=u &: \quad X=0,\\
2=9=v &:\quad Y=0,\\
3 &:\quad Y=2X,\\
5 &:\quad Yb+X(3-b)=0,\\
6 &:\quad Y=-X.
\end{align*}

It follows that $E_4$ has numerical class:
$$E_4=\De_{1,4,5,u}+\De_{2,5,9,v}+\De_{5,7,8,p}-\De_{1,4,u}-\De_{2,9,v}-\De_{7,8,p}+
\De_{3,5}+\De_{5,6}.$$

\begin{rmk}\label{K2}
In the notations of Section \ref{break of 2-conics curve} we have:
$$(K+\De)\cdot C=28,\quad (K+\De)\cdot F'=14,\quad  (K+\De)\cdot F''=2,$$
$$(K+\De)\cdot E_1=3,\quad (K+\De)\cdot E_2=3,\quad (K+\De)\cdot E_3=2,$$ 
$$(K+\De)\cdot E_4=3, \quad (K+\De)\cdot E_5=1.$$

Note that $K\cdot C=6$, and thus the lower bound for the dimension of the Hom scheme $\Hom(\PP^1,\MM_{0,12})$ at $[C]$ is $0$, and as in Rmk. \ref{K1} we note that $C$ is rigid, but not by a large margin. Similarly, the components of the characteristic $5$ fiber are not rigid. 
\end{rmk}


\section{Arithmetic break of a ``Two Conics'' curve - part II}\label{tangent}

We give a different description of the curve $F'$. As of now, the curve $F'$ is coming from a curve in $\MM_{0,9}$, and although we know its class, it is less clear how  it decomposes as a sum of $F$-curves. We note that the curve  $F'$ is  the {\bf irreducible} fiber in characteristic $5$ of a different family, this one over $\Spec(\ZZ)$.  We will prove that this new family breaks in characteristic $3$ into several components, all of which can be written as sums of $F$-curves. 

\

\Trick{Set-up.\label{set-up tangent}}
Consider a configuration similar to the one in Section \ref{break of 2-conics curve}, but one in which we drop the lines $L_5, L_7, L_8$ and impose that the conics $C_1$ and $C_2$ are tangent at $d$ (see Fig.~\ref{asfsdfd}). Namely, consider the following configuration of nine points:
$$a'=(1,0,0),  \quad b'=(0,1,0), \quad c'=(0,0,1), $$
$$d'=(1,1,1),  \quad e'=(3,2,1),\quad  f'=(1,1,0),$$
$$g'=(0,2,1), \quad  h'=(1,0,1), \quad i'=(1,-1,0). $$
(Only in characteristic $5$ this is the same as the previous configuration!) We have:

\begin{figure}[htbp]
\includegraphics[width=5in]{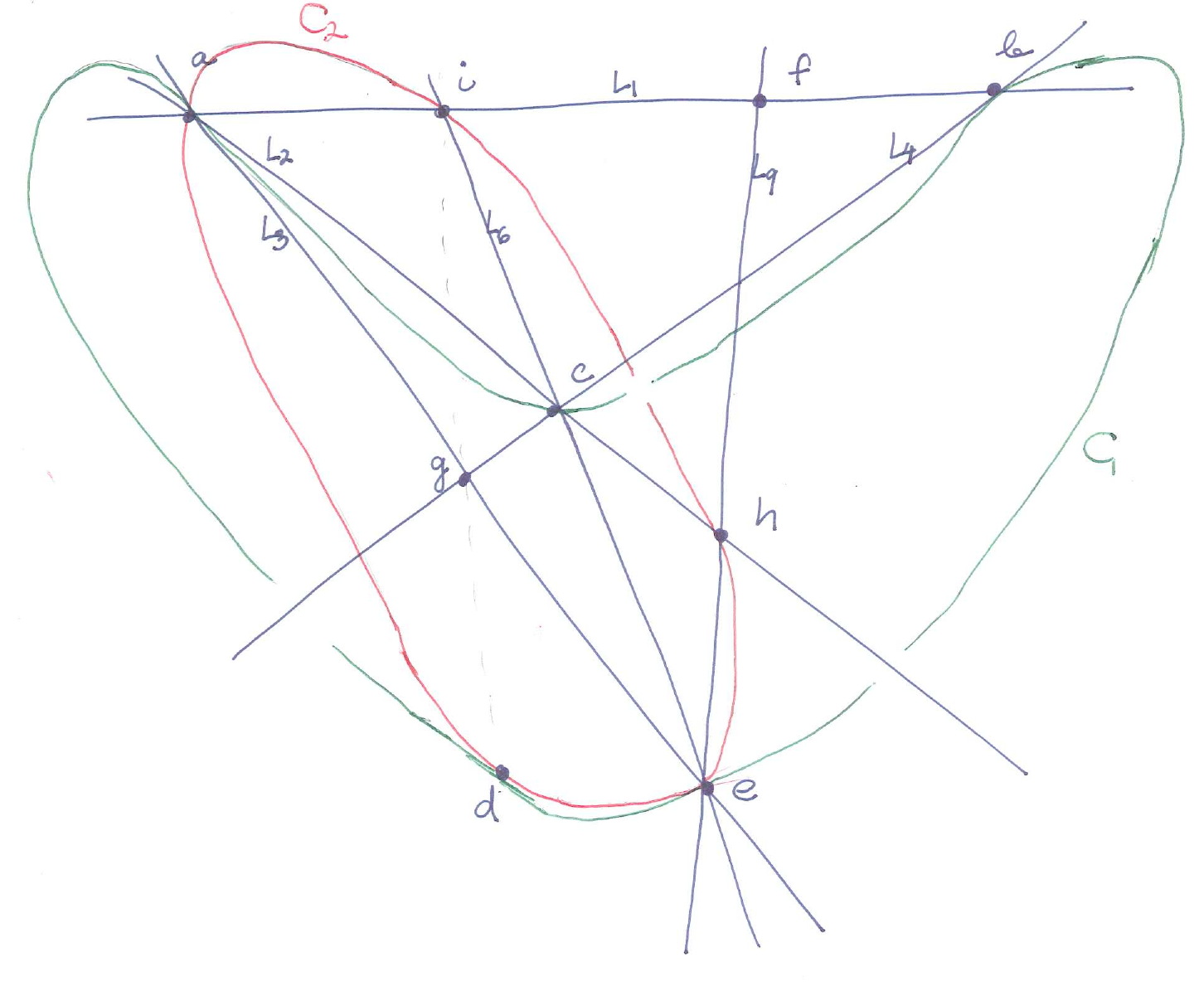}
\caption{New configuration}\label{asfsdfd}
\end{figure}
\begin{align*}
L'_1=a'b'i'f': &\quad Z=0,\\
L'_2=a'c'h': & \quad Y=0,\\ 
L'_3=a'g'e': &\quad Y=2Z,\\
L'_4=b'c'g': &\quad X=0,\\ 
L'_6=c'e'i': & \quad Y=-X,\\ 
L'_9=e'f'h': &\quad \frac{3}{2}Y=Z-X\\
C'_1=a'b'c'd'e':& \quad 2XY-3XZ+YZ=0,\\
C'_2=a'd'e'h'i': &\quad \frac{3}{4}Y^2+Z^2+ \frac{3}{4}XY-XZ-\frac{3}{2}YZ=0.
\end{align*}

Note that this configuration of lines and (tangent at $d$) conics is now rigid. Using the pencil of lines through the point $d$, we obtain as before a curve in $\M_{0,9}$. 
More precisely, let $S'$ be the blow-up $\PP^2_{\ZZ}$ at $d$ and let $E_x$ be the exceptional divisor. There are nine sections of $S'\ra\PP^1_{\ZZ}$ given by the proper transforms of the lines and conics, as well as the exceptional divisor $E_x$.  This induces a rational map: 
$$\PP^1_{\ZZ}\dra\M_{0,9}=\M_{0,\{1,2,3,4,6,9,u,v,x\}}$$
with $F'$ being the image of the morphism $\PP^1_{\QQ}\ra\M_{0,9}$.

\Trick{Breaking in characteristic $3$ (outline).}
We work on $\M_{0,9}=\M_{0,\{1,2,3,4,6,9,u,v,x\}}$. This is similar to the arguments in 
Section \ref{break of hypergraph curve} and Section \ref{break of 2-conics curve}. 
Consider the induced rational map: $$\PP^1_{\ZZ}\dra\M_{0,9}.$$

In order to resolve this map, one has to blow-up the arithmetic surface $\PP^1_{\ZZ}$ several times along the characteristic $3$ fiber $\PP^1_{\FF_3}$ of $\PP^1_{\ZZ}\ra\Spec \ZZ$. 
We first blow-up $\PP^1_{\ZZ}$ at one point in $\PP^1_{\FF_3}$, resulting in an exceptional divisor $E_1$. 
\begin{notn}
Let $G$ denote the proper transform of the characteristic $3$ fiber $\PP^1_{\FF_3}$. 
\end{notn}

Next, we blow-up the intersection point of $G$ and $E_1$, resulting in an exceptional divisor $E_2$. We blow-up another  point in $E_1$ and we let $E_3$ denote the corresponding exceptional divisor (see Fig. \ref{pic_h}).
\begin{figure}[htbp]
\includegraphics[width=4in]{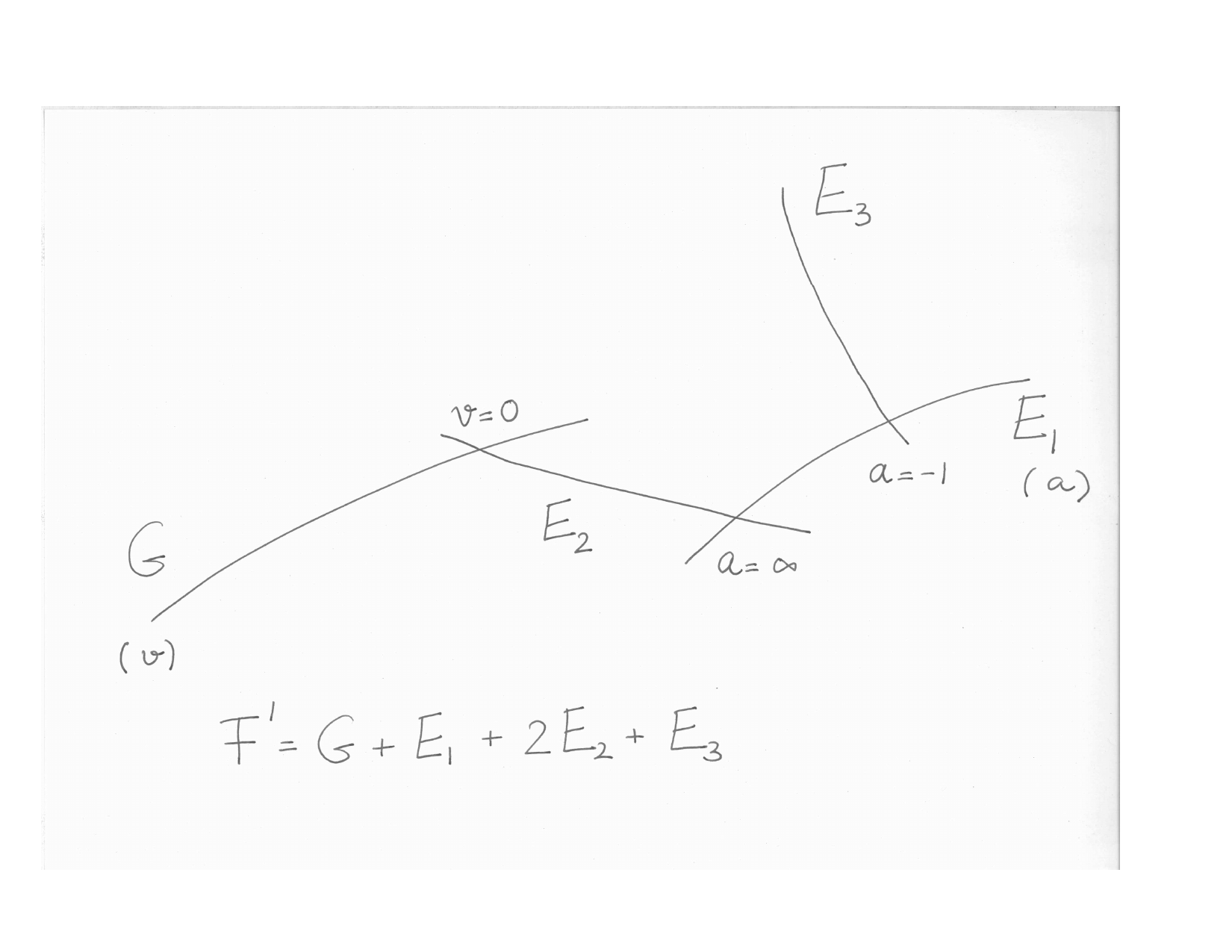}
\caption{The components of the characteristic $3$ fiber.}\label{pic_h}
\end{figure}

We let $T'$ be the resulting arithmetic surface. We abuse notations and denote by $E_1$ the proper transform of $E_1$ in $T'$.
Just as in Section \ref{break of 2-conics curve}, 
from the $\PP^1$-bundle  $S'\ra\PP^1_{\ZZ}$, we construct a family over $T'$ with nine sections, such that over a dense open set ${T'}^0$, this gives the universal family. 
Moreover, ${T'}^0$ intersects non-emptily each of curves $G$, $E_i$. Therefore, one has morphisms: $$G\ra\MM_{0,9},\quad E_i\ra\MM_{0,9},$$
As before, one can determine the classes of $G$, $E_i$ and check directly that:
$$F'=G+E_1+ 2E_2+ E_3.$$

This proves that any other extra components in the characteristic $3$ fiber will map constantly to $\MM_{0,9}$. Note that the exceptional divisor $E_2$ appears in this fiber with multiplicity $2$, since we blow-up a node of the fiber. It is easy to see that each of the curves $G$, $E_i$ is a sum of  $F$-curves. 

\Trick{Local coordinates on $S'$.\label{local coord on S'}} 
Recall that $S'$ is the blow-up of $\PP^2_{\ZZ}$ at $d=(1,1,1)$. This is an arithmetic threefold in $\PP^2_{\ZZ}\times \PP^1_{\ZZ}$ with local equation in $\PP^2_{\ZZ}\times \AA^1_{\ZZ}$ given by:
$$Z=X+\big(Y-X\big)v.$$
(Here $X,Y,Z$ are the coordinates on $\PP^2_{\ZZ}$ and $v$ is the coordinate on $\AA^1_{\ZZ}$.) 
The exceptional divisor $E_x$ is cut by $Y=X$.
By substituting $Z$ in the equations (\ref{set-up tangent}), we obtain equations for the proper transforms of the nine sections:
\begin{align*}
x &:\quad Y=X,\\
1 &: \quad X(1-v)+Yv=0,\\
2 &:\quad Y=0,\\
3 &:\quad Y(1-2v)+2X(v-1)=0,\\
4 &:\quad X=0,\\
6 &:\quad  Y=-X,\\
9 &:\quad  Y(2v-3)-2Xv=0,\\
u &:\quad Yv+3X(1-v)=0,\\
v &:\quad Y(4v^2-6v+3)+4X(v-v^2)=0.
\end{align*}

\Trick{The class of $G$.} By passing to characteristic $3$ in (\ref{local coord on S'}), it follows that $G$ is contained in the boundary components $\de_{1v}$, $\de_{2u}$, $\de_{9x}$ and as a a curve in $\M_{0,6}$ is described by:
\begin{align*}
x=9 &:\quad Y=1,\\
1=v &: \quad Y=\frac{v-1}{v},\\
2=u &:\quad Y=0,\\
3 &:\quad  Y=\frac{v-1}{v+1},\\
4 &:\quad Y=\infty,\\
6 &:\quad  Y=-1
\end{align*}

The class of $G$ in $\M_{0,9}$ can be computed to be: 
$$G=\De_{2,4,6,u}+\De_{1,2,3,u,v}+\De_{1,6,v}+\De_{1,4,v}+\De_{3,4}+\De_{3,6}
-2\De_{1,v}-\De_{2,u}-\De_{9,x}.$$


\Trick{Class of $E_1$ (see Fig.~\ref{pic_i}).\label{chart a}}
In the notations of (\ref{local coord on S'}), we blow-up $\PP^1_{\ZZ}$  along $3=0, v=0$. In local coordinates,  we have $v=3a$, with exceptional divisor $E_1: 3=0$ and new coordinate $a$.  The proper transforms of the nine sections have equations:
\begin{align*}
x &:\quad Y=X,\\
1 &: \quad X(1-3a)+3Ya=0,\\
2 &:\quad Y=0,\\
3 &:\quad Y(1-6a)+2X(3a-1)=0,\\
4 &:\quad X=0,\\
6 &:\quad  Y=-X,\\
9 &:\quad  Y(2a-1)-2Xa=0,\\
u &:\quad Ya+X(1-3a)=0,\\
v &:\quad Y(12a^2-6a+1)+4X(a-3a^2)=0.
\end{align*}
\begin{figure}[htbp]
\includegraphics[width=4in]{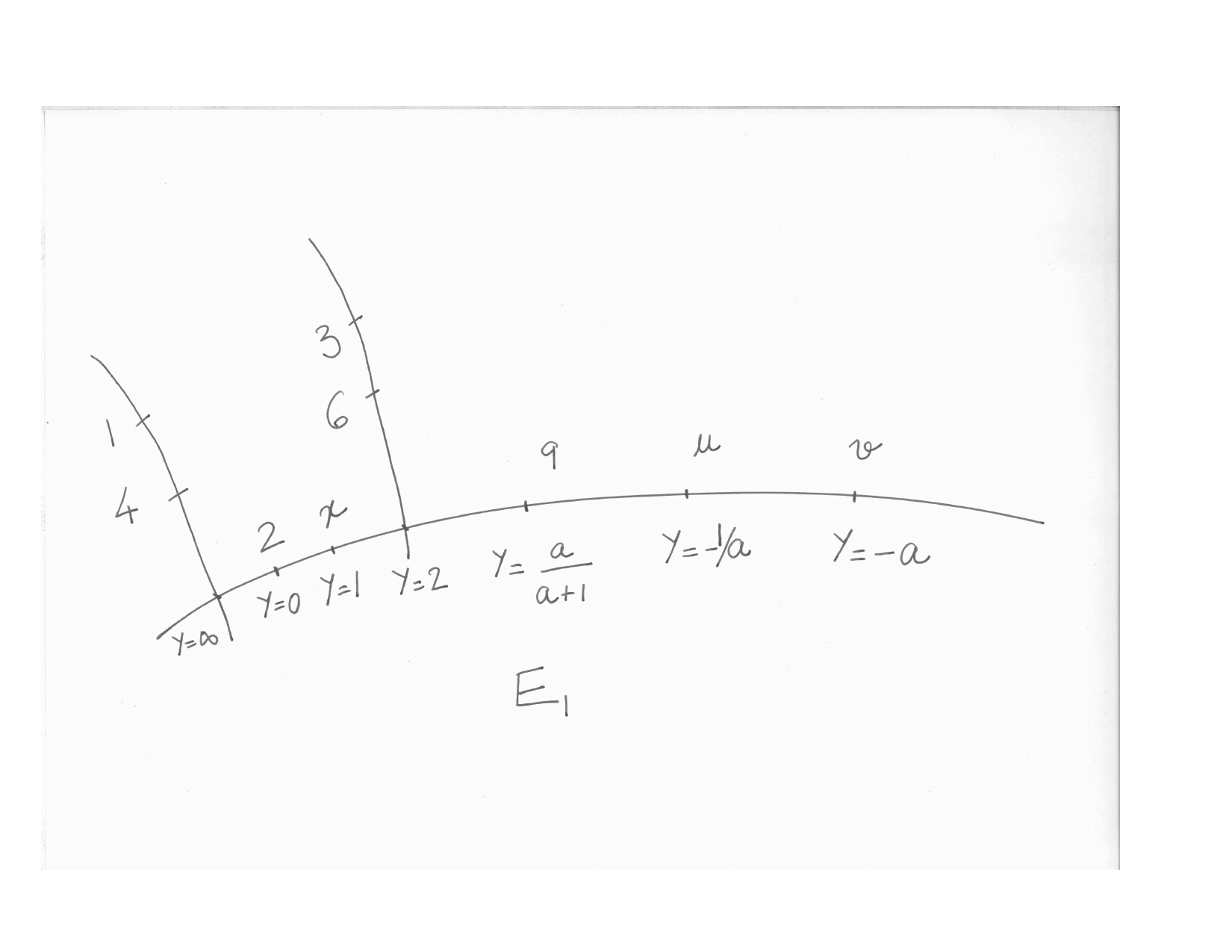}
\caption{The component $E_1$ of the characteristic $3$ fiber.}\label{pic_i}
\end{figure}

By passing to characteristic $3$ in the above equations, we obtain that $E_1$ is contained in the boundary components $\de_{1,4}$ and $\de_{3,6}$ and thus comes from a  curve in $\MM_{0,7}$ (thus a sum of $F$-curves by Cor. \ref{F-conj}). As a curve in $\M_{0,9}$, we have:
\begin{align*}
E_1&=\De_{3,6,9,u,v}+\De_{1,4,9}+\De_{1,4,u}+\De_{1,4,v}+\De_{2,9,v}+\\
&\De_{u,v,x}+\De_{2,u}+\De_{9,u}+\De_{9,x}-3\De_{1,4}-\De_{3,6}.
\end{align*}

\Trick{The class of $E_2$ (see Fig.~\ref{pic_j}).} We will blow-up the intersection point of $G$ and $E_1$. For this it is necessary to look in the other chart of the first blow-up, given by $3=vs$ (with $s=\frac{1}{a}$ the new coordinate on $E_1$). In this chart  we have $E_1: v=0$, $G: s=0$. The proper transforms of the nine sections have equations:
\begin{align*}
x &:\quad Y=X,\\
1 &: \quad X(1-v)+Yv=0,\\
2 &:\quad Y=0,\\
3 &:\quad Y(1-2v)+2X(v-1)=0,\\
4 &:\quad X=0,\\
6 &:\quad  Y=-X,\\
9 &:\quad  Y(2-s)-2X=0,\\
u &:\quad Y+Xs(1-v)=0,\\
v &:\quad Y(4v-6+s)+4X(1-v)=0.
\end{align*}

We blow-up $\PP^1_{\ZZ}$ at $v=s=0$. In local coordinates, we have $v=sw$, with exceptional divisor $E_2: s=0$ and new coordinate $w$ (and thus $3=vs=s^2w$). 
The proper transforms of the nine sections have equations:
\begin{align*}
x &:\quad Y=X,\\
1 &: \quad X(1-sw)+Ysw=0,\\
2 &:\quad Y=0,\\
3 &:\quad Y(1-2sw)+2X(sw-1)=0,\\
4 &:\quad X=0,\\
6 &:\quad  Y=-X,\\
9 &:\quad  Y(2-s)-2X=0,\\
u &:\quad Y+Xs(1-sw)=0,\\
v &:\quad Y(4sw-6+s)+4X(1-sw)=0.
\end{align*}

Along $E_2 (s=0)$ the sections become:
\begin{align*}
x=9 &:\quad Y=X,\\
1=4=v &: \quad X=0,\\
2=u &:\quad Y=0,\\
3=6 &:\quad Y=-X.
\end{align*}
\begin{figure}[htbp]
\includegraphics[width=4in]{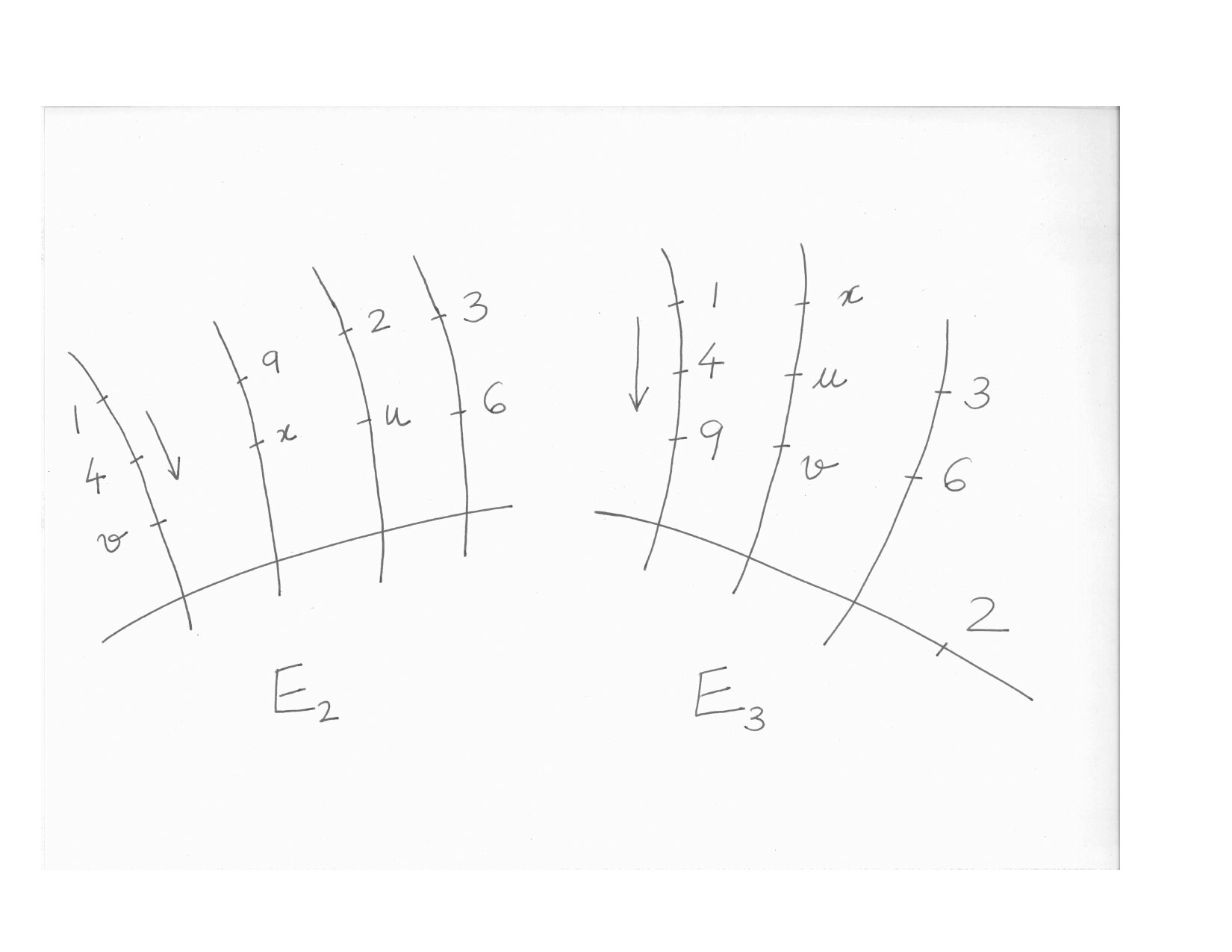}
\caption{The components $E_2$ and $E_3$ of the characteristic $3$ fiber.}\label{pic_j}
\end{figure}

Blowing up the total space along the locus $1=4=v$ ($s=X=0$), we obtain an arithmetic threefold with local equation $X=sX_1$, exceptional divisor $s=0$ and new coordinate $X_1$. The proper transforms of the sections $1, 4, v$ are given by:
\begin{align*}
1 &: \quad X_1(1-sw)+Yw=0,\\
4 &:\quad X_1=0,\\
v &:\quad Y(4w-2sw+1)+4X_1(1-sw)=0.
\end{align*}

Along $E_2$ ($s=0$) these sections become:
\begin{align*}
1 &: \quad X_1+Yw=0,\\
4 &:\quad X_1=0,\\
v &:\quad Y(w+1)+X_1=0.
\end{align*}

The ``attaching section" is cut by $Y=0$. It follows that $E_2$ has class an $F$-curve:
$$E_2=-\De_{1,4,v}+\De_{1,4}+\De_{1,v}+\De_{4,v}.$$

\Trick{The class of $E_3$ (see Fig.~\ref{pic_j}).} In the notations of (\ref{chart a}), we blow up the point $a=-1$ on $E_1$. In local coordinates, we have $a=3b-1$, with exceptional divisor $E_3: 3=0$ and  new coordinate $b$. The proper trasnforms of the nine sections have equations:
\begin{align*}
x &:\quad Y=X,\\
1 &: \quad X(4-9b)+3Y(3b-1)=0,\\
2 &:\quad Y=0,\\
3 &:\quad Y(7-18a)+2X(9b-4)=0,\\
4 &:\quad X=0,\\
6 &:\quad  Y=-X,\\
9 &:\quad  Y(6b-3)-2X(3b-1)=0,\\
u &:\quad Y(3b-1)+X(4-9b)=0,\\
v &:\quad Y\big(12(3b-1)^2-6(3b-1)+1\big)+4X(3b-1)(4-9b)=0.
\end{align*}

Along $E_3$ ($3=0$) the sections become:
\begin{align*}
x=u=v &:\quad Y=X,\\
1=4=9 &: \quad X=0,\\
2 &:\quad Y=0,\\
3=6 &:\quad Y=-X.
\end{align*}

We blow up the total space along the locus $1=4=9$ ($3=X=0$). The new arithmetic threefold is locally cut by $X=3X_1$, with exceptional divisor $3=0$ and  new coordinate $X_1$. The proper transforms of the sections $1, 4, 9$ are given by:
\begin{align*}
1 &: \quad X_1(4-9b)+Y(3b-1)=0,\\
4 &:\quad X_1=0,\\
9 &:\quad  Y(2b-1)-2X_1(3b-1)=0.
\end{align*}
The ``attaching section" is $Y=0$. Along $E_3$ ($3=0$) the sections become:
\begin{align*}
1 &: \quad Y=X_1,\\
4 &:\quad X_1=0,\\
9 &:\quad  Y(b+1)+X_1=0.
\end{align*}

It follows that $E_3$ has the same class as an $F$-curve:
$$E_3=-\De_{1,4,9}+\De_{1,4}+\De_{1,9}+\De_{4,9}.$$







\end{document}